\documentclass[11pt]{amsart}
\usepackage{amsfonts,amsmath,latexsym,amssymb,verbatim,amsbsy,amsthm,amssymb}
\usepackage{array}
\usepackage{graphicx}
\usepackage{caption}
\usepackage{float}
\usepackage{wrapfig}
\usepackage[shortlabels]{enumitem}
\usepackage{comment}
\usepackage{xfrac}
\usepackage{svg}  
\usepackage{graphicx}  


\usepackage[top=1in, bottom=1in, left=1in, right=1in]{geometry}

\usepackage[dvipsnames]{xcolor}

\usepackage[colorlinks=true, pdfstartview=FitV, linkcolor=RoyalBlue,citecolor=ForestGreen, urlcolor=blue]{hyperref}



\theoremstyle{plain}
\newtheorem{THEOREM}{Theorem}[section]

\newtheorem{theorem}[THEOREM]{Theorem}
\newtheorem{corollary}[THEOREM]{Corollary}
\newtheorem{lemma}[THEOREM]{Lemma}
\newtheorem{proposition}[THEOREM]{Proposition}

\theoremstyle{definition}

\theoremstyle{remark}

\newtheorem{remark}[THEOREM]{Remark}



 %
\DeclareMathOperator{\supp}{supp} %
 %
 %
 %
 %
 %
 %
 %
 %
 %
 %
 %
 %
 %


\def \a {\alpha}

\def \e {\varepsilon}

\def \l {\lambda}



\def \cR {\mathcal{R}}

\def \hh {\mathsf{h}}

\usepackage{amsmath}



\newcommand{\R}{\ensuremath{\mathbb{R}}}   


\newcommand{\pa}{\partial}


\def \sign {\mathrm{sgn}}


\def \p {\partial}


\makeatletter
\def\th@remark{%
  \thm@headfont{\bfseries}
  \normalfont
}
\makeatother

\author{Ángel Castro and Daniel Lear}
\title[Linearized 2D Euler: Non-Radial Dynamics]{Time-Periodic Non-Radial Solutions Near  Monotone \\ Vortices in Linearized 2D Euler}

\address{Instituto de Ciencias Matematicas, Madrid.}
\email{angel\_castro@icmat.es}

\address{Universidad de Cantabria, Santander.}
\email{daniel.lear@unican.es}

\date{\today}

\subjclass{76E05,76B03,35Q31,35Q35}

\keywords{2D Euler, hydrodynamic stability, vortices, algebraic vortex, rotating solutions.}

\begin{document}

\maketitle

\begin{abstract}
We study the linearized 2D Euler equations around radial vortex profiles. Previous works  have shown that the strict monotonicity of the vorticity profile leads to axisymmetrization and inviscid damping of non-radial perturbations. 

Given any strictly decreasing radial vortex, we construct arbitrarily close (in low H\"{o}lder  norms $C^\alpha$, with $0<\alpha <  1$) radial profiles that are merely non-increasing, for which non-radial, time-periodic solutions to the linearized equation exist. This shows that both axisymmetrization and inviscid damping are not robust under small, low-regularity perturbations of the background profile that violate strict monotonicity. 
\end{abstract}

\tableofcontents


\section{Introduction and main result}

In this manuscript, we consider the 2D Euler equation for an incompressible and inviscid fluid, expressed in vorticity form:
\begin{align}
\partial_t \omega + (\mathbf{u} \cdot \nabla)\omega &= 0, \label{eq:2deuler}\\
\nabla\cdot\mathbf{u} & = 0, \nonumber
\end{align}
in the full domain $\mathbb{R}^2$,  where the velocity field is recovered through the scalar vorticity by 
\begin{equation}\label{def:u}
\mathbf{u} = \nabla^\perp \Delta^{-1} \omega,
\end{equation}
with $\nabla^\perp = (-\partial_2, \partial_1)$ and
\begin{equation}\label{eq:invLaplacian}
\Delta^{-1}\omega(\mathbf{x}) = \frac{1}{2\pi} \int_{\mathbb{R}^2} \log(|\mathbf{x} - \mathbf{y}|)\,\omega(\mathbf{y})\,d\mathbf{y}.
\end{equation}

The main objective of this paper is to study the dynamics around radially symmetric vortex profiles that are monotone decreasing. To this end, we  consider linearized 2D Euler equation around a radial vortex $\varpi(r)$, which takes the form
\begin{align}\label{linear}
    \partial_tw +u_r[w]\pa_r\varpi+\frac{u_\theta[\varpi]}{r}\pa_\theta w=0,
\end{align}
where we use polar coordinates $(r,\theta)$ and where $u_r[f]$ and $u_\theta[f]$ are defined by
\begin{align*}
    u_r[f](r,\theta):= \frac{\mathbf{e}_r}{2\pi}\cdot \int_0^\infty\int_{-\pi}^\pi \frac{(r\cos(\theta)-s\cos(\beta),\, r\sin(\theta)-s\cos(\beta))^\perp}{r^2+s^2+2rs\cos(\theta-\beta)} f(s,\beta) sd\theta ds,\\
    u_\theta[f](r,\theta):= \frac{\mathbf{e}_\theta}{2\pi}\cdot \int_0^\infty\int_{-\pi}^\pi \frac{(r\cos(\theta)-s\cos(\beta),\, r\sin(\theta)-s\cos(\beta))^\perp}{r^2+s^2+2rs\cos(\theta-\beta)} f(s,\beta) sd\theta ds.
\end{align*}

In \cite{Bedrossian2017}, Bedrossian, Coti-Zelati and Vicol established that strict monotonicity (up to at zero and infinity) of the vorticity profile $\varpi$ leads to  inviscid damping and axisymmetrization. In this work, we show that strict monotonicity is essential. Specifically, we exhibit a strictly monotone radial profile $\varpi(r)$ for which, arbitrarily close to it in $C^{1^-}$, there exists another radial profile $\overline{\varpi}(r)$ that is monotone but not strictly monotone, and for which the linearized equation \eqref{linear} admits non-radial, time-periodic solutions with $m$-fold symmetry for any $m\ge 2$. \\

In this paper, we  work with the profile $\varpi(r) = (1+r^2)^{-1}$. This choice is made for simplicity; however, much more general radial profiles could also be considered, such as $(1+r^2)^{-k}$ with $k=2,3,\dots$, or  $e^{-r^2}$.

\subsection{Motivation and background}\label{motivation}

The stability of vortices is one of the most fundamental problems in the theory of hydrodynamic stability and has been considered by many authors, starting with Kelvin~\cite{Kelvin} and Orr~\cite{Orr}, and continuing to the present day in both mathematics and physics. 

Over the years, a combination of experimental observations, computer simulations, and formal asymptotics (see e.g.\ \cite{BassomGilbert,BassomGilbert1,BPPSV,BMPS,W1,W2} and references therein) suggests that a vortex subjected to a sufficiently small disturbance might return to radial symmetry as $t \to \infty$ in a weak sense. This weak convergence is referred to as \emph{vortex axisymmetrization}, and is thought to be relevant to understanding coherent vortices in 2D turbulence \cite{GallayWayne2005}, atmospheric dynamics \cite{Pedlosky1987,Vallis2006}, and various settings in plasma physics \cite{HurstDanielsonSurko2018,SchecterDubinFine2002}. 

From a theoretical perspective, Arnold's variational principle provides a foundational framework for studying the stability of vortices. The  treatment in \cite{Arnold} develops the general variational framework and establishes sufficient conditions for Lyapunov stability based on monotonicity properties of the vorticity, while the more recent work of Gallay and {\v S}ver{\'a}k \cite{GallaySverak2024} applies these ideas to specific  vortex configurations, illustrating Lyapunov stability, for example, in the cases of the Gaussian vortex and the algebraic vortex. 
Nevertheless, the precise dynamics near these steady states remain subtle and largely open.

In this regard, we also highlight the work of Wan and Pulvirenti \cite{WP}, who established the Lyapunov stability of the characteristic function of a circle in 
$L^1$ for circular bounded domains; see also \cite{SV} for results concerning the whole space. This result has been further generalized for monotone smooth vortices in stronger norms by Choi and Lim, see \cite{ChoiLim}. 

Early studies of vortex dynamics focused on point vortex, which are $\delta$-functions centered at points in $\mathbb{R}^2$. Such solutions provide models of vorticity sharply concentrated in small neighborhoods, and have been studied by many authors, including Kirchhoff~\cite{Kirchhoff} and C.C. Lin~\cite{Lin} and the books \cite{MarchioroPulvirenti,MajdaBertozzi} for more references. Building on this, Ionescu and Jia~\cite{IonescuJiaPointVortex, IonescuJiaSpectral} studied the asymptotic stability of these highly concentrated configurations in the full 2D Euler equation, showing how non‑radial perturbations evolve and ultimately axisymmetrize.

Results for the linear stability of smooth, radially symmetric vortices have recently begun to emerge. In particular, Bedrossian, Coti Zelati, and Vicol \cite{Bedrossian2017} proved that strict monotonicity of the vorticity profile leads to vortex axisymmetrization, inviscid damping, and vorticity depletion for the linearized 2D Euler equation. Their work provides the first general mechanism explaining how non-radial perturbations decay in time around a monotone radial vortex.\\

To understand the significance of our work, let us examine the result of \cite{Bedrossian2017}, which is the main motivation of this paper. There, the authors study a
radial vorticity profile $\varpi(r)$ satisfying
\begin{enumerate}
\item [(V1)] $0\leq \varpi(r)\leq (1+r^2)^{-3}$,
\item [(V2)] $|(r\pa_r)^j \varpi(r)|\leq C_j(1+r^2)^{-3}$, for all $j\geq 0$,
\item [(V3)] (Strict monotonicity) $\pa_r\varpi(r)>0$, $r>0$,
\end{enumerate}
and the equation \eqref{linear} with initial data $$w(r,\theta,0)=\sum_{n\neq 0} w^{\text{in}}_n(r)e^{inx},$$ 
and with the extra assumption
\begin{align}\label{nuetralcondition} \int_{0}^\infty w^{\text{in}}_{\pm 1}(r)r^2dr=0.
\end{align}

Roughly speaking, they are able to prove inviscid damping, meaning that 
\begin{align*}
\|u^r\|_{L^2_\text{weight}}\lesssim (1+t^2)^{-1/2},\qquad \|u^\theta\|_{L^2_{\text{weight}}}\lesssim (1+t^2)^{-1}, 
\end{align*}
where $L^2_\text{weight}$ denotes an $L^2$ space with a suitable radial weight. Their result is in fact much stronger, providing sharper estimates for the decay of the velocity, vortex axisymmetrization, and vorticity depletion near the origin. 
The existence of time-periodic solutions for \eqref{linear} would clearly contradict these conclusions.

Condition \eqref{nuetralcondition} is not merely technical. Indeed,
\begin{align}\label{neutraleq}
    \frac{d}{dt}\int_0^\infty w_{\pm 1}(r,t)\, r^2 \, dr = 0,
\end{align}
which prevents the decay of $\widehat{\mathbf{u}}_{\pm 1}$. 
Identity \eqref{neutraleq} arises from the translational invariance of the 2D Euler equation: 
the translation of a radial function is again a stationary solution, and at the linear level, 
this implies the existence of these neutral modes. To avoid such solutions, we will restrict
our attention to modes with $|n|\geq 2$.\\

Another important result we would like to discuss is  \cite{C-ZZ} (see also \cite{Z}). There,  Coti-Zelati and Zillinger prove stability and inviscid damping for \eqref{linear} under some assumptions on $\varpi(r)$, which allow non monotone profiles. The most important condition (H1 in section 1.3), as stated by the authors, is the following (we translate it into our language):

(H1) There exists $0<\gamma_0<1$ such that
\begin{align*}
    n^2|\pa_y U(y)|-\frac{1}{2}\sign(\pa_y U(y))\pa^3_y U(y)\geq \gamma_0 n^2 |\pa_y U(y)|,\quad \forall y\in \mathbb{R},
\end{align*}
where
\begin{align}U(y):=\left.\frac{u^\theta[\varpi](r,\theta)}{r}\right|_{r=e^y}.\label{U}\end{align}

We will see in Remarks \ref{remCZZ1} and \ref{remCZZ2} that our profiles do not satisfy (H1); thus, the existence of time periodic solutions is compatible with \cite{C-ZZ}.\\

We emphasize that this problem shares many features with the study of the nonlinear stability of shear flows. Significant progress has been achieved in this direction, beginning with the inviscid damping breakthrough of Bedrossian and Masmoudi \cite{BedrossianMasmoudi2020} in $\mathbb{R}\times\mathbb{T}$, and subsequently extended to the finite periodic channel $\mathbb{T}\times[0,1]$ through the independent works of Ionescu and Jia \cite{IJ_CMP,IJ_acta} and Masmoudi and Zhao \cite{MZ}.

Motivated by these developments, the linearized equations around more general shear flows have been intensively investigated in recent years. See, for example, \cite{Bedrossian2017,BMlinear,DZ1,DZ2,GNRS,Jia1,Jia2,Wei-Zhang-Zhao,Wei-Zhang-Zhao_1,Wei-Zhang-Zhao_2,Zillinger1,Zillinger2} and the references therein for a representative, though not exhaustive, list of contributions.

Earlier foundational results on stability were established by Lin and Zeng~\cite{LZ}, who proved that nonlinear inviscid damping fails for perturbations of the Couette flow in $H^s$ when $s<3/2$. They also showed the nonexistence of nontrivial traveling waves close to Couette in $H^s$ for $s>3/2$. More specifically, they identified Kelvin’s cat’s eyes steady states in a neighborhood of the Couette shear flow. In fact, the dynamics at this level of regularity are even more intricate: traveling waves of order $O(1)$ speed and zero circulation also arise, as demonstrated by the authors in \cite{CL1} and in \cite{CL2} for the Taylor--Couette setting.

The question of whether nontrivial invariant structures, such as steady states or 
traveling waves, may exist near the Poiseuille flow has attracted significant attention. For 
the quadratic profile $U_P(y)=y^{2}$ in $\mathbb{T}\times[-1,1]$, Coti Zelati--Elgindi--Widmayer
\cite{ZEW} proved that there are no nontrivial traveling waves (TW) arbitrarily 
close to Poiseuille in $H^{5^+}$, establishing the first high-regularity rigidity result around 
this shear flow. Complementing this, Gui--Xie--Xu~\cite{GXX} showed in their classification 
theorem that any 
traveling wave $\omega$ satisfying 
\[
\|\partial_y \omega +2\|_{L^\infty} < 2
\]
must necessarily be a shear flow, thereby ruling out nontrivial TW within this regime.

A different perspective was recently provided by Drivas--Nualart \cite{DrivasNualart}, who proved 
that Poiseuille is isolated in 
the $C^{1}$ topology from non-shear steady states. These results collectively illustrate a rigidity framework near Poiseuille at high regularity.

In contrast to this rigidity, our previous work \cite{CL3} shows that such phenomena break down at lower regularity. 
More precisely, for  any $\varepsilon>0$, we construct nontrivial Lipschitz 
traveling wave solutions $\omega$ satisfying
\[
\| \omega + 2y \|_{H^{3/2^-}} < \varepsilon.
\]
These solutions also satisfy
\[
\| \omega + 2y \|_{C^{1^-}} < \varepsilon.
\]
Moreover, our construction saturates the Gui--Xie--Xu criterion since the solutions satisfies
\[
\|\pa_y\omega+2\|_{L^\infty}=2.
\]
 This optimality was also established in \cite{DrivasNualart}, where the authors additionally proved the existence of smooth stationary states near the Poiseuille flow in the $C^{1^-}$ topology. In fact, Drivas and Nualart go further and demonstrate the existence of smooth stationary states near the shear flow $y^{k}$ in the $C^{(k-1)^-}$ topology (at the vorticity level).\\

Remarkably, the existence of rotating solutions for the (full) 2D Euler equations near strictly monotone vortices, in the $L^\infty$ topology, can be recovered from Theorem 1.1 in the work of García, Hmidi, and Mateu \cite{GMH} (see also \cite{GHS}). Indeed, if one takes, for example, $\varpi_R(r) = (1+r^{2})^{-1}\chi_{[0,R]}(r)$ (which corresponds to $f_0(r)\mathbf{1}_{[0,R]}(r)$ in the notation of \cite{GMH}), then Theorem 1.1(ii) (abundance case) can be applied. To verify this, one simply replaces $\varpi_R$ by $-\varpi_R$ and observes that, although the authors work with $R=1$, an analogous result remains valid for any $R>0$. Consequently, the existence of rotating solutions bifurcating from $\varpi_R$ follows. In addition,
 \begin{align*}
 \| \varpi_R-(1+r^2)^{-1}\|_{L^\infty}=(1+R^2)^{-1},
 \end{align*}
and $\varpi_R$ is as close as we want in $L^\infty$ to $(1+r^2)^{-1}$ by making $R$ large enough. However, this distance, in smoother spaces, such as for example $C^\alpha$, is infinite.\\

Finally, we remark that our strategy to prove the main result of this paper comes from the construction of our solutions in \cite{CL3}.

\subsection{Statement of the main result}\label{s:maineq}

Our goal is to find time-periodic solutions for \eqref{linear} of the form $w(r,\theta,t):=W(r,\theta-\lambda t)$. The previous ansantz give us
\begin{align}\label{Weq}
    -\lambda\pa_\theta W +u_r[W]\pa_r \varpi+\frac{u_\theta[\varpi]}{r}\pa_\theta W=0, \quad (r,\theta)\in [0,\infty)\times \mathbb{T}.
\end{align}
Now, we  take  
\begin{align}\label{coseno}
W(r,\theta):=W_n(r)\cos(n\theta),
\end{align}
with $n\geq 2$, by the parity of the cosine. 

Substituting \eqref{coseno} into \eqref{Weq} yields
\begin{align}\label{Wn}
(\lambda+c(r))W_n(r)-\frac{1}{n}\int_0^\infty K_n\left(\frac{r}{s}\right)W_n(s)ds=0, \quad r\in[0,\infty), 
\end{align}
with 
\begin{equation}\label{def:Kn}
K_n(r):=\frac{1}{2r}\left\{
\begin{aligned}
r^{-n} \qquad &r>1,\\
r^{+n} \qquad &r\leq 1.
\end{aligned}
\right.
\end{equation}
and 
\begin{equation}\label{def:c(rho)}
    c(r):=\int_0^{\infty}\varpi'(s)K_1\left(\frac{r}{s}\right)  ds=-\frac{u_\theta(r)}{r}.
\end{equation}

The computations required to go from \eqref{Weq} to \eqref{Wn} follow steps similar to those
in Section 3 of \cite{CFMS1}. For the sake of completeness, we provide all the details in the
appendix.

To solve \eqref{Wn}, we next introduce the ansatz 
\begin{align}\label{Wnansatz}
    W_n(r):=h_n(r)\partial_r\varpi(r)
\end{align}
in \eqref{Wn}, which yields the following equation for $h_n$:
\begin{align*}
    \left((\lambda+c(r))h_n(r)-\frac{1}{n}\int_0^\infty \pa_s\varpi(s)K_n\left(\frac{r}{s}\right)h_n(s)ds\right)\pa_r\varpi(r)=0.
\end{align*}
Therefore, it will be sufficient to solve
 \begin{align}\label{hneq}
    (\lambda+c(r))h_n(r)-\frac{1}{n}\int_0^\infty \pa_s\varpi(s)K_n\left(\frac{r}{s}\right)h_n(s)ds=0, \quad r\in \text{supp}(\pa_r\varpi),
\end{align}
with a function $h_n: \supp(\pa_r\varpi)\to  \mathbb{R}$.\\

The main result of the paper is the following theorem:

\begin{theorem}\label{mainthm} Fix $n\in\mathbb{N}$ large enough and $0<\alpha<1$. For all $\e>0$, there exist $\varpi_\e\in W^{1,\infty}$, $h^\e_n\in C^\infty(\emph{supp}(\pa_r \varpi))$ and $\lambda^\e\in \mathbb{R}$ satisfying \eqref{hneq} and such that
\begin{align*}
    \|(1+r^2)^{-1}-\varpi_\e\|_{C^\alpha}\leq \e.
\end{align*}
\end{theorem}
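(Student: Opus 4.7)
The plan is to construct $\varpi_\e$ by grafting a small flat plateau into the profile, and then to solve \eqref{hneq} on the resulting (disconnected) support by placing the critical layer inside the plateau. Concretely, fix $r_*>0$ and $L_\e>0$ and choose a non-decreasing Lipschitz map $\phi_\e:[0,\infty)\to[0,\infty)$ that equals the identity on $[0,r_*]$, is constantly $r_*$ on $[r_*, r_*+L_\e]$, and equals $r\mapsto r-L_\e$ on $[r_*+L_\e,\infty)$. Setting $\varpi_\e := \varpi_0\circ\phi_\e$ with $\varpi_0(r)=(1+r^2)^{-1}$ yields $\varpi_\e\in W^{1,\infty}$: it is non-increasing, strictly decreasing off the plateau, and $S_\e:=\mathrm{supp}(\partial_r\varpi_\e)=[0,r_*]\cup[r_*+L_\e,\infty)$. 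Since $\|\varpi_\e-\varpi_0\|_{L^\infty}\lesssim L_\e$ while $\|\partial_r(\varpi_\e-\varpi_0)\|_{L^\infty}=O(1)$, interpolation gives $\|\varpi_\e-\varpi_0\|_{C^\alpha}\lesssim L_\e^{1-\alpha}$, which is $\leq\e$ provided $L_\e$ is small enough.

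With this profile, we seek $(\lambda^\e,h_n^\e)$ satisfying \eqref{hneq} on $S_\e$. Because $\partial_s\varpi_\e$ vanishes off $S_\e$, the integral in \eqref{hneq} only involves values of $h_n^\e$ on $S_\e$; the equation is thus closed on this set. The critical-layer idea is to pick $\lambda^\e$ so that $\lambda^\e+c_\e$ vanishes at some $r_{**}\in(r_*,r_*+L_\e)$ strictly inside the plateau. Monotonicity of $c_\e$ (inherited from $\varpi_\e$) then ensures that $\lambda^\e+c_\e$ is bounded away from zero on $S_\e$, with opposite signs on the two components, so we may recast \eqref{hneq} as the fixed-point problem
\begin{equation*}
h(r)\;=\;(\mathcal{T}^{n,\e}_{\lambda} h)(r)\;:=\;\frac{1}{n\bigl(\lambda+c_\e(r)\bigr)}\int_{S_\e}\partial_s\varpi_\e(s)\,K_n\!\left(\tfrac{r}{s}\right)h(s)\,ds,\qquad r\in S_\e.
\end{equation*}

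Following the strategy of \cite{CL3}, the next step is to treat $\lambda$ as a bifurcation parameter and locate a value for which the leading eigenvalue of $\mathcal{T}^{n,\e}_\lambda$ crosses $1$. The operator is compact on $L^\infty(S_\e)$ (it is smoothing, since $K_n\in C^0$), depends continuously on $\lambda$, and its spectral radius tends to zero as $|\lambda|\to\infty$. Decomposing $S_\e=S_\e^-\sqcup S_\e^+$ with $S_\e^-=[0,r_*]$ and $S_\e^+=[r_*+L_\e,\infty)$, $\mathcal{T}^{n,\e}_\lambda$ splits as a $2\times 2$ block. For $n$ large, the definition \eqref{def:Kn} shows that $K_n(r/s)$ decays like $(\min(r,s)/\max(r,s))^n$ when $r$ and $s$ lie on opposite sides of the plateau (so their ratio stays bounded away from $1$), hence the off-diagonal blocks are exponentially small in $n$. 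Each diagonal block, after the rescaling $s=r(1+y/n)$, reduces at leading order to an explicit multiplication-type operator whose spectrum can be read off. We then tune $\lambda$ so that the leading eigenvalue of this block-diagonal approximation equals $1$ and upgrade it to an exact eigenvalue of $\mathcal{T}^{n,\e}_{\lambda^\e}$ via an implicit-function / perturbation argument. Smoothness $h_n^\e\in C^\infty(S_\e)$ then follows by a standard bootstrap from the integral equation, using that $K_n$ is smooth off the diagonal and has only a jump in its first derivative at $r=s$, and that $\lambda^\e+c_\e$ stays away from zero on $S_\e$.

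The principal difficulty lies in this last bifurcation step: producing the eigenvalue crossing requires $n$-uniform control of $\mathcal{T}^{n,\e}_\lambda$ as $\lambda$ approaches the critical layer and as $L_\e\to 0$, together with enough quantitative information to guarantee that the eigenfunction so obtained is genuinely nontrivial on both components of $S_\e$ and is not a spurious artefact of the leading-order asymptotics. Arranging all of this uniformly in $\e$ is what forces us to take $n$ sufficiently large.
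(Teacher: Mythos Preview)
Your plateau construction and the idea of placing the critical layer inside it are exactly right, and match the paper. The gap is in the spectral/perturbative step.

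You claim that for large $n$ the off-diagonal blocks of $\mathcal{T}^{n,\e}_\lambda$ are exponentially small because ``$r$ and $s$ lie on opposite sides of the plateau, so their ratio stays bounded away from $1$.'' This fails uniformly in $\e$: when $r$ is near $r_*$ and $s$ is near $r_*+L_\e$, the ratio $r/s$ is $1+O(L_\e)$, so $K_n(r/s)\to \tfrac12$ as $L_\e\to 0$ regardless of $n$. Worse, the prefactor $(\lambda+c_\e(r))^{-1}$ is of order $L_\e^{-1}$ near the plateau edges (the zero of $\lambda+c_\e$ sits at distance $\sim L_\e$ from $S_\e^\pm$). So neither the off-diagonal blocks nor the operator itself are uniformly controlled as $L_\e\to 0$, and the block-diagonal-plus-small-perturbation picture collapses. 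Since $n$ is fixed before $\e$ in the statement, you cannot rescue this by letting $n$ depend on $\e$.

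The paper sidesteps this entirely by a different mechanism: it differentiates the integral equation twice (Section~\ref{s:InteqtoODE}), which, because $K_n(r/s)$ is piecewise $r^{\pm n}$, converts \eqref{hneq} into a \emph{local} second-order ODE on each component of $S_\e$. These ODEs are genuinely decoupled; the coupling between left and right reappears only as a pair of scalar matching conditions at the plateau edge, encoded in a $2\times 2$ determinant \eqref{eq:det}. The singularity you would see in $\mathcal{T}^{n,\e}_\lambda$ becomes a regular-singular point of the ODE at $x=1$, handled by Frobenius analysis. The determinant equation then forces a delicate ansatz $\lambda=\lambda_0+\e\lambda_1+\e^2\log^2(\e)\lambda_2(\e)$: the integrals $I_1,I_2$ are each of size $\log(\e^{-1})$, and one must choose $\lambda_1$ so that these logarithmic divergences cancel exactly in $1+I_1+I_2$ (equation \eqref{eq:l1equation}), after which $\lambda_2$ is found by Brouwer. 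None of this logarithmic structure is visible from your compact-operator viewpoint, and it is precisely what makes the problem nontrivial as $\e\to 0$.
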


\begin{remark}
The reason why we take $n$ large enough is quite technical, and we could actually take $n \geq 2$. See Remark \ref{rem:ngrande} for more details.
\end{remark}

\begin{remark}
The profile $\varpi(r)=(1+r^{2})^{-1}$ is strictly monotone, in the sense that 
$\partial_{r}\varpi(r)>0$ for all $r\in(0,\infty)$, as required in (V3). 
In contrast, the approximating profile $\varpi_{\varepsilon}$ does not retain this
strict monotonicity: by construction, one has 
\[
\partial_{r}\varpi_{\varepsilon}(r)=0 \quad \text{for some interval } 
I\subset(1/2,2).
\]
\end{remark}

\begin{remark}
The profiles $\varpi(r)=(1+r^{2})^{-1}$ and $\varpi_{\varepsilon}(r)$ do not satisfy (V1) 
(although both are positive). Nevertheless, one could equally work with other profiles, such as $e^{-r^2}$, which do satisfy (V1) and (V2), and establish an analog 
of Theorem \ref{mainthm} for them. We simply choose $(1+r^{2})^{-1}$ because it leads to 
simpler computations.
\end{remark}

\begin{remark}
The profile $\varpi(r)$ used in \cite{Bedrossian2017} is smooth (see condition (V2)). 
In our case, while $(1+r^{2})^{-1}$ is smooth, the approximating profile 
$\varpi_{\varepsilon}$ is only in $W^{1,\infty}$. We believe that it should be 
possible to construct time-periodic solutions associated with a $C^k$ profile as 
well, although at the cost of significantly more involved computations
\end{remark}

\begin{remark}\label{remCZZ1}
In our case, $\pa_y^3 U(y)$, with $U(y)$ as in \eqref{U}, involves $\pa_r^2\varpi_\e$ and
$$\pa^2_r\varpi_\e(r)=a\delta_{r_1}(r)-b\delta_{r_2}(r)+ \text{low order terms},$$
where $a, b, r_1, r_2 >0$ and $\pa_y U(\log(r_1))$, $\pa_y U(\log(r_2))$ have the same sign. This fact makes it impossible for (H1) to hold. We provide more details in Remark \ref{remCZZ2}.

\end{remark}

\section{Analyzing  equation \eqref{hneq}}

In what follows, our goal is to solve
\begin{equation}\label{eq:kernel}
(\lambda + c(r))f(r) -\frac{1}{n}\int_0^{\infty}\varpi'(s)K_n\left(\frac{r}{s}\right) f(s) ds=0,
\end{equation}
over the support of $\varpi'$, where  $c$ is given by \eqref{def:c(rho)}  and $K_n$ by \eqref{def:Kn}.

\begin{remark}
For the particular case $n=1$, we have a trivial solution given by 
\[
(n,\lambda,f)=(1,0,\text{cte}).
\]    
As discussed above, this solutions corresponds, at the nonlinear level, to a translation of the vortex.  
\end{remark}

\subsection{Boundary conditions induced by the equation}
From definition \eqref{def:Kn}, we have
\[
\lim_{r\to 0^+} r^{2-n}K_n(r)=0, \qquad \lim_{r\to +\infty}r^nK_n(r)=0.
\]
Since we will look for $\lambda\in\mathbb{R}$ such that $(\lambda + c(r))\neq 0$ over the support of $\varpi'$, from \eqref{eq:kernel}, we have 
\begin{equation}\label{bc:fromeq}
\lim_{r\to 0^+}r^{2-n}f(r)=0, \qquad \lim_{r\to +\infty}r^nf(r)=0.
\end{equation}

\subsection{The profile function $\varpi_{\e}$}
Since we are interested in the dynamics near an algebraic vortex, which corresponds to $\varpi(r)=\varpi_0(r)=(1+r^2)^{-1}$, we are going to consider the following perturbed profile $\varpi_\e$ given by the expression:

\begin{equation}\label{def:varpi}
\varpi_\e(r):=\left\{
\begin{aligned}
\varpi_0(r)  + \varpi_0(1+\e/2)-\varpi_0(1-\e/2) \qquad &0< r \leq 1-\e/2,\\
\varpi_0(1+\e/2)  \qquad &1-\e/2< r < 1+\e/2,\\
\varpi_0(r) \qquad &1+\e/2 \leq r< +\infty.
\end{aligned}
\right.
\end{equation}

See \textsc{Figure}~\ref{fig:imagenprofile} for an illustration of their profiles.

\begin{remark}
    Our goal is to find solutions to equation \eqref{eq:kernel} with $\varpi=\varpi_\e$ for all $0<\e<\e_0$. Here and in the rest of the paper $\e_0$ is a small enough number.
\end{remark}

\begin{figure}[h]
    \centering
    \includegraphics[width=0.6\textwidth]{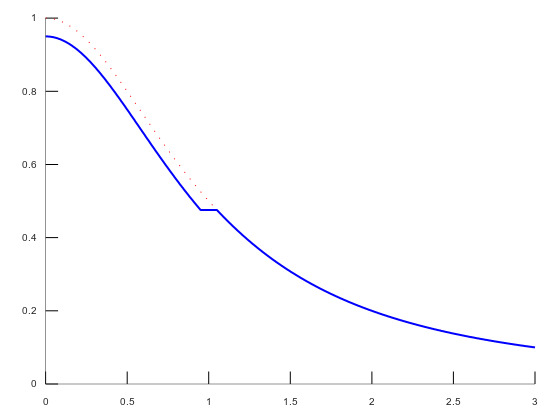}
    \caption{Comparison of $\varpi_0(r)$ and $\varpi_\e(r)$ for $\e=0.1$}
    \label{fig:imagenprofile}
\end{figure}

Importantly, the norm $\|\varpi_0-\varpi_\e\|_{C^\alpha}$ can be explicitly computed. Indeed,
\begin{align*}
    \|\varpi_0-\varpi_\e\|_{C^\alpha}\leq \|\varpi'_0\|_{L^\infty}\e^{1-\alpha}.
\end{align*}

As an immediate consequence of the definition, we obtain 
\begin{equation}\label{def:varpi'}
\varpi_\e'(s)=\varpi_0'(s) \cdot \left\{
\begin{aligned}
1 \qquad &0 <  s \leq 1-\e/2,\\
0 \qquad & 1-\e/2 < s < 1+\e/2,\\
1 \qquad &1+\e/2 \leq s< +\infty,
\end{aligned}
\right.
\end{equation}
such that $\varpi_\e\in W^{1,\infty}$ but not in $C^1$, and
\[
\text{supp} (\varpi_\e')=[0,1-\e/2]\cup [1+\e/2,+\infty).
\]

Next, we try to explain the behavior of the function $c(r)$. Since 
\[
\lim_{z\to 0^+} z^2 \varpi_\e(z)=0, \qquad \lim_{z\to + \infty} \varpi _\e(z)=0,
\]
we get (after applying integration by parts) that
\begin{align*}
    \lambda+c(r)= \lambda - \frac{1}{r^2}\int_0^{r} s \varpi_\e(s)ds,
\end{align*}
where the integral term $\int_0^{r} s \varpi_\e(s)ds$ is given by
\[
\begin{array}{ll}
\displaystyle
\int_0^r s\, \varpi_0(s)\, ds + \frac{r^2}{2} \left(\varpi_0(1+\varepsilon/2) - \varpi_0(1-\varepsilon/2)\right), 
& \quad  r \leq 1 - \varepsilon/2, \\[2ex]

\displaystyle
\int_0^{1-\varepsilon/2} \hspace{-0.6 cm}s\, \varpi_0(s)\, ds + \frac{r^2}{2} \varpi_0(1+\varepsilon/2)
- \frac{(1-\varepsilon/2)^2}{2} \varpi_0(1-\varepsilon/2),
& \hspace{-1.25 cm} r \in(1 - \varepsilon/2, 1 + \varepsilon/2), \\[2ex]

\displaystyle
\int_0^{1-\varepsilon/2} \hspace{-0.6 cm} s\, \varpi_0(s)\, ds + \int_{1+\varepsilon/2}^r \hspace{-0.6 cm} s\, \varpi_0(s)\, ds
+ \frac{(1+\varepsilon/2)^2}{2} \varpi_0(1+\varepsilon/2)
- \frac{(1-\varepsilon/2)^2}{2} \varpi_0(1-\varepsilon/2),
& \quad r \geq 1 + \varepsilon/2.
\end{array}
\]

As will be seen below, we look for $\l^\ast\in\mathbb{R}$ such that $\lambda^\ast+c(r_\ast)=0$ for some $r_\ast\in(1-\e/2,1+\e/2)$. That is,
\begin{equation}\label{def:lambdarhostar}
\lambda^\ast=\frac{1}{r_\ast^2}\int_0^{r_\ast} s \varpi_\e(s)ds, \qquad \text{for $r_\ast\in (1-\e/2,1+\e/2)$.}
\end{equation}

To aid the understanding \eqref{def:lambdarhostar}, we plot in \textsc{Figure} \ref{fig:imagen1} the graph of \( c(r) \) in blue for \( r < 1 - \varepsilon/2 \) and in red for \( r > 1 + \varepsilon/2 \), using a fixed value of \(\varepsilon = 0.01\). Furthermore, by zooming in near \( r = 1 \), we observe in \textsc{Figure} \ref{fig:imagen2} a gap that will allow us to define a value \( \lambda^* \) within the green region of points, corresponding to a unique value \( r^* \in (1 - \varepsilon/2, 1 + \varepsilon/2) \).

\begin{figure}[h]
    \centering
    \begin{minipage}{0.45\textwidth}
        \centering
        \includegraphics[width=\linewidth]{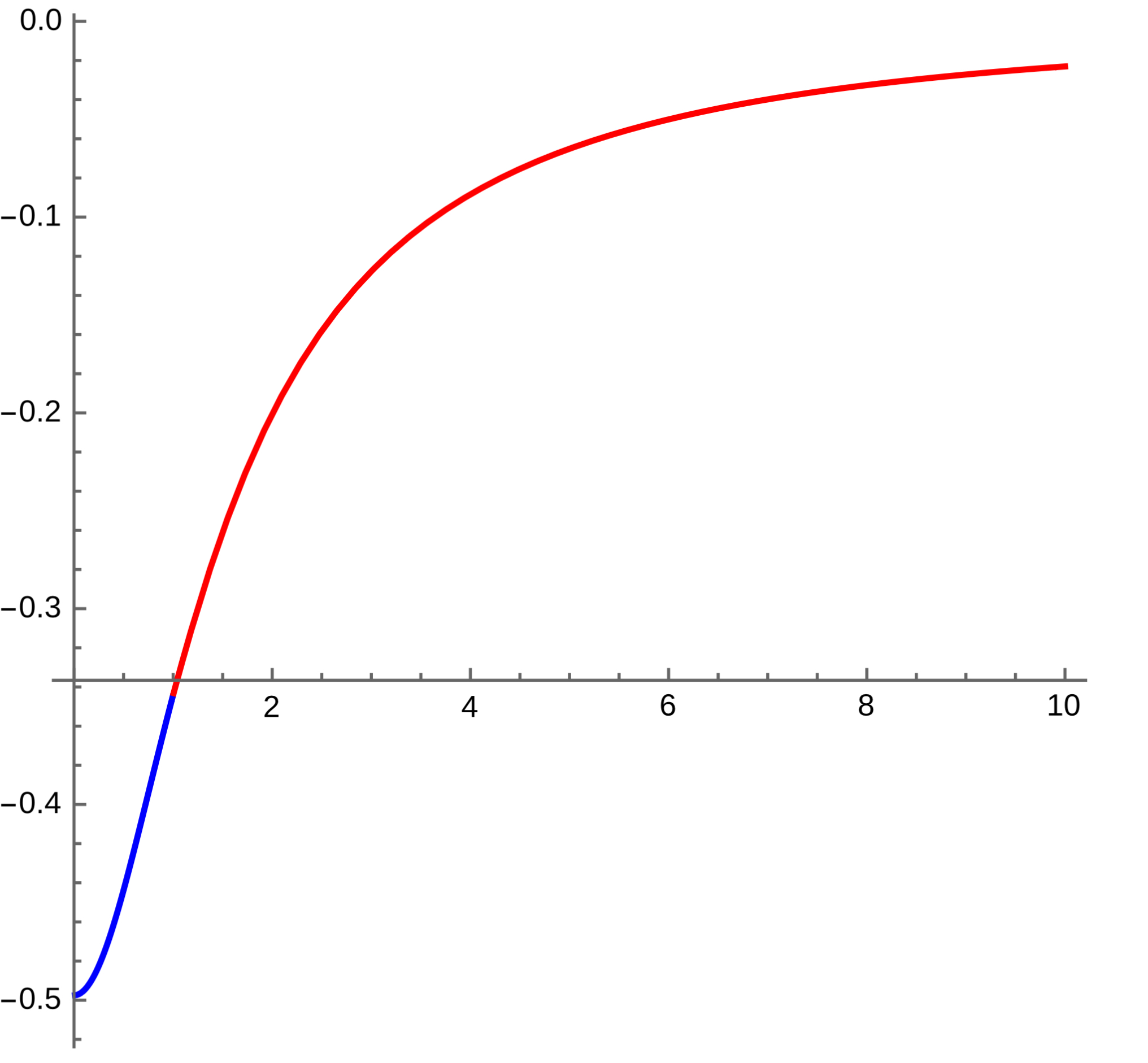}
        \caption{Plot of $c(r)$}
        \label{fig:imagen1}
    \end{minipage}
    \hfill
    \begin{minipage}{0.45\textwidth}
        \centering
        \includegraphics[width=\linewidth]{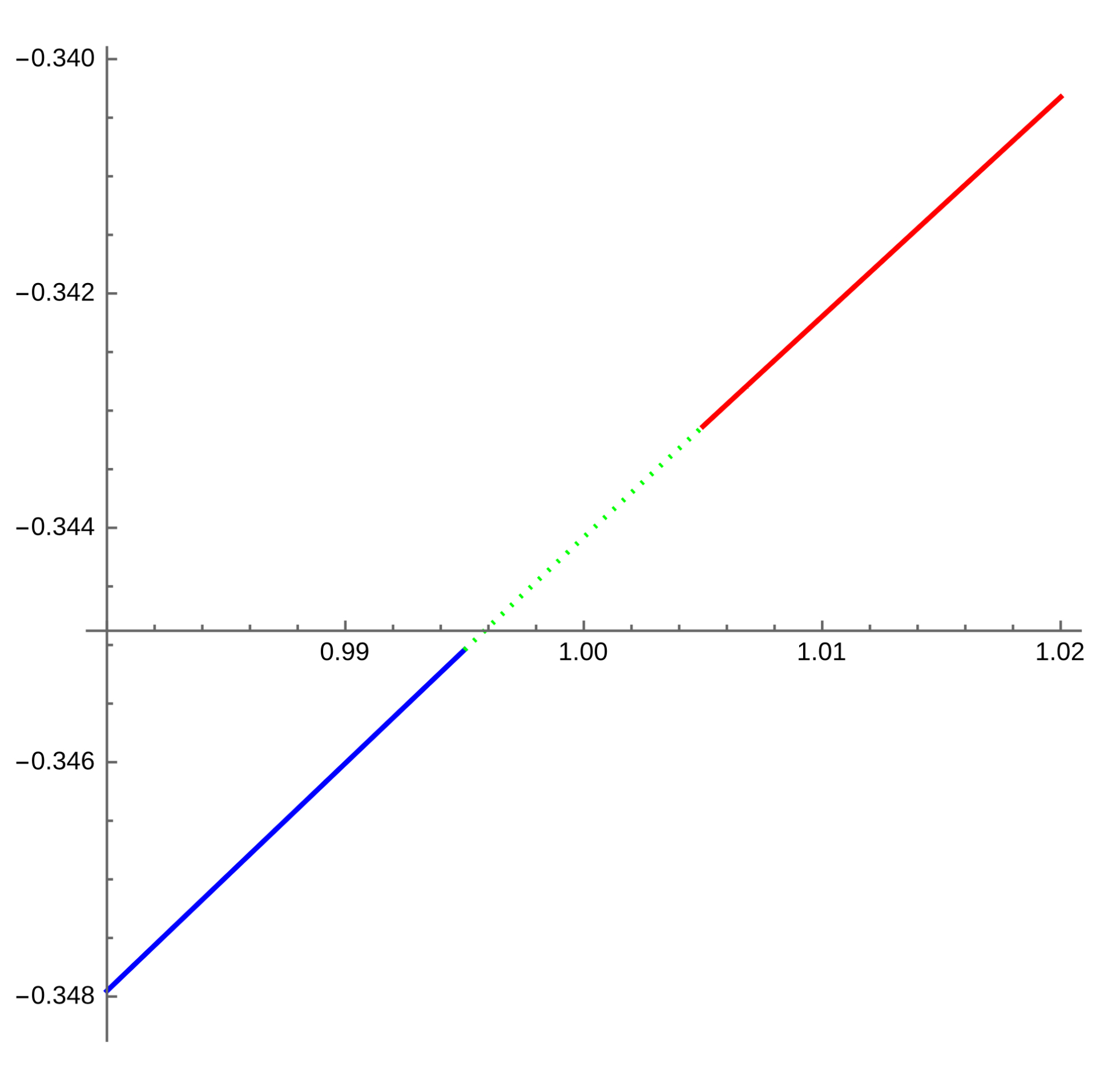}
        \caption{Zoom in $r=1$}
        \label{fig:imagen2}
    \end{minipage}
\end{figure}

From \eqref{def:c(rho)} one can check that
\begin{equation}\label{cond:cinc}
c'(r)=-\frac{1}{r^3}\int_0^r \varpi_\e'(s) s^2 ds.
\end{equation}
From the definition of \( \varpi_\varepsilon \), it follows directly that the function \( c \) is strictly increasing, i.e. 
\begin{equation}\label{c:increasing}
c'(r)>0, \quad  \forall r\in(0,+\infty).    
\end{equation}
Notice that, assuming \eqref{def:lambdarhostar}, the above also implies that 
$\lambda+c(r)\neq 0$ for all $r\in \text{supp} (\varpi_\e')$.

\begin{remark}\label{remCZZ2}
Let us come back to Remark \ref{remCZZ1}. Since $c(r) = -\frac{u_\theta(r)}{r}$ from \eqref{c:increasing}, we obtain that $\partial_r\left( \frac{u_\theta(r)}{r} \right) < 0$. In addition, we can also write
\begin{align*}
\frac{u_\theta(r)}{r}=\frac{1}{r^2}\int_0^r \varpi_\e(s)sds.
\end{align*}
Therefore
\begin{align*}
    \pa_r^3\left(\frac{u_\theta(r)}{r}\right)&=\frac{1}{r}\pa_r^2\varpi_\e(r)+\text{lower order terms}\\&= -\frac{\varpi'_0\left(1-\frac{e}{2}\right)}{1-\frac{e}{2}}\delta_{1-\frac{e}{2}}(r)+\frac{\varpi'_0\left(1+\frac{e}{2}\right)}{1+\frac{\e}{2}}\delta_{1+\frac{\e}{2}}(r)+\text{lower order terms}.
\end{align*}
\end{remark}

\section{Preparations for the study of the linear operator}
In the first place, using \eqref{def:Kn}, we observe that \eqref{eq:kernel} can be written  as
\begin{equation}\label{eq:generalIntEq}
2nr (\lambda + c(r))f(r)-r^{-n}\int_0^{r}\varpi_\e'(s)s^{n+1} f(s)ds -r^{n}\int_{r}^{+\infty}\varpi_\e'(s)s^{1-n}f(s)ds=0.
\end{equation}
Here, it is important to emphasize that we have to solve the above integral equation \eqref{eq:generalIntEq} just over the  support of $\varpi_\e'$. That is, 
\[
\text{supp}(\varpi_\e')=[0,1-\e/2]\cup[1+\e/2,+\infty).
\]
\subsection{The problem as a system of integral equations}
As we are working in a disjoint domain, it will be useful to use the notation
\[
f(r):=\begin{cases}
    f_L(r)\qquad r\in[0,1-\e/2],\\
    f_R(r)\qquad r\in [1+\e/2,+\infty).
\end{cases}
\]

Thus, for $r\in(0,1-\varepsilon/2)$ we have 
\begin{multline}\label{inteq:fL}
2nr (\lambda + c(r))f_L(r)-r^{-n}\int_0^{r}\varpi_\e'(s)s^{n+1} f_L(s)ds -r^{n}\int_{r}^{1-\e/2}\varpi_\e'(s)s^{1-n}f_L(s)ds\\
-r^{n}\int_{1+\e/2}^{+\infty}\varpi_\e'(s)s^{1-n}f_R(s)ds=0,
\end{multline}
and for $r\in (1+\varepsilon/2,+\infty)$ we have
\begin{multline}\label{inteq:fR}
2nr (\lambda + c(r))f_R(r)-r^{-n}\int_0^{1-\e/2}\varpi_\e'(s)s^{n+1} f_L(s)ds \\
-r^{-n}\int_{1+\e/2}^{r}\varpi_\e'(s)s^{n+1} f_R(s)ds -r^{n}\int_{r}^{\infty}\varpi_\e'(s)s^{1-n}f_R(s)ds=0.
\end{multline}

Now, since the integrand domain in the above integrals strongly depends on the parameter $\e$, we perform a change of variables to eliminate this dependence and work with a fixed domain.

\subsubsection{Rescaling}
 Let us rescale equations \eqref{inteq:fL} and \eqref{inteq:fR}. To do so, we consider the following rescaled variable $x$ introduced below, which allows us to fix the domain
\[
x:=\begin{cases}
    \frac{r}{1-\varepsilon/2}, \qquad 0 \leq r \leq 1-\varepsilon/2,\\
    \frac{r}{1+\varepsilon/2}, \qquad 1+\varepsilon/2\leq r <+\infty,
\end{cases}
\]
and the rescale functions
\begin{align*}  
f^\ast_L(x):=f_L(x(1-\varepsilon/2)), \quad &x\in [0,1],\\
f^\ast_R(x):=f_R(x(1+\varepsilon/2)), \quad &x\in[1,+\infty).
\end{align*}

Thus, for $0\leq x \leq 1$, we have to solve
\begin{multline}\label{inteq:fLrescaled}
2n   (\lambda + c_L(x,\e))f_L^\ast(x)-(1-\e/2)\int_0^{x}\varpi_0'(s(1-\e/2)) \left(\frac{s}{x}\right)^{n+1}  f_L^\ast(s)ds\\ 
-(1-\e/2)\int_{x}^{1}\varpi_0'(s(1-\e/2)) \left(\frac{s}{x}\right)^{1-n}f_L^\ast(s)ds\\
-(1+\e/2)\left(\frac{1+\e/2}{1-\e/2}\right)^{1-n}\int_{1}^{+\infty}\varpi_0'(s(1+\e/2))\left(\frac{s}{x}\right)^{1-n}  f_R^\ast(s)ds=0,
\end{multline}
and for $x \geq 1$ we have
\begin{multline}\label{inteq:fRrescaled}
    2n(\l+c_R(x,\e))f_R^\ast(x)-(1-\e/2)\left(\frac{1-\e/2}{1+\e/2}\right)^{n+1} \int_0^1 \varpi_0'(s(1-\e/2))\left(\frac{s}{x}\right)^{n+1} f_L^\ast(s)ds\\
    -(1+\e/2)\int_1^x \varpi_0'(s(1+\e/2))\left(\frac{s}{x}\right)^{n+1}f_R^\ast(s)ds -(1+\e/2)\int_x^\infty \varpi_0'(s(1+\e/2)) \left(\frac{s}{x}\right)^{1-n} f_R^\ast(s)ds=0,
\end{multline}
where we have introduced  the functions
\begin{align}\label{cLcRfunctions}
c_L(x,\e):=c(x(1-\e/2)), \quad x\in[0,1] , \qquad c_R(x,\e):=c(x(1+\e/2)), \quad x\in[1,+\infty).
\end{align}

\subsubsection{Asymptotic analysis of $c_L$ and $c_R$ in terms of parameter $\e$.}\label{ss:analysisC}

We recall that
\[
c(r)=-\frac{1}{r^2}\int_0^r s \varpi_0(s)ds -\frac{\varpi_0(1+\e/2)-\varpi_0(1-\e/2)}{2}, \qquad r\in[0, 1-\e/2].
\]
Thus, for $x\in[0,1]$ it is clear that
\[
c_L(x,\e)=-\frac{1}{(x(1-\e/2))^2}\int_0^{x(1-\e/2)} s \varpi_0(s)ds -\frac{\varpi_0(1+\e/2)-\varpi_0(1-\e/2)}{2}.
\]
Using the expression of $\varpi_0$, we have
\[
c_L(x,\e)=
\frac{16\,\varepsilon}{64 + \varepsilon^{4}}
\;-\;
\frac{2\,\ln\!\left(1 + \frac{1}{4}(-2 + \varepsilon)^{2}x^{2}\right)}
{(-2 + \varepsilon)^{2}x^{2}}.
\]
\begin{remark}
It can be checked that $c_L(x,\e)$, which is, in principle, defined in $[0,1]$, can be extended as a real-analytic function to the full interval $[0,+\infty)$. Notice that this extension is not equal to $c(x(1-\e/2))$, for $x\in [1,+\infty)$.    
\end{remark}

By performing an asymptotic analysis with respect to the parameter $\e$, we obtain:
\begin{equation}\label{preparativos:expcL}
c_L(x,\e)=c_L(x,0)+ (\pa_\e c_L)(x,0)\e + \tfrac{1}{2}(\pa_\e^2 c_L)(x,\e^\ast)\e^2, \qquad \text{for } \e^\ast=\e^\ast(x)\in(0,\e).
\end{equation}
Here, we have
\[
c_L(x,0)=-\frac{\log(1+x^2)}{2x^2},
\]
\begin{equation}\label{def:dcL}
(\pa_\e c_L)(x,0)=\frac{1}{4}+\frac{1}{2(1+x^2)}-\frac{\log(1+x^2)}{2x^2},
\end{equation}
and
\begin{align*}
(\pa_\e^2 c_L)(x,\e)=&-\frac{64\e^3 (320-3\e^3)}{(64+\e^4)^3}+\frac{16}{(2-\e)^2 (4+(2-\e)^2x^2)}-\frac{16-4(2-\e)^2x^2}{(4(2-\e)+(2-\e)^3 x^2)^2}\\
&-\frac{\log\left(1+\frac{(2-\e)^2 x^2}{2^2}\right)}{(2-\e)^4 x^2}.
\end{align*}
In addition, another asymptotic expansion that will be used later is
\begin{align}\label{exp:dxcr}
\pa_x c_L(x,\e)=
\frac{1}{x}\left(\frac{\ln(1+x^{2})}{x^2}-\frac{1}{1+x^{2}}\right)
+\e \pa_\e\p_{ x} c_L(x,\e_\ast), \quad \e_\ast=\e_\ast(x)\in (0,\e),
\end{align}
with
\begin{align*}
    \pa_\e \p_{ x}c_L(x,\e)=
\frac{
8 \left(
\dfrac{2(-2 + \varepsilon)^{2} x^{2} \left( 2 + (-2 + \varepsilon)^{2} x^{2} \right)}{\left( 4 + (-2 + \varepsilon)^{2} x^{2} \right)^{2}}
- \ln\!\left( 1 + \frac{1}{4} (-2 + \varepsilon)^{2} x^{2} \right)
\right)
}{
(-2 + \varepsilon)^{3} x^{3}
}.
\end{align*}

\begin{remark}\label{r:pa2cL}
We observe that $ \pa_\e^2 c_L\in C^\infty([0,7/4]\times [0,\varepsilon_0))$, for a fixed $\e_0$ sufficiently small, with
$$ \sup_{(x,\e)\in [0,7/4]\times [0,\varepsilon_0)} |\pa^2_\e c_L(x,\e)|\leq C(\e_0).$$ Also,
$ \pa_\e\pa_x c_L\in C^\infty([0,7/4]\times [0,\varepsilon_0))$, with
\begin{align*}\sup_{(x,\e)\in [0,7/4]\times [0,\varepsilon_0)} |\pa_\e\pa_x c_L(x,\e)|\leq C(\e_0).\end{align*}
\end{remark}

Proceeding in a similar manner, we recall that
\begin{multline*}
c(r)=-\frac{1}{r^2}\left(\int_0^{1-\e/2}s\varpi_0(s)ds + \int_{1+\e/2}^r s\varpi_0(s)ds\right)\\
-\frac{1}{r^2}\left(\frac{(1+\e/2)^2}{2} \varpi_0(1+\e/2)-\frac{(1-\e/2)^2}{2}\varpi_0(1-\e/2)\right), \qquad r\in[1+\e/2,+\infty).
\end{multline*}
Thus, for $x\in[1,+\infty)$,
\begin{align*}
c_R(x,\e)=&-\frac{1}{(x(1+\e/2))^2}\left(\int_0^{1-\e/2}s\varpi_0(s)ds + \int_{1+\e/2}^{x(1+\e/2)} s\varpi_0(s)ds\right)\\
-&\frac{1}{(x(1+\e/2))^2}\left(\frac{(1+\e/2)^2}{2} \varpi_0(1+\e/2)-\frac{(1-\e/2)^2}{2}\varpi_0(1-\e/2)\right).
\end{align*}
Using the expression of $\varpi_0$, we have
\begin{align*}
c_R(x,\e)=    &\frac{-64\,\varepsilon}{(2 + \varepsilon)^2 (64 + \varepsilon^4)\,x^2}\\
-&\frac{2\,(64 + \varepsilon^4)}{(2 + \varepsilon)^2 (64 + \varepsilon^4)\,x^2}\log\left(\frac{\left(2 + \tfrac{1}{4}(-4 + \varepsilon)\,\varepsilon\right)\left(1 + \tfrac{1}{4}(2 + \varepsilon)^2 x^2\right)}{\left(2 + \varepsilon + \tfrac{\varepsilon^2}{4}\right)}\right).
\end{align*}
\begin{remark}
It can be checked that $c_R(x,\e)$, which is defined in $[1,+\infty)$, can be extended as a real-analytic function to $(0,+\infty)$. Notice that this extension is not equal to $c(x(1+\e/2))$, for $x\in [0,1]$.
\end{remark}

An asymptotic expansion with respect to the small parameter $\e$ yields:
\[
c_R(x,\e)=c_R(x,0)+ (\pa_\e c_R)(x,0)\e + \tfrac{1}{2}(\pa_\e^2 c_R)(x,\e^\ast)\e^2, \qquad \text{for } \e^\ast=\e^\ast(x)\in(0,\e).
\]
Here, we have
\[
c_R(x,0)=-\frac{\log(1+x^2)}{2x^2},
\]
\begin{equation}\label{def:dcR}
(\pa_\e c_R)(x,0)=  -\dfrac{1}{4x^2} + \dfrac{1 }{2x^2 \left(1 + x^2\right)} + \dfrac{ \log\left(1 + x^2\right)}{2x^2},
\end{equation}
and
\begin{multline*}
(\pa_\e^2 c_R)(x,\e)=\frac{384\e}{(2 + \e)^4 (64 + \e^4)\, x^2} +\frac{256\,(64 - 3\e^4)}{(2 + \e)^3\,(64 + \e^4)^2\, x^2} +\frac{256\, \e^3\, (320 - 3\e^4)}{(2 + \e)^2\, (64 + \e^4)^3\, x^2}\\
+\frac{16}{(2 + \e)^3} \left( \frac{4(-8 + \e^2)}{(64 + \e^4)x^2} + \frac{2 + \e}{4 + (2 + \e)^2 x^2} \right)
+\frac{4}{(2 + \e)^2} \left( 
\frac{8\e \left( -64 - 16\e^2 + \e^4 \right)}{(64 + \e^4)^2 x^2}
+ 
\frac{-4 + (2 + \e)^2 x^2}{\left( 4 + (2 + \e)^2 x^2 \right)^2}
\right)\\
+\frac{12}{(2 + \e)^4 x^2}\log\left[\frac{\left(2 + \frac{1}{4} (-4 + \e)\e \right)\left(1 + \frac{1}{4}(2 + \e)^2 x^2 \right)}{\left(2 + \e + \frac{\e^2}{4} \right)}\right].
\end{multline*}

In addition, another asymptotic expansion that will be used later is
\begin{align}\label{exp:dxcr}
\pa_x c_R(x,\e)= \frac{1}{x}\left(\frac{\log(1+x^2)}{x^2}-\frac{1}{1+x^2}\right) +\e \pa_\e \p_{ x} c_R(x,\e_\ast), \quad \e_\ast=\e_\ast(x)\in (0,\e),
\end{align}
with
\begin{align*}
&\pa_\e \pa_{x} c_R(x,\e) \\
&= \frac{8}{(2 + \varepsilon)^3 x^3}
\Bigg[
  2 \Bigg(
     \frac{
       3072 
       + \varepsilon \!\left(
         -1536 
         + \varepsilon \!\left(
           256 
           + \varepsilon \!\left(
             128 
             + \varepsilon \!\left(
               48 
               + \varepsilon \!\left(
                 -56 
                 + \varepsilon (4 + \varepsilon (2 + \varepsilon))
               \right)
             \right)
           \right)
         \right)
       \right)
     }{(64 + \varepsilon^4)^2}  \\[6pt]
 & \qquad + \frac{8}{\big(4 + (2 + \varepsilon)^2 x^2\big)^2}
           - \frac{6}{4 + (2 + \varepsilon)^2 x^2}
   \Bigg)  - \log\!\left(
     \frac{
       \left(2 + \tfrac{1}{4}(-4 + \varepsilon)\varepsilon\right)
       \left(4 + (2 + \varepsilon)^2 x^2\right)
     }{
       8 + \varepsilon (4 + \varepsilon)
     }
   \right)
\Bigg].
\end{align*}

\begin{remark}\label{r:pa2cR}
We observe that $ \pa_\e^2 c_R\in C^\infty([1/4,+\infty)\times [0,\varepsilon_0))$, for a fixed $\e_0$ sufficiently small, with
$$ \sup_{(x,\e)\in [1/4,+\infty)\times [0,\varepsilon_0)} |\pa^2_\e c_R(x,\e)|\leq C(\e_0).$$ Also,
$ \pa_\e\pa_x c_R\in C^\infty([1/4,+\infty)\times [0,\varepsilon_0))$, with
\begin{align*}\sup_{(x,\e)\in [1/4,+\infty)\times [0,\varepsilon_0)} |\pa_\e\pa_x c_R(x,\e)|\leq C(\e_0).\end{align*}
\end{remark}

From this paragraph, it is important to emphasize that, to first order, both expressions coincide, although each is defined on a complementary domain: \( c_L \) is defined on \( [0,1] \), while \( c_R \) is defined on \( [1, +\infty) \). That is, we have
\begin{equation}\label{cl0equalcr0}
c_L(x,0) = -\frac{\log(1 + x^2)}{2x^2} = c_R(x,0).
\end{equation}

\subsection{The ansatz for $\lambda$}

In the remainder of the work, we impose the ansatz   
\begin{equation}\label{def:ansatzlambda}
\lambda = \lambda_0 + \e \lambda_1 + \e^2 \log^2(\e) \lambda_2(\e),
\end{equation}
with  
\begin{align}
    \label{lambda0}\lambda_0 &= \frac{\log(2)}{2}, \\
   \label{bound:l1} \lambda_1 &\in \left(-\frac{\log(2)}{2}, -\frac{1 - \log(2)}{2}\right),\\
   \label{bound:l2} |\lambda_2(\e)|&\leq  M,
\end{align}  
where $M<+\infty$ is a free parameter independent of $\e$ which will be fixed at the end of the paper.  

This choice of $\lambda$ provides  Lemmas \ref{bounddenominator} and \ref{bounddenominatorLeft}, which unravel the behavior of the quotients $(\lambda+c_{R,L}(x,\e))^{-1}$ and  will be used in the following sections.

\begin{lemma}\label{bounddenominator} 
Analysis for $\lambda_0+c_R(x,0)$ and $\l+c_R(x,\e)$. Let $\lambda$ be given by \eqref{def:ansatzlambda}, with $\lambda_0$, $\lambda_1$ and $\lambda_2(\e)$ as in \eqref{lambda0}, \eqref{bound:l1} and \eqref{bound:l2}. Then, there exists $\e_0>0$, such that, for any $0<\e<\e_0$ we have:
\begin{enumerate}
    \item\label{bd1} For $1<x\leq 2$,
    \[
        |\lambda_0+c_R(x,0)|^{-1}, \; |\l+c_R(x,\e)|^{-1} \;\leq\; C|x-1|^{-1}.
    \]

    \item\label{bd2} For $2\leq x<\infty$,
    \[
        |\lambda_0+c_R(x,0)|, \; |\l+c_R(x,\e)| \;\geq\; c.
    \]

    \item\label{bd3} For $1<x\leq 2$,
    \[
        \l+c_R(x,\e) \;\geq\; c(x-1)+c_1\e.
    \]

    \item \label{bd4} There exists $x_R^*\in(0,1)$ such that $\lambda+c_R(x,\e)$ has a simple zero at $x_R^*$ and $\lambda+c_R(x,\e)>0$ in $(x^*_R,1)$. In addition, $x^*_R\to 1$ when $\e\to 0$.
\end{enumerate}
Here $C>0$, $c>0$ are constants that may depend on $\e_0$ but are independent of $\e$.  The constant $c_1>0$ depends on $\e_0$ and $\lambda_1+\frac{\log(2)}{2}>0$ (degenerates if $\lambda_1+\frac{\log(2)}{2}=0$)  and is independent of $\e$. The size of $\e_0$ depends on $M$  and $\lambda_1+\frac{\log(2)}{2}$.  
\end{lemma}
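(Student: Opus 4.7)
The plan rests on two structural facts about the unperturbed function $c_R(x,0)=-\log(1+x^2)/(2x^2)$: first, $c_R(1,0)=-\log(2)/2=-\lambda_0$, so $x=1$ is a zero of $\lambda_0+c_R(\cdot,0)$; second, $\partial_x c_R(x,0)=(1/x)[\log(1+x^2)/x^2-1/(1+x^2)]>0$ for every $x>0$, which follows from $(1+y)\log(1+y)>y$ for $y>0$ applied with $y=x^2$ (this is \eqref{c:increasing} specialized to $\varepsilon=0$). With these two facts, the unperturbed halves of \eqref{bd1} and \eqref{bd2} are immediate: the mean value theorem on $[1,2]$ yields $\lambda_0+c_R(x,0)\geq c_*(x-1)$ with $c_*:=\min_{[1,2]}\partial_y c_R(y,0)>0$, and on $[2,+\infty)$ monotonicity yields $\lambda_0+c_R(x,0)\geq \lambda_0+c_R(2,0)=\log(2)/2-\log(5)/8>0$.

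The core step is \eqref{bd3}. I would split
\[
\lambda+c_R(x,\varepsilon)=\bigl[\lambda+c_R(1,\varepsilon)\bigr]+\int_1^x \partial_y c_R(y,\varepsilon)\,dy
\]
and estimate the two pieces separately. Taylor expanding the boundary term in $\varepsilon$ around $\varepsilon=0$ and using $(\partial_\varepsilon c_R)(1,0)=\log(2)/2$ read off from \eqref{def:dcR} gives
\[
\lambda+c_R(1,\varepsilon)=\bigl(\lambda_1+\tfrac{\log(2)}{2}\bigr)\varepsilon+O\bigl(\varepsilon^2(1+\log^2\varepsilon)\bigr),
\]
with the $O$-term controlled uniformly by Remark \ref{r:pa2cR} together with $|\lambda_2(\varepsilon)|\leq M$ from \eqref{bound:l2}; since \eqref{bound:l1} ensures $\lambda_1+\log(2)/2>0$, choosing $\varepsilon_0$ small enough (depending on $M$ and $\lambda_1+\log(2)/2$) yields $\lambda+c_R(1,\varepsilon)\geq c_1\varepsilon$ with $c_1$ proportional to $\lambda_1+\log(2)/2$. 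For the integrand, \eqref{exp:dxcr} combined with the uniform bound on $\partial_\varepsilon\partial_x c_R$ from Remark \ref{r:pa2cR} gives $\partial_y c_R(y,\varepsilon)\geq \partial_y c_R(y,0)-C\varepsilon\geq c_*/2$ for $y\in[1,2]$ and $\varepsilon$ small, so the integral is at least $(c_*/2)(x-1)$. Summing the two contributions proves \eqref{bd3}, and the perturbed half of \eqref{bd1} follows immediately since $c_1\varepsilon\geq 0$. For the perturbed half of \eqref{bd2} I would Taylor expand in $\varepsilon$ on $[2,+\infty)$, using the decay $|(\partial_\varepsilon c_R)(x,0)|\lesssim 1/x^2$ visible from \eqref{def:dcR} together with Remark \ref{r:pa2cR}, so that the strictly positive unperturbed lower bound absorbs the $O(\varepsilon+\varepsilon^2\log^2\varepsilon)$ perturbation.

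For \eqref{bd4} I would work with the analytic extension of $c_R(\cdot,\varepsilon)$ to $(0,+\infty)$. Part \eqref{bd3} already gives $\lambda+c_R(1,\varepsilon)\geq c_1\varepsilon>0$, while strict monotonicity of $c_R(\cdot,0)$ forces $\lambda_0+c_R(x_0,0)<0$ for every fixed $x_0\in(0,1)$, and by continuity the same holds for $\lambda+c_R(x_0,\varepsilon)$ when $\varepsilon$ is small. The intermediate value theorem then produces $x_R^*\in(x_0,1)$ with $\lambda+c_R(x_R^*,\varepsilon)=0$; simplicity of this zero and the positivity of $\lambda+c_R$ on $(x_R^*,1)$ follow from $\partial_x c_R(x_R^*,\varepsilon)>0$. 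Since $x=1$ is a simple zero of $\lambda_0+c_R(\cdot,0)$, the implicit function theorem applied to $(x,\varepsilon)\mapsto \lambda+c_R(x,\varepsilon)$ yields a smooth branch $\varepsilon\mapsto x_R^*(\varepsilon)$ with $x_R^*(0)=1$, hence $x_R^*\to 1$ as $\varepsilon\to 0$.

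The main obstacle is that a naive linearization of $\lambda+c_R(x,\varepsilon)$ in $\varepsilon$ does not suffice: the coefficient $\lambda_1+(\partial_\varepsilon c_R)(x,0)$ is not uniformly positive on $[1,2]$ (one checks that $(\partial_\varepsilon c_R)(x,0)$ decreases away from $x=1$, while $\lambda_1<0$). The splitting above circumvents this by isolating the single point $x=1$, where \eqref{bound:l1} guarantees the strict positivity of the $O(\varepsilon)$ contribution, and controlling the remaining integral over $y\in[1,x]$ purely through the unperturbed monotonicity; this also explains why the constant $c_1$ in \eqref{bd3} is allowed to degenerate as $\lambda_1+\log(2)/2\to 0$.
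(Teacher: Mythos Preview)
Your proof is correct and your route to \eqref{bd3} is genuinely different from the paper's. The paper expands $\lambda+c_R(x,\varepsilon)=\lambda_0+c_R(x,0)+\varepsilon f(\lambda_1,x)+o(\varepsilon)$ with $f(\lambda_1,x)=\lambda_1+(\partial_\varepsilon c_R)(x,0)$, observes that $f(\lambda_1,1)=\lambda_1+\log(2)/2>0$, and then splits $[1,2]$ into $[1,\bar x]$ where continuity keeps $f\geq \tfrac12(\lambda_1+\log(2)/2)$ and $[\bar x,2]$ where the unperturbed gain $c(x-1)\geq c(\bar x-1)$ absorbs the $O(\varepsilon)$ error. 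Your decomposition $\lambda+c_R(x,\varepsilon)=[\lambda+c_R(1,\varepsilon)]+\int_1^x\partial_y c_R(y,\varepsilon)\,dy$ isolates the only point where the sign of the $\varepsilon$-contribution is delicate and handles the rest through monotonicity alone; this avoids the auxiliary $\bar x$ and makes the degeneration of $c_1$ as $\lambda_1+\log(2)/2\to 0$ completely transparent. Both arguments rely on exactly the same inputs (Remark~\ref{r:pa2cR} and \eqref{exp:dxcr}), but yours is more direct.

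One small repair in \eqref{bd4}: you invoke the implicit function theorem for the branch $\varepsilon\mapsto x_R^*(\varepsilon)$, but the map $(x,\varepsilon)\mapsto \lambda+c_R(x,\varepsilon)$ need not be differentiable in $\varepsilon$, since \eqref{bound:l2} only assumes $|\lambda_2(\varepsilon)|\leq M$. The convergence $x_R^*\to 1$ follows anyway from the uniform lower bound $\partial_x c_R(\cdot,\varepsilon)\geq c/2$ on a fixed neighbourhood of $1$ (which you have, after extending your $[1,2]$ estimate slightly to the left) together with the uniform convergence $\lambda+c_R(\cdot,\varepsilon)\to\lambda_0+c_R(\cdot,0)$; this is essentially what the paper does, taking $x_0=1/2$ as a concrete anchor.
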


\begin{proof} By definition $
       \lambda_0+c_R(x,0)=\frac{\log(2)}{2}-\frac{\log(1+x^2)}{2x^2}$ and then $\lambda_0+c_R(1,0)=0$.  Next, prove that $\partial_x c_R(x,0)>0$ for $x\in [1,+\infty)$. This derivative reads
       \begin{align*}
       \partial_x c_R(x,0)=\frac{1}{x}\left(\frac{\log(1+x^2)}{x^2}-\frac{1}{1+x^2}\right).
       \end{align*}
But
\begin{align}\label{creciente}
    \log(1+x^2)-\frac{x^2}{1+x^2}=\int_0^1 \frac{d}{ds}\log(1+sx^2)-\frac{x^2}{1+x^2}ds=\int_0^1 x^2\frac{(1-s)x^2}{(1+x^2)(1+sx^2)}ds>0.
\end{align}
Therefore, we have proven \eqref{bd1} and \eqref{bd2} for $\lambda_0+c_R(x,0)$.

We recall that
\[
\l+c_R(x,\e)=\lambda_0+c_R(x,0)+\e\left(\l_1 -\dfrac{1}{4x^2} + \dfrac{1 }{2x^2 \left(1 + x^2\right)} + \dfrac{ \log\left(1 + x^2\right)}{2x^2}\right) +\e^2\log^2(\e)\lambda_2(\e)+ O(\e^2).
\]
Thus, in order to prove \eqref{bd1} and \eqref{bd3} for $\lambda+c_R(x,\e)$, we first notice that
\begin{align*}
f(\lambda_1,x):=\l_1 -\dfrac{1}{4x^2} + \dfrac{1 }{2x^2 \left(1 + x^2\right)} + \dfrac{ \log\left(1 + x^2\right)}{2x^2},
\end{align*}
satisfies
\begin{align*}
f(\lambda_1,1)=\lambda_1+\frac{\log(2)}{2}>0, \quad \text{for $\lambda_1\in \left(-\frac{\log(2)}{2},\,-\frac{1-\log(2)}{2}\right)$}.
\end{align*}
Then there exists $\bar{x}$, with $1<\bar{x}<2$, such that $f(\lambda_1,x)\geq \frac{1}{2}(\lambda_1+\frac{\log(2)}{2})$, $1\leq x\leq \bar{x}$. Thus
\begin{align*}
    \lambda+c_R(x,\e)\geq \lambda_0+c_R(x,0)+\frac{1}{4}\left(\lambda_1+\frac{\log(2)}{2}\right)\e\geq c(x-1)+\frac{1}{4}\left(\lambda_1+\frac{\log(2)}{2}\right)\e, \quad \text{for $1\leq x\leq \bar{x}$},
\end{align*}
where we have absorbed the terms $o(\e)$ into $\frac{1}{4}(\lambda_1+\frac{\log(2)}{2})$. 

In addition, for $\bar{x}\leq x\leq 2$, $|f(\lambda_1,x)|\leq C$ and 
\begin{align*}
\lambda+c_R(x,\e) \geq c(x-1)-C\e\geq \frac{c}{2}(x-1)+\frac{c}{2}(\bar{x}-1)-C\e\geq \frac{c}{2}(x-1)+\frac{1}{4}\left(\lambda_1+\frac{\log(2)}{2}\right)\e,
\end{align*}
for $\e$ small enough.

We prove (\ref{bd2}) for $\lambda+c_R(x,\e)$. This just follows from the fact that
\begin{align*}
\lambda+c_R(x,\e)\geq \lambda_0+c_R(x,0)-C\e.
\end{align*}
Finally, we prove \eqref{bd4}. By \eqref{bd3}-Lemma \ref{bounddenominator}, $\l+c_R(1,\e)>0$. In addition, $\lambda_0+c_R(1/2,0)<0$. Thus, using Remark \ref{r:pa2cR} we have that $\lambda+c_R(1/2,\e)<0$, for $\e$ small enough. In addition, \eqref{creciente} shows that $\pa_xc_R(x,0)>c>0$, for $x\in[1/2,2]$.  Using \eqref{exp:dxcr} and Remark \ref{r:pa2cR}, and taking $\e$ small enough, we find that $\pa_x c_R(x,\e)>c/2>0$, for $x\in[1/2,2]$. 
\end{proof}

An analogous statement holds on the left-hand side, whose proof is omitted.
\begin{lemma}\label{bounddenominatorLeft}
Analysis for $\lambda_0+c_L(x,0)$ and $\l+c_L(x,\e)$.  Let $\lambda$ be given by \eqref{def:ansatzlambda}, with $\lambda_0$, $\lambda_1$ and $\lambda_2(\e)$ as in \eqref{lambda0}, \eqref{bound:l1} and \eqref{bound:l2}. Then, there exists $\e_0>0$, such that, for any $0<\e<\e_0$ we have:
\begin{enumerate}
    \item\label{bl1} For $0<x< 1$,
    \[
        |\lambda_0+c_L(x,0)|^{-1}, \; |\l+c_L(x,\e)|^{-1} \;\leq\; C|x-1|^{-1}.
    \]

    \item\label{bl2} For $0<x< 1$,
    \[
        \l+c_L(x,\e) \;\leq\; -c(1-x)-c_1\e.
    \]

    \item \label{bl3}  There exists $x_L^*\in(1,+\infty)$ such that $\lambda+c_L(x,\e)$ has a simple zero at $x_L^*$ and $\lambda+c_L(x,\e)<0$ in $(1,x_L^*)$. In addition, $x^*_L\to 1$ when $\e\to 0$.
\end{enumerate}
Here $C>0$, $c>0$ are constants that may depend on $\e_0$ but are independent of $\e$.  The constant $c_1>0$ depends on $\e_0$ and $\lambda_1+\frac{1-\log(2)}{2}<0$  (degenerates if $\lambda_1+\frac{1-\log(2)}{2}=0$) and is independent of $\e$. The size of $\e_0$ depends on $M$  and $\lambda_1+\frac{1-\log(2)}{2}$. 
\end{lemma}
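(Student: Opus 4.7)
The plan is to mirror the proof of Lemma \ref{bounddenominator} essentially verbatim, exploiting the fact that the choice of range \eqref{bound:l1} is engineered so that $\lambda_1+\tfrac{\log(2)}{2}>0$ on the right and $\lambda_1+\tfrac{1-\log(2)}{2}<0$ on the left. The sign flip pushes the simple zero of $\lambda+c_L(x,\e)$ to the right of $x=1$ (into $(1,+\infty)$) rather than to the left, while $\lambda+c_L(x,\e)$ itself stays strictly negative on all of $(0,1)$ with quantitative lower bounds.

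First, by \eqref{cl0equalcr0} we have $c_L(x,0)=-\log(1+x^2)/(2x^2)=c_R(x,0)$, so $\lambda_0+c_L(1,0)=0$, and the identity \eqref{creciente} immediately yields $\partial_x c_L(x,0)>0$ for all $x>0$. Consequently $\lambda_0+c_L(x,0)<0$ on $(0,1)$, with $|\lambda_0+c_L(x,0)|\geq c(1-x)$ on $[1/2,1]$ by the non-vanishing derivative and $|\lambda_0+c_L(x,0)|\geq c>0$ on $(0,1/2]$. This establishes \eqref{bl1} for the unperturbed case.

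Next, insert the Taylor expansion \eqref{preparativos:expcL} together with ansatz \eqref{def:ansatzlambda}, so that
\[
\lambda+c_L(x,\e)=\bigl(\lambda_0+c_L(x,0)\bigr)+\e\,f(\lambda_1,x)+\e^2\log^2(\e)\lambda_2(\e)+\tfrac{1}{2}\e^2(\partial_\e^2 c_L)(x,\e^\ast),
\]
where $f(\lambda_1,x):=\lambda_1+(\partial_\e c_L)(x,0)$. From \eqref{def:dcL}, $f(\lambda_1,1)=\lambda_1+\tfrac{1-\log(2)}{2}<0$ by \eqref{bound:l1}, so by continuity there exists $\bar{x}\in(1/2,1)$ with $f(\lambda_1,x)\leq \tfrac{1}{2}f(\lambda_1,1)<0$ for $x\in[\bar{x},1]$. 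Using Remark \ref{r:pa2cL} together with \eqref{bound:l2}, the $O(\e^2\log^2\e)$ remainder is absorbed into this negative constant for $\e$ small, yielding
\[
\lambda+c_L(x,\e)\leq -c(1-x)-c_1\e,\qquad x\in[\bar{x},1].
\]
For $x\in(0,\bar{x}]$ the term $f(\lambda_1,x)$ is merely bounded, but $\lambda_0+c_L(x,0)\leq -c(1-\bar{x})$ dominates after we again absorb the $\e$-correction, giving the same bound. This establishes \eqref{bl2}, and \eqref{bl1} for the perturbed case is an immediate consequence.

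For \eqref{bl3}, one checks numerically that $\lambda_0+c_L(7/4,0)=\tfrac{\log(2)}{2}-\tfrac{8}{49}\log(65/16)>0$, and by Remark \ref{r:pa2cL} the perturbation is $O(\e)$-small on $[0,7/4]$, so $\lambda+c_L(7/4,\e)>0$ for $\e$ small. Combined with $\lambda+c_L(1,\e)<0$ from \eqref{bl2}, the intermediate value theorem produces $x_L^\ast\in(1,7/4)$ with $\lambda+c_L(x_L^\ast,\e)=0$. The same proof as in the right-hand case---using \eqref{exp:dxcr}, \eqref{creciente}, and Remark \ref{r:pa2cL}---gives $\partial_x c_L(x,\e)>c/2>0$ on $[1/2,7/4]$, so $x_L^\ast$ is the unique and simple zero, $\lambda+c_L(x,\e)<0$ on $(1,x_L^\ast)$ by monotonicity, and $x_L^\ast\to 1$ as $\e\to 0$ by continuous dependence. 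The only nontrivial point is the elementary numerical verification $\lambda_0+c_L(7/4,0)>0$, which is where we need a fixed point in the range $[0,7/4]$ covered by Remark \ref{r:pa2cL}; aside from that the argument is a sign-flipped copy of Lemma \ref{bounddenominator}.
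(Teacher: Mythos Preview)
The proposal is correct and takes essentially the same approach as the paper, which explicitly omits the proof and declares it analogous to Lemma~\ref{bounddenominator}. You have filled in the sign-flipped argument accurately, including the key observation that $f(\lambda_1,1)=\lambda_1+\tfrac{1-\log 2}{2}<0$ under \eqref{bound:l1} and the numerical verification at $x=7/4$ needed to invoke Remark~\ref{r:pa2cL}.
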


\subsection{From an integral equation to an ODE}
After all these preparations, we must not lose sight of the point that our objective is to solve the coupled system of integral equations \eqref{inteq:fLrescaled}-\eqref{inteq:fRrescaled}. For the convenience of the readers, both integral equations are recalled below:
\begin{multline*}
2n   (\lambda + c_L(x,\e))f_L^\ast(x)-(1-\e/2)\int_0^{x}\varpi_0'(s(1-\e/2)) \left(\frac{s}{x}\right)^{n+1}  f_L^\ast(s)ds \\
-(1-\e/2)\int_{x}^{1}\varpi_0'(s(1-\e/2)) \left(\frac{s}{x}\right)^{1-n}f_L^\ast(s)ds\\
-(1+\e/2)\left(\frac{1+\e/2}{1-\e/2}\right)^{1-n}\int_{1}^{+\infty}\varpi_0'(s(1+\e/2))\left(\frac{s}{x}\right)^{1-n}  f_R^\ast(s)ds=0, \qquad x\in [0,1],
\end{multline*}
and 
\begin{multline*}
    2n(\l+c_R(x,\e))f_R^\ast(x)-(1-\e/2)\left(\frac{1-\e/2}{1+\e/2}\right)^{n+1}\int_0^1 \varpi_0'(s(1-\e/2))\left(\frac{s}{x}\right)^{n+1}f_L^\ast(s)ds\\
    -(1+\e/2)\left(\frac{1+\e/2}{1-\e/2}\right)^{n+1}\int_1^x \varpi_0'(s(1+\e/2))\left(\frac{s}{x}\right)^{n+1} f_R^\ast(s)ds\\ -(1+\e/2)\left(\frac{1+\e/2}{1-\e/2}\right)^{1-n}\int_x^\infty \varpi_0'(s(1+\e/2)) \left(\frac{s}{x}\right)^{1-n} f_R^\ast(s)ds=0, \qquad x\in [1,+\infty).
\end{multline*}

\begin{remark}
It is important to emphasize that $\l_0$ is fixed from the beginning but at this time $\l_1\in(-\log(2)/2,-(1-\log(2))/2)$ is a free parameter that will be fixed later, see Section \ref{s:fixingl1}. The same happens for $\l_2(\e)$, which will be fixed at the end of the paper by a fixed point.
\end{remark}

\subsubsection{An appropiate auxiliary function}
Now, we come back to the integral equations \eqref{inteq:fLrescaled}-\eqref{inteq:fRrescaled}. Hence, we consider $H=(H_L,H_R)$ given by
\begin{equation}\label{def:hL}
H_L(x):=2nx(1-\e/2)(\l + c_L(x,\e))f_L^\ast(x), \qquad x\in [0,1],
\end{equation}
and
\begin{equation}\label{def:hR}
H_R(x):=2nx(1+\e/2)(\l + c_R(x,\e))f_R^\ast(x), \qquad x\in [1,+\infty).
\end{equation}

Then, equation \eqref{inteq:fLrescaled} can be written as
\begin{multline}\label{inteqleft_final}
H_L(x)-\frac{(1-\e/2)}{2n}\int_0^{x}\varpi_0'(s(1-\e/2)) \left(\frac{s}{x}\right)^{n}  \frac{H_L(s)}{\l+c_L(s,\e)}ds\\ 
-\frac{(1-\e/2)}{2n}\int_{x}^{1}\varpi_0'(s(1-\e/2)) \left(\frac{s}{x}\right)^{-n}\frac{H_L(s)}{\l+c_L(s,\e)}ds\\
-\frac{(1+\e/2)}{2n}\left(\frac{1+\e/2}{1-\e/2}\right)^{-n} \int_{1}^{+\infty}\varpi_0'(s(1+\e/2))\left(\frac{s}{x}\right)^{-n} \frac{H_R(s)}{\l+c_R(s,\e)}ds=0, \qquad x\in [0,1],
\end{multline}
and similarly, equation \eqref{inteq:fRrescaled} can be written as
\begin{multline}\label{inteqright_final}
    H_R(x)-\frac{ (1-\e/2)}{2n}\left(\frac{1-\e/2}{1+\e/2}\right)^{n}\int_0^1\varpi_0'(s(1-\e/2))\left(\frac{s}{x}\right)^{n} \frac{H_L(s)}{\l+c_L(s,\e)}ds\\
    -\frac{(1+\e/2)}{2n}\int_1^x \varpi_0'(s(1+\e/2))\left(\frac{s}{x}\right)^{n}\frac{H_R(s)}{\l+c_R(s,\e)}ds\\
    -\frac{(1+\e/2)}{2n}\int_x^\infty \varpi_0'(s(1+\e/2)) \left(\frac{s}{x}\right)^{-n} \frac{H_R(s)}{\l+c_R(s,\e)}ds=0, \qquad x\in [1,+\infty).
\end{multline}

\subsubsection{Computing derivatives in the integral equations} \label{s:InteqtoODE}
Taking a derivative in the above expression \eqref{inteqleft_final} and using the fact that the boundary terms cancel each other, we have
\begin{multline}\label{eq:1derivative}
H_L'(x) -\frac{(-n)}{x}  \frac{1}{2n}\int_0^{x}(1-\e/2)\varpi_0'(s(1-\e/2)) \left(\frac{s}{x}\right)^{n}  \frac{H_L(s)}{\l+c_L(s,\e)}ds\\ 
-\frac{(+n)}{x}\frac{1}{2n}\int_{x}^{1}(1-\e/2)\varpi_0'(s(1-\e/2)) \left(\frac{s}{x}\right)^{-n}\frac{H_L(s)}{\l+c_L(s,\e)}ds\\
-\frac{(+n)}{x}\frac{1}{2n}\int_{1}^{+\infty}(1+\e/2)\varpi_0'(s(1+\e/2))\left(\frac{s}{x}\right)^{-n} \left(\frac{1+\e/2}{1-\e/2}\right)^{-n}  \frac{H_R(s)}{\l+c_R(s,\e)}ds=0.
\end{multline}

By introducing the auxiliary notation 
\begin{equation}\label{def:A}
A(x):= \frac{1}{2n}\int_0^{x}(1-\e/2)\varpi_0'(s(1-\e/2)) \left(\frac{s}{x}\right)^{n}  \frac{H_L(s)}{\l+c_L(s,\e)}ds, 
\end{equation}
\begin{multline}\label{def:B}
B(x):=\frac{1}{2n}\int_{x}^{1}(1-\e/2)\varpi_0'(s(1-\e/2)) \left(\frac{s}{x}\right)^{-n}\frac{H_L(s)}{\l+c_L(s,\e)}ds \\
+ \frac{1}{2n}\int_{1}^{+\infty}(1+\e/2)\varpi_0'(s(1+\e/2))\left(\frac{s}{x}\right)^{-n}\left(\frac{1+\e/2}{1-\e/2}\right)^{-n} \frac{H_R(s)}{\l+c_R(s,\e)}ds,
\end{multline}
we can write \eqref{inteqleft_final} and \eqref{eq:1derivative} in a more compact form as follows
\begin{align*}
    H_L(x)-A(x)-B(x)=0,\\
    \frac{x}{n} H_L'(x)+A(x)-B(x)=0.
\end{align*}
Thus, we have the following relations:
\begin{equation}\label{eq:relationAyB}
H_L(x)+\frac{x}{n} H_L'(x)=2B(x), \qquad H_L(x)-\frac{x}{n}H_L'(x)=2A(x).
\end{equation}

Now, taking another derivative in \eqref{eq:1derivative} we obtain taking into account that the boundary terms do not cancel out and give us now an extra term that
\begin{multline*}
H_L''(x) -\frac{(-n)(-n-1)}{x^2}  \frac{1}{2n}\int_0^{x}(1-\e/2)\varpi_0'(s(1-\e/2)) \left(\frac{s}{x}\right)^{n}  \frac{H_L(s)}{\l+c_L(s,\e)}ds\\ 
-\frac{(+n)(+n-1)}{x^2}\frac{1}{2n}\int_{x}^{1}(1-\e/2)\varpi_0'(s(1-\e/2)) \left(\frac{s}{x}\right)^{-n}\frac{H_L(s)}{\l+c_L(s,\e)}ds\\
-\frac{(+n)(+n-1)}{x^2}\frac{1}{2n}\int_{1}^{+\infty}(1+\e/2)\varpi_0'(s(1+\e/2))\left(\frac{s}{x}\right)^{-n}\left(\frac{1+\e/2}{1-\e/2}\right)^{-n} \frac{H_R(s)}{\l+c_R(s,\e)}ds\\
+\frac{1-\e/2}{x(1-\e/2)}\frac{(1-\e/2)\varpi_0'(x(1-\e/2)) H_L(x)}{\l+c_L(x,\e)}=0,
\end{multline*}
which can be simplified using the notation introduced above as follows
\[
H_L''(x)-\frac{(-n)(-n-1)}{x^2}  A(x)-\frac{(+n)(+n-1)}{x^2} B(x)+\frac{1}{x}\frac{(1-\e/2)\varpi_0'(x(1-\e/2)) H_L(x)}{\l+c_L(x,\e)}=0.
\]
Finally, using \eqref{eq:relationAyB} and simplifying, we arrive at
\[
H_L''(x)+\frac{1}{x}H_L'(x)-\frac{1}{x^2}\left(n^2-x\frac{(1-\e/2)\varpi_0'(x(1-\e/2))}{\l+c_L(x,\e)}\right)H_L(x)=0,
\]
for $x\in (0,1).$

We now repeat the same procedure by taking derivatives at \eqref{inteqright_final}. Applying the same argument with suitable modifications, we ultimately obtain
\begin{equation*}
H_R''(x)+\frac{1}{x}H_R'(x)-\frac{1}{x^2}\left(n^2-x\frac{(1+\e/2)\varpi_0'(x(1+\e/2))}{\l+c_R(x,\e)}\right)H_R(x)=0,
\end{equation*}
for $x\in (1,+\infty)$.

\subsubsection{The boundary conditions of the ODE}
From \eqref{bc:fromeq} together with \eqref{def:hL}, \eqref{def:hR} and
$$\lim_{r \to 0^+} c_L(r,\e)=-\frac{\varpi_0(0)}{2}-\frac{\varpi_0(1+\e/2)-\varpi_0(1-\e/2)}{2}, \qquad \lim_{r \to +\infty}  c_R(r,\e)=0,$$
we deduce 
$$\left(\lim_{x \to 0^+}x^{1-n}H_L(x),\lim_{x\to + \infty}x^{n-1} H_R(x)\right)=(0,0).$$

\subsection{The ODE system}
After all these calculations, we have relaxed our problem from a system of coupled integral equations to a system of ODEs. That is, we have shown that a smooth solution of \eqref{inteqleft_final}-\eqref{inteqright_final}, $H=(H_L,H_R)$,  satisfies the following system of uncoupled ODEs:

\begin{equation}\label{eqfrob:odeleft}
\left\{
\begin{aligned}
    H_L''(x)+\frac{1}{x}H_L'(x)-\frac{1}{x^2}\left(n^2 - x\frac{(1-\e/2)\varpi_0'(x(1-\e/2))}{\lambda + c_L(x,\e)}\right)H_L(x) &= 0, \quad x \in (0,1), \\
    \lim_{x \to 0^+} x^{1-n}H_L(x) &= 0,
\end{aligned}
\right.
\end{equation}
and
\begin{equation}\label{eqfrob:oderight}
\left\{
\begin{aligned}
    H_R''(x) + \frac{1}{x} H_R'(x) - \frac{1}{x^2} \left(n^2 - x\frac{(1+\e/2)\varpi_0'(x(1+\e/2))}{\lambda + c_R(x,\e)}\right) H_R(x) &= 0, \quad x \in (1,+\infty), \\
    \lim_{x \to +\infty} x^{n-1}H_R(x) &= 0,
\end{aligned}
\right.
\end{equation}

The process to find a solution to \eqref{inteqleft_final} and \eqref{inteqright_final} will consist of three main steps:

\begin{enumerate}
    \item Solve the system of ODEs for general $\e > 0$ and the limiting system obtained for $\e = 0$. 
    
    \item Study the difference between the solution for general $\e > 0$ and the limiting one obtained for $\e = 0$. Here, we require a precise control of the difference in terms of the parameter $\e$.

    \item Using the above, we reduce the problem of solving the original system of integral equations to solving a single equation for the only remaining unknown at that point, namely the parameter  $\lambda$.

\end{enumerate}

\section{Existence and uniqueness}
In this section we shall study equations \eqref{eqfrob:odeleft} and \eqref{eqfrob:oderight}.

It is important to emphasize that \( H_L, H_R, c_L, c_R \), and \( \lambda \) implicitly depend on the parameter \( \varepsilon \).
We note that at this time, both systems are decoupled. The only relationship between them is the eigenvalue $\l$ that appears in both.

In addition, we note that the boundary values $H_L(1^-)$ and $H_R(1^+)$ are not fixed.   For convenience, we will impose these two values  to be $1$ and change the names of the functions from $H_R$ and $H_L$ to $h_R$ and $h_L$ to emphasize this fact. 

With this normalization, the general solution to \eqref{eqfrob:odeleft}--\eqref{eqfrob:oderight} can be written as \( H = (A h_L, B h_R) \), where $A$, $B\in \mathbb{R}$ are free parameters. \\

In addition, we denote by $\mathring{h}$ the solution of the limiting problem of \eqref{eqfrob:odeleft}-\eqref{eqfrob:oderight} resulting from making $\varepsilon\to 0$. With a slight abuse of notation, we will use the same symbol \( \mathring{h} \) to refer to both the left and right parts of the solution, \( \mathring{h}_L \) and \( \mathring{h}_R \), when no confusion arises. 

That is, $\mathring{h}$ solves
\begin{equation}\label{eq:odeep=0}
    \mathring{h}''(x)+\frac{1}{x}\mathring{h}'(x)-\frac{1}{x^2}\left(n^2-\frac{x \varpi_0'(x)}{\lambda_0-\mathring{c}(x)}\right)\mathring{h}(x)=0, \qquad \text{for } x\in(0,1)\cup(1,+\infty)
\end{equation}
with
\begin{equation}\label{eqBC:odeep=0}
\lim_{x\to 0^+}x^{1-n}\mathring{h}(x)=0=\lim_{x\to+\infty}x^{n-1}\mathring{h}(x), \qquad \text{and} \qquad \lim_{x\to 1^-}\mathring{h}(x)=1=\lim_{x\to 1^+}\mathring{h}(x).
\end{equation}

With a slight abuse of notation, we let \( \mathring{c}(x) \) denote either \( c_L(x,0) \) or \( c_R(x,0) \) in their respective domains, as they coincide by~\eqref{cl0equalcr0}. That is,
\[
\mathring{c}(x)=-\frac{\log(1 + x^2)}{2x^2}.
\]

For the remainder of this section, and in order to simplify notation, we will denote
\begin{align*}
\Psi_0(x):&=-\frac{x \varpi_0'(x)}{\lambda_0-\mathring{c}(x)}=\frac{2x^2}{(1+x^2)^{2}}\frac{1}{\frac{\log(2)}{2}-\frac{\log(1+x^2)}{2x^2}}.
\end{align*}

The function $\Psi_0$ is real analytic in $[0,1)\cup(1,+\infty)$. Moreover, it has a pole of order 1 in $x=\pm 1$, singular points at $x=\pm i$ and discontinuous branches at $\pm i (1,+\infty).$

\subsection{The limiting ODE in the right}\label{s:FrobRight}
In this paragraph, we are going to study
\begin{equation}\label{eq:oderight}
    \mathring{h}_R''(x)+\frac{1}{x}\mathring{h}_R'(x)-\frac{1}{x^2}\left(n^2+\Psi_0(x)\right)\mathring{h}_R(x)=0, \qquad \text{for } x\in(1,+\infty),
\end{equation}
with
\begin{equation}\label{eq:bcoderight}
\lim_{x\to 1^+}\mathring{h}_R(x)=1, \qquad \text{and} \qquad \lim_{x\to +\infty}x^{n-1}\mathring{h}_R(x)=0.
\end{equation}

The main result of this section is the following lemma.
\begin{lemma}\label{l:prophr}
The system \eqref{eq:oderight}-\eqref{eq:bcoderight} admits a unique solution $\mathring{h}_R\in C([1,+\infty))\cap C^\infty((1,+\infty))$.  Moreover, this solution satisfies:
\begin{enumerate}
    \item \( \mathring{h}_R(x) > 0 \) for all \( x \in [1,+\infty) \).
    \item $\|x^n\mathring{h}_R\|_{L^\infty((1,+\infty))}<\infty$.
    \item \( \mathring{h}_R(x) \) behaves as \( 1 + O(|x - 1|\log|x - 1|^{-1}) \) as \( x \to 1^+ \).
    \item $\|\mathring{h}_R\|_{C^2([a,+\infty))}<+\infty$, with $a>1$.
\end{enumerate}
\end{lemma}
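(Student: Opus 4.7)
I would prove \lem{l:prophr} by combining a Frobenius analysis at the singular point $x=1$, an asymptotic analysis at infinity, and a shooting argument closed by an energy identity that exploits the sign of $\Psi_0$.

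\textbf{Local structure.} A direct computation, using the inequality in \eqref{creciente}, shows that $\Psi_0$ has a simple pole at $x=1$ with \emph{positive} residue and is real-analytic and strictly positive on $(1,+\infty)$, with $\Psi_0(x)=O(x^{-2})$ as $x\to+\infty$. In particular, $x=1$ is a regular singular point of \eqref{eq:oderight}: the indicial equation reads $\rho(\rho-1)=0$, with roots $\rho\in\{0,1\}$. Since the roots differ by the integer $1$, Frobenius theory furnishes two linearly independent local solutions
\[
\phi_1(x)=1+\gamma(x-1)\log(x-1)+O(|x-1|),\qquad \phi_2(x)=(x-1)+O(|x-1|^2),
\]
for some constant $\gamma\in\mathbb{R}$ (possibly zero), which extend uniquely to analytic (hence $C^\infty$) functions on all of $(1,+\infty)$ because the coefficients of the ODE are real-analytic there.

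\textbf{Behavior at infinity and shooting.} The change of variables $u=1/x$ converts \eqref{eq:oderight} into an ODE with a regular singular point at $u=0$ and indicial exponents $\pm n$; alternatively, by standard perturbation of Euler's equation $v''+v'/x-n^2 v/x^2=0$ with integrable perturbation $\Psi_0/x^2=O(x^{-4})$, one obtains a basis $\psi_\pm(x)\sim x^{\pm n}$ as $x\to+\infty$. The decay condition $x^{n-1}\mathring{h}_R(x)\to 0$ selects exactly the one-dimensional subspace $\mathrm{span}(\psi_-)$. The boundary condition $\mathring{h}_R(1^+)=1$ forces $\mathring{h}_R=\phi_1+\delta\phi_2$ for some $\delta\in\mathbb{R}$ (since $\phi_2(1)=0$). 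Existence and uniqueness then reduce to showing that exactly one $\delta$ makes $\phi_1+\delta\phi_2\in\mathrm{span}(\psi_-)$, equivalently that $\phi_2\notin\mathrm{span}(\psi_-)$.

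\textbf{Energy identity and closure.} Multiplying \eqref{eq:oderight} by $xh$ and integrating by parts gives, for any classical solution $h$,
\[
\int_a^b x\,h'(x)^2\,dx + \int_a^b \frac{n^2+\Psi_0(x)}{x}\,h(x)^2\,dx = \bigl[x\,h(x)\,h'(x)\bigr]_a^b.
\]
Applied with $h=\phi_2$, $a=1$, $b=+\infty$: the lower boundary term vanishes since $\phi_2(1)=0$, and if $\phi_2\in\mathrm{span}(\psi_-)$ the upper one vanishes by $\phi_2\sim c x^{-n}$. Since $\Psi_0>0$ on $(1,+\infty)$, both integrands are nonnegative, forcing $\phi_2\equiv 0$, a contradiction. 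Hence the shooting map $\delta\mapsto(\text{coefficient of }\psi_+\text{ in }\phi_1+\delta\phi_2)$ is affine-linear with nonzero slope and vanishes at a unique $\delta$. The \emph{same} identity applied to $h=\mathring{h}_R$ on $[x^\ast,+\infty)$, should a zero $x^\ast>1$ exist, forces $\mathring{h}_R\equiv 0$ there and hence globally by unique continuation, contradicting $\mathring{h}_R(1)=1$; this yields positivity. The remaining properties are immediate: the Frobenius expansion gives $\mathring{h}_R(x)=1+O(|x-1|\log|x-1|^{-1})$; the behavior $\mathring{h}_R\sim\alpha x^{-n}$ at infinity combined with continuity on compacta gives $\|x^n\mathring{h}_R\|_{L^\infty(1,+\infty)}<\infty$; and analyticity of the ODE coefficients on $(1,+\infty)$ together with these bounds yields $\|\mathring{h}_R\|_{C^2([a,+\infty))}<\infty$ for any $a>1$. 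The main obstacle is precisely the energy-identity step: ruling out the degenerate scenario $\phi_2\in\mathrm{span}(\psi_-)$, which would simultaneously break uniqueness and preclude matching the value $1$ at $x=1$; this works cleanly only thanks to the pointwise positivity of $\Psi_0$ on $(1,+\infty)$.
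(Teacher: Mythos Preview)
Your argument is correct and in fact somewhat cleaner than the paper's for this particular lemma. The paper proceeds by first changing variables $z=1/x$, constructing the decaying solution near $z=0$ via a contraction argument in a weighted space (the integral equation \eqref{eq:oderightnewintf}), then performing the Frobenius analysis at $z=1$, and finally invoking a positivity lemma (Lemma~\ref{l:positivity_right}, proved later as Lemma~\ref{l:generalRIGHT}) based on the Volterra representation \eqref{formula} with the sign of the kernel $(x/z)^n-(z/x)^n$. Your route---Frobenius at $x=1$, Levinson-type asymptotics at infinity, and the Sturm--Liouville energy identity for both non-degeneracy and positivity---arrives at the same conclusion with fewer moving parts.

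The trade-off is robustness. Your energy argument hinges on $\Psi_0>0$ throughout $(1,+\infty)$, which indeed holds here. On the left interval $(0,1)$, however, $\Psi_0<0$ (since $\lambda_0+c_L(x,0)<0$ there), and in fact $n^2+\Psi_0$ is not sign-definite near $x=1^-$; so the same identity does not close for the companion Lemma~\ref{l:hlring}. The paper's integral-equation approach (and its iteration in Lemma~\ref{l:generalLeft}) is designed to treat both sides uniformly, at the cost of a slightly heavier proof on the right. One minor remark: you write that $\gamma$ is ``possibly zero,'' but the paper computes the logarithmic coefficient explicitly as $\varpi_0'(1)/(\partial_x c_R)(1,0)\neq 0$, which is what makes property~(3) sharp rather than merely an upper bound.
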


Now, we have to work with the infinite domain $(1,+\infty).$ To handle it, it will be convenient to introduce the auxiliary variable $z:=1/x\in(0,1)$ and the auxiliary function  $\mathring{\hh}_R(z):=\mathring{h}_R(1/z)$. Then,
\[
\mathring{h}_R'(x)=-z^2 \mathring{\hh}_R'(z), \qquad  \mathring{h}_R''(x) = 2z^3 \mathring{\hh}_R'(z) + z^4 \mathring{\hh}_R''(z),
\]
and \eqref{eq:oderight} translate into
\begin{equation}\label{eq:oderightnew}
\mathring{\hh}_R''(z)+\frac{1}{z}\mathring{\hh}_R'(z)-\frac{1}{z^2}\left(n^2+\Psi_0(1/z)\right)\mathring{\hh}_R(z)=0, \qquad \text{for } z\in(0,1),
\end{equation}
with the boundary conditions \eqref{eq:bcoderight} converted into
\begin{equation}\label{eq:bcoderightnew}
\lim_{z\to 0^+}z^{1-n}\mathring{\hh}_R(z)=0, \qquad \text{and} \qquad \lim_{z\to 1^-}\mathring{\hh}_R(z)=1.
\end{equation}

\subsubsection{Analysis of \eqref{eq:oderightnew}}

We first consider 
\begin{align*}
    h''(z)+\frac{1}{z}h'(z)-\frac{n^2}{z^2}h(z) = g(z), \qquad \text{for } z\in(0,1). 
\end{align*}
Since the solutions of the homogeneous part are $Az^n+Bz^{-n}$, we can write
\begin{align*}
h(z)= A z^n+ Bz^{-n}+\frac{z^n}{2n}\int_{0}^z y^{-n+1}g(y)dy-\frac{z^{-n}}{2n}\int_0^z y^{n+1}g(y)dy,
\end{align*}
where it is assumed that $g(y)$ has enough cancellation at zero for the first integral to be convergent. Since we need $h(0)=0$ we pick $B=0$. Thus, we look for solutions to \eqref{eq:oderightnew} through the integral equation
\begin{align}\label{eq:oderightnewint}
    \mathring{\hh}_R(z)= Az^n+\frac{z^n}{2n}\int_0^z y^{-n+1}\frac{\Psi_0(1/y)}{y^2} \mathring{\hh}_R(y)dy-\frac{z^{-n}}{2n}\int_0^z y^{n+1}\frac{\Psi_0(1/y)}{y^2} \mathring{\hh}_R(y)dy.
\end{align}
Here we remark that
\[
\frac{\Psi_0(1/y)}{y^2}
=
\frac{4}{
(1+y^{2})^{2}\left(\log 2 - y^{2}\,\log\!\left(1+\frac{1}{y^{2}}\right)\right)
},
\]
and then $\Psi_0(1/y)/y^2\in C([0,a])$ for any $a<1$. We write $C(a):=\|\Psi_0(1/y)/y^2\|_{L^\infty([0,a])}$. We take $\mathring{\hh}_R(z)=z^n f(z)$. Then, \eqref{eq:oderightnewint} reads
\begin{align}\label{eq:oderightnewintf}
f(z)=A+\frac{1}{2n}\int_0^z y\frac{\Psi_0(1/y)}{y^2}f(y)dy -\frac{z^{-2n}}{2n}\int_0^z y^{2n+1}\frac{\Psi_0(1/y)}{y^2}f(y)dy.
\end{align}
Let us fix $A=1$. To solve \eqref{eq:oderightnewintf} we can apply a classical contraction argument. We define the space $ X_\alpha:=\{ f\in  C([0,a]): \|f\|_{X_\alpha}<+\infty\}  $ with the norm $\|f\|_{X_\alpha}:= \sup_{z\in[0,a]}|e^{-\alpha z} f(z)|$, $\alpha>0$. Thus the application
$$F[f](z):= 1+\frac{1}{2n}\int_0^z y\frac{\Psi_0(1/y)}{y^2}f(y)dy -\frac{z^{-2n}}{2n}\int_0^z y^{2n+1}\frac{\Psi_0(1/y)}{y^2}f(y)dy,$$
satisfies
\begin{align*}
\|F[f]\|_{X_\alpha}&\leq 1+ \frac{C(a)}{2n}\|f\|_{X_\alpha}\left(\sup_{z\in [0,a]}\left(e^{-z\alpha}\int_0^z y e^{\alpha y}dy \right)
+ \sup_{z\in [0,a]} \left(e^{-\alpha z }z^{-2n}\int_0^z y^{2n+1} e^{\alpha y}dy\right)\right)\\
&\leq 1+ \frac{C(a)}{2n}\|f\|_{X_\alpha}\frac{C}{\alpha}.
\end{align*}
We then have that $F$ is a linear contraction for $\alpha$ large enough. This last fact gives us that there exists a solution $f(z)$, $z\in (0,1)$, which belongs to $C([0,a])$ for any $a<1$, of \eqref{eq:oderightnewintf}.  By a bootstrapping, we get that this solution is smooth in $(0,a)$ for any $a<1$, since $\Psi_0(1/z)/z^2$ is smooth in $(0,a)$. Also, since $\Psi_0(1/z)/z^2\in C^1([0,a])$, we have $f\in C^2([0,a])$ for any $a<1$. 

Thus we have proven the existence of a solution 
\begin{align}\label{haux}\mathring{\hh}_R(z)=z^n f(z)\in C^2([0,a]),
\end{align}
of \eqref{eq:oderightnew}, smooth in  $(0,a)$, for all $a<1$ and satisfying the first boundary condition of \eqref{eq:bcoderightnew}.

It remains to study the behavior of this solution as  $z=1$, and to check the second boundary condition in \eqref{eq:bcoderightnew}. Since we have fixed \( A = 1 \) in \eqref{eq:oderightnewintf}, the function \( \mathring{\hh}_R(z) \) defined in \eqref{haux} does not necessarily satisfy \( \mathring{\hh}_R(1^-) = 1 \). However, we will show that \( 0 < \mathring{\hh}_R(1^-) < +\infty \). Therefore, by simply normalizing the function via $\mathring{\hh}_R(z)/\mathring{\hh}_R(1^-),$ 
we obtain a solution to \eqref{eq:oderightnew} satisfying \eqref{eq:bcoderightnew}.

\begin{remark}
The same argument works for  other profiles, as for example, either $e^{-x^2}$ or $(1+x^2)^{-k}$, with  $k>1$,  since the corresponding $\Psi_0(1/x)$ also satisfies that $||\Psi_0(1/x)/x^2||_{ C^1([0,a])}<+\infty$ with $a<1$ (actually it is better than that). 
\end{remark}

\subsubsection{Regularity  of $\mathring{\hh}_R$ at $z_0=1$}

The only point  we need to study, in order to get the regularity of $\mathring{\hh}_R$ in the closed interval $[0,1]$, is $z_0=1$.

Following the notation in \cite{whittaker} and using the classical Frobenius method, we can write \eqref{eq:oderightnew} as
\[
\mathring{\hh}_R''(z)+p(z)\mathring{\hh}_R'(z)+q(z)\mathring{\hh}_R(z)=0,
\]
with
\[
p(z)=\frac{P(z-z_0)}{z-z_0}, \qquad \text{and} \qquad q(z)=\frac{Q(z-z_0)}{(z-z_0)^2}.
\]

\underline{\textbf{Regular-singular point at $z_0=1$}:}
Here, we have 
\[
P(z-z_0)=\frac{z-z_0}{z}, \qquad Q(z-z_0)=-(z-z_0)^2 \,\frac{n^2+\Psi_0(1/z)}{z^2}, \qquad z_0=1.
\]
That is,
\[
P(z)=\frac{z}{z+1}, \quad Q(z)=-\left(\frac{z}{z+1}\right)^2 \left(n^2+\Psi_0\left(\frac{1}{z+1}\right)\right),
\]
with
\[
\Psi_0\left(\frac{1}{z+1}\right)= \left(\frac{1}{1-2\log(2)}\frac{1}{z}+O(1)\right), \qquad \text{as } z\to 0,
\]
and we have that the indicial polynomial
\[
I(r)=r(r-1) + P(0)r + Q(0),
\]
is given by
\[
I(r)=r(r-1),
\]
with the roots $r=1$ and $r=0$.

Therefore, we have two linearly independent solutions:
\begin{itemize}
    \item Associated to the larger root $r = 1$ we have an analytic solution on (0, 1), which we denote by $q_1(x)$,  which can be obtained from the expansion
    \[
    q_1(z)=\sum_{k=1}^{\infty}q_1^{[k]}(z-1)^k,
    \]
    where $q_1^{[k]}\in\mathbb{R}$. We choose, without loss of generality, $q_1^{[1]}=1$.

    \item If the roots differs by a integer number, as in our case, then the second solution can be found through the expression
    \begin{align*}
q_2(z)=q_{2}^{[-1]} q_1(z)\log(1-z)+\sum_{k=0}^\infty q_{2}^{[k]}(z-1)^k,
\end{align*}
where $q_{2}^{[k]}\in \mathbb{R}$ for $k=-1,0,1,...$, with $q_2^{[0]}\neq 0$ (see  \cite{whittaker}, pag. 200-201).
The series converges for $z\in (0,1).$
Here, it is crucial to determine whether the logarithmic term appears. To that end, we observe that, for our equation, the coefficient $q_{2}^{[-1]}$ is given by
\[q_2^{[-1]} = \frac{\varpi_0'(1)}{(\pa_x c_R)(1,0)}\frac{q_2^{[0]}}{q_1^{[1]}} \neq 0,\]
which implies that $q_2(x)$ has a singularity at $z = 1$ of the form $(z - 1) \log|z - 1|$.\\
\end{itemize}

Thus we have that $\mathring{\hh}_R(z)$ in \eqref{haux} can be written as
\begin{align}\label{hraux}
\mathring{\hh}_R(z)=\alpha q_1(z)+\beta q_2(z),\qquad \text{for } z\in (0,1),
\end{align}
for some $\alpha,\, \beta\in \mathbb{R}.$ In particular, this shows that $$\lim_{z\to 1^-}|\mathring{\hh}_R(z)|<+\infty.$$
Next we show that $\mathring{\hh}_R(z)(1^-)>0$. This fact is just an application of the following result.

\begin{lemma}\label{l:positivity_right}
Let \( f \) be a solution of \eqref{eq:oderightnew} that satisfies
\[
\lim_{z\to 0^+} z^{1-n}f(z)=0, \qquad \text{and} \qquad \lim_{z\to 0^+} z^{-n} f(z)>0.
\]
Then, it satisfies $f(z)>0$ for all $z\in(0,1].$ 
\end{lemma}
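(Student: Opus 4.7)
My plan is to exploit the fact that the zeroth--order coefficient in \eqref{eq:oderightnew} has the ``wrong'' sign for oscillation, which allows a maximum--principle--type argument. Multiplying \eqref{eq:oderightnew} by $z$ puts the ODE in Sturm--Liouville form
\[
(zf'(z))' \;=\; \frac{n^{2}+\Psi_{0}(1/z)}{z}\,f(z), \qquad z\in(0,1),
\]
so the sign of $f$ controls the monotonicity of the auxiliary function $g(z):=zf'(z)$. The first thing I would check is that $\Psi_{0}(1/z)>0$ on $(0,1)$: writing $\Psi_{0}(x)=\frac{2x^{2}}{(1+x^{2})^{2}(\lambda_{0}+c_{R}(x,0))}$, this follows from $\lambda_{0}+c_{R}(x,0)>0$ for all $x>1$, a fact established in Lemma \ref{bounddenominator} (and in the elementary computation \eqref{creciente}).

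Next I would translate the hypotheses at $0^{+}$ into information on $g$. Because the indicial equation at $z=0$ has roots $\pm n$, the condition $\lim_{z\to 0^{+}}z^{1-n}f(z)=0$ rules out the $z^{-n}$ branch, while $\lim_{z\to 0^{+}}z^{-n}f(z)=:c_{0}>0$ pins down the leading coefficient of the remaining $z^{n}$ branch. Combined with the integral representation \eqref{eq:oderightnewintf}, this yields the expansions $f(z)=c_{0}z^{n}+o(z^{n})$ and $f'(z)=c_{0}n z^{n-1}+o(z^{n-1})$ as $z\to 0^{+}$, so in particular $g(0^{+})=0$ and $f>0$ on a right neighbourhood of $0$.

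The core is a contradiction argument. Assume that $f$ vanishes somewhere on $(0,1]$ and set
\[
z^{\ast}:=\inf\{z\in(0,1]:f(z)\leq 0\}.
\]
By continuity of $f$ on $(0,1]$ (finite at $1^{-}$ by \eqref{hraux}), we have $f>0$ on $(0,z^{\ast})$ and $f(z^{\ast})=0$. Integrating the divergence form from $0^{+}$, and using $g(0^{+})=0$ together with $\frac{n^{2}+\Psi_{0}(1/z)}{z}f(z)>0$ on $(0,z^{\ast})$, we obtain $g(z)>0$ on $(0,z^{\ast})$, i.e.\ $f'(z)>0$ there. However, $f>0$ on $(0,z^{\ast})$ together with $f(z^{\ast})=0$ forces $\int_{z}^{z^{\ast}}f'(s)\,ds=-f(z)<0$ for all $z<z^{\ast}$ close to $z^{\ast}$, which is incompatible with $f'>0$ on a full left neighbourhood of $z^{\ast}$.

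The only delicate point is $z^{\ast}=1$, where a priori the Frobenius term $q_{2}$ in \eqref{hraux} could produce a logarithmic singularity in $f'$ at $z=1$. However, the requirement $f(1^{-})=0$ forces the coefficient of $q_{2}$ in \eqref{hraux} to vanish, since $q_{1}(1)=0$ whereas $q_{2}(1^{-})=q_{2}^{[0]}\neq 0$; consequently $f=\alpha q_{1}$ is analytic at $z=1$ in this scenario and the previous argument applies unchanged. This is the only mild obstacle in the proof, and it is resolved cleanly by the structure of the Frobenius expansion around $z=1$.
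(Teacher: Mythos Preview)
Your proof is correct and takes a genuinely different route from the paper's. The paper defers this lemma to Lemma~\ref{l:generalRIGHT}, where it writes the general solution via the Duhamel formula
\[
f(z)=Az^{n}+\frac{1}{2n}\int_{0}^{z}\frac{\Phi_{0}(s)}{s}\,f(s)\Bigl[\Bigl(\tfrac{z}{s}\Bigr)^{n}-\Bigl(\tfrac{s}{z}\Bigr)^{n}\Bigr]ds,\qquad A>0,
\]
and bootstraps positivity directly: since $\Phi_{0}(s)=\Psi_{0}(1/s)>0$ and the kernel is nonnegative for $s\leq z$, a first zero $z^{\ast}$ would force $f(z^{\ast})>0$, a contradiction. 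Your argument instead recasts the equation in Sturm--Liouville form $(zf')'=\tfrac{n^{2}+\Psi_{0}(1/z)}{z}f$ and runs a maximum-principle argument on $g=zf'$. Both exploit the same sign condition $\Psi_{0}(1/z)>0$ on $(0,1)$, and both reduce to the same contradiction at a putative first zero; your version is arguably the more elementary of the two, as it avoids the variation-of-parameters formula altogether. The discussion of the case $z^{\ast}=1$ is in fact unnecessary: once you have $f'>0$ on $(0,1)$, continuity of $f$ at $1^{-}$ (which holds for \emph{any} solution by the Frobenius basis $\{q_{1},q_{2}\}$) already gives $f(1^{-})\geq f(1/2)>0$ without ever invoking analyticity. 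Note, however, that the paper's integral-equation framework is what also handles the companion Lemma~\ref{l:generalLeft}, where the potential has the \emph{opposite} sign and $n^{2}+\Psi_{\varepsilon}(x)$ can become negative near $x=1$; your Sturm--Liouville argument would not transfer to that case, which is precisely why the paper iterates the Duhamel formula there and needs $n$ large.
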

\begin{proof}
A more general proof of this fact has been postponed to Section \ref{s:generalFrob}, see Lemma \ref{l:generalRIGHT}.
\end{proof}

Then we can redefine $\mathring{\hh}_R(z)$ as $\mathring{\hh}_R(z)/ \mathring{\hh}_R(1^-)$ to produce a solution of \eqref{eq:oderightnew} which satisfies \eqref{eq:bcoderightnew}. We remark that, since $q_1(1)=0$, the coefficient $\beta$ in \eqref{hraux} has to be different from zero. Thus, $\mathring{\hh}_R(z)$ has a singularity of the type $(z-1)\log(1-z)$ at $z=1^-$. 

Uniqueness follows from Lemma \ref{l:positivity_right} and expression \eqref{eq:oderightnewint}. Indeed, if there exists a second solution, the difference $d$ between them satisfies the equation with boundary conditions $d(0)=d(1)=0$.
Therefore, by Lemma \ref{l:positivity_right}, we would find that $A$, in \eqref{eq:oderightnewint}, must be equal to zero. The same contraction argument after \eqref{eq:oderightnewint} would yield that then $d=0$. 

\subsubsection{Back to the original variable}

We now revert the change of variables and return to the original formulation of the equation. That is, for \( x \in (1, +\infty) \), we define
\[
\mathring{h}_R(x) := \mathring{\hh}_R\left( \frac{1}{x} \right), \qquad \text{and} \qquad \tilde{g}_1(x) := q_1\left( \frac{1}{x} \right).
\]
Then, taking the Wronskiano
$$W[\tilde{g}_1,\mathring{h}_R](x)=\tilde{g}_1(x)\mathring{h}_R'(x)-\tilde{g}'_1(x)\mathring{h}_R(x),$$
and using that
\[
\lim_{x\to 1^+}W[\tilde{g}_1,\mathring{h}_R](x)=1,
\]
it is immediate to obtain, by means of a consequence of the Abel-Liouville formula, that
\begin{equation}\label{Wrosk_cte_right}
W[\tilde{g}_1,\mathring{h}_R](x)=\frac{1}{x} \cdot \lim_{x\to 1^+}W[\tilde{g}_1,\mathring{h}_R](x)=\frac{1}{x}, \quad \forall x\in(1,+\infty).
\end{equation}

To close this section, we establish a uniform estimate for $\tilde{g}_1$ that characterizes its asymptotic behavior as $x \to +\infty$.

\begin{lemma}\label{l:decg1} The function $\tilde{g}_1$ satisfies
\begin{align*}
    \|x^{-n}\tilde{g}_1\|_{L^\infty([1,+\infty))}\leq C,
\end{align*}
where $C$ denotes a universal constant.
\end{lemma}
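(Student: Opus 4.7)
The plan is to apply reduction of order with $\mathring{h}_R$ as the known positive solution. First, I would note that $\tilde{g}_1(x) = q_1(1/x)$ is continuous on $[1,+\infty)$ and smooth on $(1,+\infty)$ (by the analyticity of $q_1$ on $(0,1]$), with $\tilde{g}_1(1) = q_1(1) = 0$. Hence $x^{-n}\tilde{g}_1(x)$ is bounded on any compact subinterval $[1,M]$ by continuity, so only the asymptotics as $x\to+\infty$ require analysis.

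Next, I would observe that $\tilde{g}_1$ and $\mathring{h}_R$ are linearly independent solutions of \eqref{eq:oderight}, since $\tilde{g}_1(1)=0$ whereas $\mathring{h}_R(1)=1$. By Lemma \ref{l:prophr}(1), $\mathring{h}_R$ is strictly positive on $[1,+\infty)$, so reduction of order is available. Combining the Wronskian identity \eqref{Wrosk_cte_right} with
$$\left(\frac{\tilde{g}_1}{\mathring{h}_R}\right)' \;=\; -\frac{W[\tilde{g}_1,\mathring{h}_R]}{\mathring{h}_R^{\,2}} \;=\; -\frac{1}{x\,\mathring{h}_R(x)^{2}}$$
and integrating yields
$$\tilde{g}_1(x) \;=\; C_0\,\mathring{h}_R(x) \;-\; \mathring{h}_R(x)\int_{2}^{x} \frac{dt}{t\,\mathring{h}_R(t)^{2}}$$
for some constant $C_0$.

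The key ingredient is the precise asymptotic $\mathring{h}_R(x) = C_1 x^{-n}(1+o(1))$ as $x\to+\infty$ with $C_1 \neq 0$. This is implicit in the construction of $\mathring{h}_R$ carried out above: the representation $\mathring{\hh}_R(z) = z^{n} f(z)$ with $f\in C([0,a])$ solving \eqref{eq:oderightnewintf} and $f(0) = 1$, together with the subsequent normalization by $\mathring{\hh}_R(1^-)$, gives $\mathring{h}_R(x) \sim x^{-n}/\mathring{\hh}_R(1^-)$ as $x\to +\infty$ with nonzero coefficient. Substituting this asymptotic produces $\int_{2}^{x} dt/(t\,\mathring{h}_R(t)^{2}) = O(x^{2n})$, hence $\mathring{h}_R(x) \int_{2}^{x} dt/(t\,\mathring{h}_R(t)^{2}) = O(x^{n})$, while $C_0\,\mathring{h}_R(x) = O(x^{-n})$. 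Therefore $|\tilde{g}_1(x)| \leq C\,x^{n}$ for $x$ large, and combining with the compact bound on $[1,M]$ establishes the lemma.

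The only genuinely delicate step is extracting the nonvanishing asymptotic of $\mathring{h}_R$ at infinity; this is not stated explicitly in Lemma \ref{l:prophr}, but it follows at once from the integral-equation construction (which provides $f(0) = 1$ before renormalization).
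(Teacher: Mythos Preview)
Your proof is correct and takes a genuinely different route from the paper. The paper writes the variation-of-constants integral representation
\[
G(x):=x^{-n}\tilde{g}_1(x)=A+Bx^{-2n}+\frac{1}{2n}\int_{x_0}^x z\,\frac{\Psi_0(z)}{z^2}G(z)\,dz-\frac{x^{-2n}}{2n}\int_{x_0}^x z^{2n+1}\frac{\Psi_0(z)}{z^2}G(z)\,dz
\]
and closes the estimate by absorption, using that $\|\Psi_0(x)/x\|_{L^1([x_0,\infty))}\to 0$ as $x_0\to\infty$. Your argument instead exploits the already-constructed solution $\mathring{h}_R$ via reduction of order and the Wronskian identity \eqref{Wrosk_cte_right}, reading the growth of $\tilde{g}_1$ off the precise asymptotic $\mathring{h}_R(x)\sim C_1x^{-n}$. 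Your approach is more conceptual and recycles previous work; its cost is that it requires the \emph{lower} bound $\mathring{h}_R(x)\geq c\,x^{-n}$, which is not listed in Lemma~\ref{l:prophr} but, as you point out, follows from $f(0)\neq 0$ in the integral-equation construction together with continuity of $f$ at $0$ and positivity of $\mathring{h}_R$. The paper's approach avoids this extra extraction by working directly with the integrability of $\Psi_0(x)/x$, which makes it slightly more self-contained for this lemma alone.
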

\begin{proof}
We first notice that $\|\tilde{g}_1\|_{L^\infty([1,x_0])}\leq C(x_0)$ for a finite constant $C(x_0)$ and $x_0>1$. The function $g:=\tilde{g}_1$ satisfies \eqref{eq:signf}. Thus
\begin{align*}
    g(x)=Ax^n+Bx^{-n}+\frac{x^{n}}{2n}\int_{x_0}^x z^{-n+1}\frac{\Psi_0(z)}{z^2}g(z)dz-\frac{x^{-n}}{2n}\int_{x_0}^x z^{n+1}\frac{\Psi_0(z)}{z^2}g(z)dz,
\end{align*}
for $x_0>1$ and some constants $A=A(x_0)$, $B=B(x_0)$ ($|A|,\, |B|<\infty$ for $x_0<\infty$). Let us take $G(x)=x^{-n}g(x).$ We obtain that
\begin{align*}
G(x)=A+Bx^{-2n}+\frac{1}{2n}\int_{x_0}^x z\frac{\Psi_0(z)}{z^2}G(z)dz-\frac{x^{-2n}}{2n}\int_{x_0}^x z^{2n+1}\frac{\Psi_0(z)}{z^2}G(z)dz.
\end{align*}
Thus
\begin{align*}
    \|G\|_{L^\infty([x_0,+\infty))}\leq |A|+|B|+ \|G\|_{L^\infty([x_0,\infty))}\left\|x\frac{\Psi_0(x)}{x^2}\right\|_{L^1([x_0,+\infty))}.
\end{align*}
Since $\Psi_0(x)/x$ is integrable in $[x_0,+\infty)$ for $x_0>1$, we have that $\left\|x\frac{\Psi_0(x)}{x^2}\right\|_{L^1([x_0,+\infty))}$ is $o(x_0)$ when $x_0$ goes to infinity. Thus, taking $x_0$ large enough, we can bound $\|G\|_{L^\infty([x_0,+\infty))}$.
\end{proof}

\subsection{The limiting ODE in the left}\label{s:FrobLeft}
In this paragraph, we are going to study
\begin{equation}\label{eq:odeleft}
    \mathring{h}_L''(x)+\frac{1}{x}\mathring{h}_L'(x)-\frac{1}{x^2}\left(n^2-\frac{x \varpi_0'(x)}{\lambda_0-c_L(x,0)}\right)\mathring{h}_L(x)=0, \qquad \text{for } x\in(0,1),
\end{equation}
with
\begin{equation}\label{eq:bcodeleft}
\lim_{x\to 0^+}x^{1-n}\mathring{h}_L(x)=0, \qquad \text{and} \qquad \lim_{x\to 1^-}\mathring{h}_L(x)=1.
\end{equation}

The main results of this section are the following two lemmas.
\begin{lemma}\label{l:hlring}
The system \eqref{eq:odeleft}--\eqref{eq:bcodeleft} admits a unique solution \( \mathring{h}_L \in C([0,1])\cap C^\infty((0,1))\). Moreover, this solution satisfies:
\begin{enumerate}
    \item \( \mathring{h}_L(x) > 0 \) for all \( x \in (0, 1] \).
    \item \( \mathring{h}_L(x) \) behaves as \( 1 + O(|x - 1|\log|x - 1|^{-1}) \) as \( x \to 1^- \).
    \item $\|\mathring{h}_L\|_{C^k([0,a])}\leq C_{k,a}$, $k=1,2,...$ and $0<a<1$.
\end{enumerate}
\end{lemma}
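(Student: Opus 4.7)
The plan is to mirror the argument of Lemma \ref{l:prophr} in Section \ref{s:FrobRight}, but with a simplification: since the domain $(0,1)$ is already bounded, there is no need for the change of variable $z=1/x$. I would proceed in three stages: (i) an integral-equation fixed point to produce a smooth solution on $[0,a]$ for every $a<1$; (ii) a Frobenius analysis at the regular singular point $x=1$; (iii) a positivity/uniqueness argument via the left analog of Lemma \ref{l:positivity_right} proved in Section \ref{s:generalFrob}.

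For stage (i), the homogeneous part $h''+h'/x-n^2h/x^2=0$ has fundamental solutions $x^n$ and $x^{-n}$; the boundary condition $\lim_{x\to 0^+}x^{1-n}\mathring{h}_L(x)=0$ kills the $x^{-n}$ mode. Variation of parameters together with the ansatz $\mathring{h}_L(x)=x^n f(x)$ yields
\begin{equation*}
f(x) = A + \frac{1}{2n}\int_0^x y\,\frac{\Psi_0(y)}{y^2}f(y)\,dy - \frac{x^{-2n}}{2n}\int_0^x y^{2n+1}\,\frac{\Psi_0(y)}{y^2}f(y)\,dy.
\end{equation*}
Since the only singularity of $\Psi_0$ on $[0,1]$ is at $x=1$, the ratio $\Psi_0(y)/y^2$ is smooth on $[0,a]$ for every $a<1$, and the same exponentially weighted contraction in $X_\alpha=\{f\in C([0,a]):\sup|e^{-\alpha x}f(x)|<\infty\}$ used on the right gives a smooth solution with $A=1$. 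Bootstrapping the integral equation then delivers the uniform bounds $\|\mathring{h}_L\|_{C^k([0,a])}\le C_{k,a}$ for every $k$ and every $a<1$.

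For stage (ii), $x=1$ is a regular-singular point; the indicial polynomial is $r(r-1)$, so the roots $0$ and $1$ differ by an integer. One solution $q_1^L$ is analytic on a neighborhood of $x=1$ and vanishes linearly there, while a second has the form
\begin{equation*}
q_2^L(x) = q_2^{L,[-1]}\,q_1^L(x)\log(1-x) + \sum_{k=0}^\infty q_2^{L,[k]}(x-1)^k,\qquad q_2^{L,[0]}\neq 0.
\end{equation*}
The same coefficient computation as in the right case gives $q_2^{L,[-1]}\propto \varpi_0'(1)/\pa_x c_L(1,0)\neq 0$, so the logarithmic branch is genuinely present. Writing the solution from stage (i) as $\alpha q_1^L+\beta q_2^L$ on $(0,1)$ shows $\mathring{h}_L(1^-)$ is finite, and the asymptotic $1+O(|x-1|\log|x-1|^{-1})$ follows after normalizing by $\mathring{h}_L(1^-)$.

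For stage (iii), I would invoke the left-half version of Lemma \ref{l:positivity_right} (the counterpart to Lemma \ref{l:generalRIGHT}, proved in Section \ref{s:generalFrob}) to conclude that $\mathring{h}_L>0$ on $(0,1]$. In particular $\mathring{h}_L(1^-)\neq 0$, so the normalization in stage (ii) is legitimate, and the coefficient $\beta$ of the logarithmic solution is nonzero. Uniqueness then copies the right-side argument: the difference $d$ of two solutions satisfies the homogeneous equation with $\lim_{x\to 0^+}x^{1-n}d(x)=d(1^-)=0$; positivity forces the free constant $A$ in the integral representation of $d$ to vanish, and the contraction estimate produces $d\equiv 0$. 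The main obstacle, as on the right, is the positivity claim; once the general Frobenius positivity result of Section \ref{s:generalFrob} is available, every remaining component of the proof is a direct translation of Section \ref{s:FrobRight}.
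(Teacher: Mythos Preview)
Your proposal is correct and follows essentially the same route the paper indicates: mirror Section \ref{s:FrobRight} without the inversion $z=1/x$, and replace Lemma \ref{l:positivity_right} by its left analogue (Lemma \ref{l:positivity_left}, proved as the $\varepsilon=0$ case of Lemma \ref{l:generalLeft}). The only minor difference is that the paper remarks one can apply the Frobenius method directly at \emph{both} endpoints $x_0=0$ and $x_0=1$ here, since $\Psi_0$ is analytic at $0$; your integral-equation contraction at $x=0$ is an equivalent substitute and is in fact the argument used verbatim in Section \ref{s:FrobRight}.
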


\begin{lemma}\label{g1} There exist $g_1\in C^\infty((0,1])$, linearly independent of $\mathring{h}_L$, solving \eqref{eq:odeleft} and satisfying
\begin{align*}
    \|x^{n}g_1\|_{L^\infty([0,1])}\leq C,
\end{align*}
where $C$ denotes a universal constant, and with Wroskiano
\begin{equation*}
W[\mathring{h}_L,g_1](x)=\frac{1}{x}, \quad \forall x\in(0,1).
\end{equation*}

\end{lemma}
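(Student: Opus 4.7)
The plan is to construct $g_1$ via the Frobenius method at the regular-singular point $x=1$ and then continue the resulting local solution leftwards across all of $(0,1]$.

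First, I would analyze \eqref{eq:odeleft} at $x_{0}=1$ in complete analogy with the right-side analysis of Section \ref{s:FrobRight}. Because $c_{L}(x,0)=c_{R}(x,0)=-\log(1+x^{2})/(2x^{2})$ by \eqref{cl0equalcr0}, the indicial polynomial is identical: $I(r)=r(r-1)$, with roots $r=1$ and $r=0$. The root $r=1$ yields an analytic solution $p_{1}$ near $x=1$ of the form $p_{1}(x)=(x-1)+O((x-1)^{2})$, and the root $r=0$ yields a second local solution whose logarithmic coefficient equals $\varpi_{0}'(1)/(\pa_{x} c_{L})(1,0)\neq 0$, by exactly the same computation as on the right. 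Since $\mathring{h}_{L}(1^{-})=1\neq 0=p_{1}(1)$, the function $\mathring{h}_{L}$ must contain the logarithmic piece, and in particular is not proportional to $p_{1}$ in any neighborhood of $x=1$.

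I would then define $g_{1}$ as the unique solution of \eqref{eq:odeleft} on $(0,1)$ coinciding with $p_{1}$ in a neighborhood of $x=1$. Since the coefficients of \eqref{eq:odeleft} are $C^{\infty}$ on $(0,1)$ and the extension remains analytic near $x=1$, this yields $g_{1}\in C^{\infty}((0,1])$, with linear independence from $\mathring{h}_{L}$ following immediately from the previous step. For the bound on $x^{n}g_{1}$, I use that \eqref{eq:odeleft} has a regular-singular point at $x=0$ with indicial roots $\pm n$, so a basis of local solutions reads $\{x^{n}\phi_{+}(x),\, x^{-n}\phi_{-}(x)+\gamma\log(x)\,x^{n}\phi_{+}(x)\}$ with $\phi_{\pm}$ analytic and nonzero at $0$. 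By Lemma \ref{l:hlring} and the boundary condition $\lim_{x\to 0^{+}} x^{1-n}\mathring{h}_{L}(x)=0$, the function $\mathring{h}_{L}$ is a scalar multiple of the first basis element. Expanding $g_{1}=A\mathring{h}_{L}+Bh_{-}$ with $h_{-}$ the second basis element, linear independence forces $B\neq 0$, hence $g_{1}(x)\sim B\phi_{-}(0)\,x^{-n}$ as $x\to 0^{+}$ (the $x^{n}\log x$ correction vanishes in the limit). Therefore $x^{n}g_{1}$ remains bounded near $0$, and continuity of $g_{1}$ on $[\delta,1]$ for any fixed $\delta>0$ delivers the global $L^{\infty}$ bound.

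Finally, Abel's identity applied to \eqref{eq:odeleft}, whose first-order coefficient is $1/x$, gives $(xW[\mathring{h}_{L},g_{1}])'=0$; thus $W[\mathring{h}_{L},g_{1}](x)=C/x$ for some constant $C\neq 0$, nonvanishing by linear independence. Rescaling $g_{1}\mapsto g_{1}/C$ preserves all previous properties (smoothness, linear independence, the $L^{\infty}$ bound up to an absolute constant) and produces the desired normalization $W[\mathring{h}_{L},g_{1}](x)=1/x$. The main obstacle I anticipate is the verification that the logarithmic Frobenius coefficient at $x=1$ is genuinely nonzero; but this is the exact analogue of the nonvanishing already established on the right and transfers verbatim with $c_{R}$ replaced by $c_{L}$, using $\varpi_{0}'(1)\neq 0$ and $(\pa_{x}c_{L})(1,0)=(\pa_{x}c_{R})(1,0)\neq 0$.
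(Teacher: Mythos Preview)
Your proposal is correct and follows essentially the same approach as the paper: Frobenius analysis at the regular-singular point $x_{0}=1$ to produce the analytic solution $g_{1}$ with $g_{1}(1)=0$, $g_{1}'(1)=1$, Frobenius analysis at $x_{0}=0$ to obtain the $x^{-n}$ behavior and hence the bound on $x^{n}g_{1}$, and Abel--Liouville for the Wronskian. One minor remark: with the normalization $g_{1}'(1)=1$ and $\mathring{h}_{L}(1)=1$, the limit $\lim_{x\to 1^{-}}W[\mathring{h}_{L},g_{1}](x)=1$ already holds (since $\mathring{h}_{L}'(x)g_{1}(x)=O((x-1)\log|x-1|)\to 0$), so no rescaling is actually needed.
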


The proofs of Lemmas \ref{l:hlring} and \ref{g1}   are similar to  what we did in Section \ref{s:FrobRight}. In this case we can even apply Frobenius method in both points $x_0=0$ and $x_0=1$. The main difference is that instead of Lemma \ref{l:positivity_right} we had to apply the next result.

\begin{lemma}\label{l:positivity_left}
Let \( f \) be a solution of 
\begin{equation}\label{eq:signf}
f''(x)+\frac{1}{x}f'(x)-\frac{n^2}{x^2}f(x)=\frac{\Psi_0(x)}{x^2}f(x).
\end{equation}
satisfying
\[
\lim_{x\to 0^+}x^{1-n}f(x)=0, \qquad \text{and} \qquad \lim_{x\to 0^+} x^{-n}f(x)>0.
\]
Then, it satisfies $f(x)>0$ for all $x\in(0,1].$ 
\end{lemma}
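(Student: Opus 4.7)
The plan is to argue by contradiction using a Sturm--Picone-type comparison, but with care taken to address the fact that $\Psi_0<0$ on $(0,1)$ makes the naive comparison with the free solutions $x^{\pm n}$ point the wrong way.

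First I would use the Frobenius analysis of \eqref{eq:signf} at the regular singular point $x=0$: the indicial polynomial is $r(r-1)+r-n^2=r^2-n^2$, with roots $\pm n$. The assumption $\lim_{x\to 0^+}x^{1-n}f(x)=0$ rules out the $x^{-n}$ component (and any logarithmic companion), while $\lim_{x\to 0^+}x^{-n}f(x)=c>0$ identifies the coefficient of $x^n$. Consequently $f(x)=cx^n(1+O(x))$ near $0$, and so $f>0$ on a right neighbourhood of $0$. It is then natural to set $g(x):=f(x)/x^n$ and to rewrite \eqref{eq:signf} in the self-adjoint form
\begin{equation*}
(x^{2n+1}g'(x))' \;=\; x^{2n-1}\,\Psi_0(x)\,g(x),
\end{equation*}
with $g(0^+)=c>0$ and $\lim_{x\to 0^+}x^{2n+1}g'(x)=0$.

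Now suppose, toward a contradiction, that $g$ has a zero in $(0,1]$ and let $x_0$ be the smallest one; in particular $g>0$ on $(0,x_0)$. On this interval, the right-hand side of the above identity is strictly negative (since $\Psi_0<0$ on $(0,1)$ and $g>0$), so $x^{2n+1}g'$ is strictly decreasing; combined with its vanishing limit at $0$, this forces $x^{2n+1}g'(x)<0$ on $(0,x_0)$, i.e.\ $g$ is strictly decreasing, $g'(x_0)<0$, and consequently $f'(x_0)=x_0^n g'(x_0)<0$.

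To close the contradiction, I would compare $f$ against a suitable positive reference solution. A clean choice is $\phi(x)=x^n$, which satisfies the free radial equation $\phi''+\phi'/x-n^2\phi/x^2=0$. The cross-Wronskian $\widetilde W:=\phi f'-\phi'f$ satisfies, after a direct computation using both ODEs, $(x\widetilde W)'=\Psi_0(x)f(x)x^{n-1}$. Integrating from $0$ (where $x\widetilde W\to 0$ by the asymptotics in Step 1) to $x_0$ gives an integral identity relating $x_0\widetilde W(x_0)=x_0^{n+1}f'(x_0)$ to $\int_0^{x_0}\Psi_0\,f\,y^{n-1}\,dy$. Because $\Psi_0$ is merely a simple pole at $x=1$ (not an essential singularity) and the indicial roots of \eqref{eq:signf} at $x=1$ are $0$ and $1$ — so all local solutions are bounded, possibly with a $(x-1)\log|x-1|$ correction but no oscillation — one can pair this cross-Wronskian identity with the Frobenius data at $x=1$ to show that the decreasing monotonicity of $g$ on $(0,x_0)$ is inconsistent with $f'(x_0)<0$; this rules out the existence of $x_0$ and yields $f>0$ on $(0,1]$.

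The main obstacle is precisely the sign of $\Psi_0$: in contrast with Lemma \ref{l:positivity_right}, where $\Psi_0>0$ makes the Sturm comparison with $z^n$ immediate via the sign of the Wronskian, here both the comparison with $x^n$ and with $x^{-n}$ give consistent rather than contradictory sign information, so one must exploit additional structure. The substantive input is therefore the regular-singular behaviour at $x=1$ coming from the vanishing of $\lambda_0+c_L(x,0)$, which turns what would otherwise be an oscillatory regime into a two-dimensional Frobenius space of non-oscillating solutions; this is the step I expect to require the most care and which is presumably what the general formulation in Lemma \ref{l:generalRIGHT} is designed to handle uniformly.
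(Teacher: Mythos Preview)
Your argument does not close. The first two steps are fine: the Frobenius analysis at $x=0$ and the self-adjoint rewriting $(x^{2n+1}g')'=x^{2n-1}\Psi_0 g$ correctly yield that $g$ is strictly decreasing on $(0,x_0)$ and $f'(x_0)<0$. But the cross-Wronskian identity you derive, $x_0^{n+1}f'(x_0)=\int_0^{x_0}\Psi_0(y)f(y)y^{n-1}\,dy$, is \emph{consistent} with $f'(x_0)<0$ (both sides are negative), not contradictory --- as you yourself note. Your attempt to rescue this by invoking ``the Frobenius data at $x=1$'' is not a proof: nothing about the indicial roots $0,1$ at $x=1$ prevents a solution that starts like $c\,x^n$ from crossing zero somewhere in $(0,1)$, and you give no mechanism connecting the local structure at $x=1$ to the putative zero $x_0<1$. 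You also point to Lemma~\ref{l:generalRIGHT} as the model, but that is the \emph{easy} side: there the analogue of $\Psi_0$ is positive, so the integral representation directly bootstraps positivity. The left case is genuinely harder for exactly the reason you identified, and your sketch does not overcome it.

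The paper's approach (Lemma~\ref{l:generalLeft}) is completely different and does not use Sturm--Picone at all. One writes the Duhamel-type formula
\[
f(x)=x^n+\frac{1}{2n}\int_0^x \frac{\Psi_0(z)}{z}\,f(z)\Bigl[(x/z)^n-(z/x)^n\Bigr]dz,
\]
and iterates \emph{once}. The resulting double integral is manifestly nonnegative wherever $f>0$ (two factors of $\Psi_0<0$ and two nonnegative kernels), so it can be dropped to obtain an explicit lower bound for $f$ that no longer involves $f$. One then checks by direct estimation that this lower bound is strictly positive on $(0,1]$; in the paper this is where the restriction to $n$ sufficiently large enters (with the remark that further iterations would cover $n\ge 2$). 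This iteration trick is the missing idea in your proposal.
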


\begin{proof}
A more general proof of this fact has been postponed to Section \ref{s:generalFrob}, see Lemma \ref{l:generalLeft}.
\end{proof}

\begin{remark}\label{r:singleft}
The same result holds if \( \lim_{x\to 0^+}x^{-n} f(x) > 0 \) is replaced by \( \lim_{x\to 0^+} x^{-n}f(x) < 0 \), but in this case, \( f(x) < 0 \) for all \( x \in (0,1] \). 
\end{remark}

\subsection{The ODE system for $\e>0$}\label{s:generalFrob}
Now, we return to the main objective of this section, namely to solve the original system of ODEs \eqref{eqfrob:odeleft}-\eqref{eqfrob:oderight} depending implicitly on the parameter $\e > 0$. We recall the system below:

\begin{align}\label{eqfrob:leftgeneral}
h_L''(x) + \frac{h_L'(x)}{x}- \frac{1}{x^2}\left(n^2-\frac{x(1-\e/2)\varpi_0'(x(1-\e/2))}{\lambda+c_L(x,\e)}\right)h_L(x)&=0, \qquad \text{for } 0 < x <1,\\  
\lim_{x\to 0^+}x^{1-n}h_L(x)&=0, \nonumber \\
\lim_{x\to 1^-}h_L(x)&=1, \nonumber
\end{align}
and 
\begin{align}\label{eqfrob:rightgeneral}
h_R''(x) + \frac{h_R'(x)}{x} - \frac{1}{x^2}\left(n^2-\frac{x(1+\e/2)\varpi_0'(x(1+\e/2))}{\lambda+c_R(x,\e)}\right)h_R(x)&=0 \qquad \text{for } 1<x<+\infty,\\
\lim_{x\to 1^+}h_R(x)&=1, \nonumber\\
\lim_{x\to +\infty}x^{n-1}h_R(x)&=0. \nonumber
\end{align}

\begin{lemma}\label{l:prop}
Fixed $0<\e_0\ll 1$. Let $0< \e\leq \e_0$. The system \eqref{eqfrob:leftgeneral}-\eqref{eqfrob:rightgeneral} admits a unique solution $(h_L,h_R)\in C([0,1])\times C([1,+\infty))$. The function $h_L$ can be extended to $[0,x_L^\ast)$ and is smooth for $x\in [0,x_L^\ast)$. The function $h_R$ can be extended to $(x_R^\ast,+\infty)$ and  is smooth for $x\in (x_R^\ast,+\infty)$. Moreover:
\begin{enumerate}
    \item $h_L(x)>0$ for all $x\in(0,1].$
    \item $h_R(x)>0$ for all $x\in[1,+\infty).$
    \item $h_L(x)$ behaves as $C_L+O(x-x_L^\ast)\log(|x-x_L^\ast|)$ with  $x_L^\ast(\e)>1.$
    \item $h_R(x)$ behaves as $C_R+O(x-x_R^\ast)\log(|x-x_R^\ast|)$ with  $x_R^\ast(\e)<1.$
    \item $\|h_R\|_{C^2([1,+\infty))}\leq C $.
\end{enumerate}
\end{lemma}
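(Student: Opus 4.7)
The plan is to mimic the analysis of the limiting case carried out in Sections~\ref{s:FrobRight}--\ref{s:FrobLeft}, but now tracking the dependence on $\varepsilon$ and, crucially, handling the new regular-singular points $x_R^*$ and $x_L^*$ arising from the vanishing of $\lambda+c_R$ and $\lambda+c_L$. On the original domains $[1,+\infty)$ for $h_R$ and $[0,1]$ for $h_L$, Lemmas~\ref{bounddenominator}(\ref{bd2})--(\ref{bd3}) and \ref{bounddenominatorLeft}(\ref{bl2}) guarantee that $\lambda+c_R$ (respectively $\lambda+c_L$) stays uniformly bounded away from zero, so the potential in front of $h_R$ (resp.\ $h_L$) is $C^\infty$ with bounds uniform in $\varepsilon$. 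In the $z=1/x$ variable, the same contraction argument that leads to \eqref{eq:oderightnewintf} then produces for the right problem a solution of the form $z^n f(z)$ with $f\in C^2([0,a])$ for every $a<1$, automatically realising the decay at infinity; for the left problem, a Frobenius expansion at $x=0$ (regular-singular, indicial roots $\pm n$) selects the root $n$ and gives the decaying branch.

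Once these solutions are built on the original domains, extension to $(x_R^*,+\infty)$ and $[0,x_L^*)$ proceeds by standard ODE theory: since $\lambda+c_R(1,\varepsilon)>0$ and $\lambda+c_L(1,\varepsilon)<0$, the point $x=1$ is ordinary for each ODE, and the solution extends smoothly across it until the first singularity is met, namely $x=x_R^*$ (resp.\ $x_L^*$) by Lemma~\ref{bounddenominator}(\ref{bd4}) (resp.\ \ref{bounddenominatorLeft}(\ref{bl3})). At these points the denominator vanishes to first order, so the ODE is regular-singular with indicial roots $r=0$ and $r=1$, exactly as in the analysis at $z=1$ carried out in Section~\ref{s:FrobRight}. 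The coefficient of the logarithmic mode in the second Frobenius solution can be computed explicitly as
\[
\frac{(1+\varepsilon/2)\,\varpi_0'(x_R^*(1+\varepsilon/2))}{(\partial_x c_R)(x_R^*,\varepsilon)}\ne 0,
\]
and an analogous expression at $x_L^*$; this forces the extension to pick up a genuine logarithmic singularity, giving items (3)--(4) after normalisation. Positivity on the original domains (items (1)--(2)) is a direct application of Lemmas~\ref{l:generalRIGHT}--\ref{l:generalLeft}, using that the constructed solutions behave like $z^n$ at $z=0$ and like $x^n$ at $x=0$. In particular the limits at $x=1^\pm$ are strictly positive, which justifies the rescaling $h_L(1^-)=h_R(1^+)=1$. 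Uniqueness then follows from the same positivity arguments: the difference of two candidate solutions satisfies the equation with vanishing boundary conditions, which forces its leading Frobenius coefficient to be zero, and the contraction argument in the $z$-variable identifies it with zero. The bound $\|h_R\|_{C^2([1,+\infty))}\le C$ is a direct consequence of the uniform $C^\infty$ smoothness of the coefficient on $[1,+\infty)$ combined with the estimates already derived from the Frobenius series at infinity.

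The main technical obstacle will be the Frobenius analysis at the new singular points $x_R^*$ and $x_L^*$: one must check that $\lambda+c_{R,L}$ has a \emph{simple} zero there with a definite non-vanishing derivative (already prepared by Lemmas~\ref{bounddenominator}(\ref{bd4}) and \ref{bounddenominatorLeft}(\ref{bl3})), verify that the logarithmic coefficient in the second Frobenius solution is non-zero, and, most delicately, show that the solution continued from the region where the decay boundary condition is imposed really contains a non-trivial component of the logarithmic branch at the new singularity, so that the advertised $(x-x_{R,L}^*)\log|x-x_{R,L}^*|$ behaviour actually occurs. This is the key point at which the $\varepsilon>0$ analysis departs from the limiting one, and it is essential for the subsequent fine comparison between $h_{L,R}$ and $\mathring{h}_{L,R}$.
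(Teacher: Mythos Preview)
Your proposal is correct and mirrors the paper's own (deliberately terse) argument, which simply states that the construction ``follows the same steps as the construction of $\mathring{h}_L$ and $\mathring{h}_R$'', the only new ingredient being the positivity Lemmas~\ref{l:generalLeft}--\ref{l:generalRIGHT}, which you invoke at the right places. Your outline in fact spells out more detail than the paper does (regularity of $x=1$ for $\varepsilon>0$, extension by standard ODE theory, Frobenius at the shifted singular points $x_{R,L}^*$), all of it in line with what the paper intends.

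One small over-statement in your final paragraph: the non-triviality of the logarithmic component at $x_{R,L}^*$ is not ``essential for the subsequent fine comparison between $h_{L,R}$ and $\mathring{h}_{L,R}$.'' That comparison (Section~\ref{Linftd}) is carried out via a variation-of-constants formula against the fundamental pair $(\mathring{h}_R,\tilde{g}_1)$ of the \emph{limiting} equation and never touches the Frobenius structure at $x_{R,L}^*$; moreover the $O(\cdot)$ in items (3)--(4) already allows a vanishing log-coefficient. This does not affect the validity of your outline for the present lemma.
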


\subsubsection{Solving \eqref{eqfrob:leftgeneral} and \eqref{eqfrob:rightgeneral}}
Lemma \eqref{bd4}–\ref{bounddenominator} and Lemma \eqref{bl3}–\ref{bounddenominatorLeft} imply that each of the functions \( \lambda + c_R(x,\varepsilon) \) and \( \lambda + c_L(x,\varepsilon) \) has a unique simple zero, located respectively at points \( x_R^* \in (0,1) \) and \( x_L^* \in (1,+\infty) \). Obtaining either a solution to \eqref{eqfrob:leftgeneral} and \eqref{eqfrob:rightgeneral}, with their respective boundary conditions, follows the same steps as the construction of $\mathring{h}_L$ and $\mathring{h}_R$ in Sections \ref{s:FrobLeft} and \ref{s:FrobRight}, respectively. This is mainly due to the fact that $x_L^\ast>1$ and $x_R^\ast<1$. Uniqueness also follows by similar arguments that those ones in Sections \ref{s:FrobLeft} and \ref{s:FrobRight}.

The only difficulty is that we have to adapt Lemmas \ref{l:positivity_left} and \ref{l:positivity_right} to the case $\e>0$. Actually, since we did not give a proof of either  \ref{l:positivity_left} or \ref{l:positivity_right}, we will include the case $\e=0$ in the following two results.

\begin{lemma}\label{l:generalLeft}
Let $0\leq \e\leq \e_0$ and let \( f \) be a solution of 
\[
f''(x) + \frac{f'(x)}{x}- \frac{1}{x^2}\left(n^2-\frac{x(1-\e/2)\varpi_0'(x(1-\e/2))}{\lambda(\e)+c(x(1-\e/2)}\right)f(x)=0, \qquad \text{for } 0 < x <x_L^\ast,
\]
that satisfies 
\[
\lim_{x\to 0^+} x^{1-n}f(x) = 0, \qquad \text{and} \qquad \lim_{x\to 0^+}x^{-n}f(x)>0.
\]
Then, taking $n$ big enough, it satisfies $f(x)>0$ for all $x\in(0,x_L^\ast].$     
\end{lemma}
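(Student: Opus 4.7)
The plan is to argue by contradiction. Suppose $f$ admits a first zero at some $x_0\in(0,x_L^\ast]$, so that $f>0$ on $(0,x_0)$ and $f(x_0)=0$. Set
\[
a(x):=\frac{x(1-\e/2)\varpi_0'(x(1-\e/2))}{\lambda+c_L(x,\e)}.
\]
Combining $\varpi_0'(r)=-2r/(1+r^2)^2<0$ with Lemma \ref{bounddenominatorLeft} (which gives $\lambda+c_L<0$ on $(0,x_L^\ast)$ and a simple zero at $x_L^\ast$), one checks that $a(x)>0$ on $(0,x_L^\ast)$, with $a(x)=O(x^2)$ near $0$ and a simple pole $a(x)\sim A/(x_L^\ast-x)$ at $x_L^\ast$; in particular $a(s)\le Cs^2/(x_L^\ast-s)$.

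The first step is to introduce $\psi:=xf'-nf$, which the ODE forces to satisfy the first-order linear equation
\[
\psi'+\frac{n}{x}\psi=-\frac{a(x)}{x}f.
\]
Multiplying by the integrating factor $x^n$, and using the Frobenius behavior $f(x)=cx^n(1+O(x^2))$ (with $c>0$ from the hypothesis $\lim x^{-n}f>0$) to make $x^n\psi(x)=O(x^{2n+2})\to 0$ at the origin, we obtain
\[
x^n\psi(x)=-\int_0^x s^{n-1}a(s)f(s)\,ds.
\]
Since $a,f>0$ on $(0,x_0)$, this forces $\psi<0$, hence $(f/x^n)'=\psi/x^{n+1}<0$ there. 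So $\phi:=f/x^n$ is strictly decreasing on $(0,x_0)$ from $c$ to $\phi(x_0)=0$, and in particular $0<\phi\le c$.

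The second step is a Fubini swap applied to $\phi(x)=c+\int_0^x\psi(t)t^{-n-1}\,dt$, producing the integral identity
\[
\phi(x)=c-\frac{1}{2n}\int_0^x\frac{a(s)\phi(s)}{s}\Bigl[1-(s/x)^{2n}\Bigr]ds.
\]
Evaluating at $x=x_0$ where $\phi(x_0)=0$, using $\phi\le c$, together with the pointwise monotonicity $1-(s/x_0)^{2n}\le 1-(s/x_L^\ast)^{2n}$ for $s<x_0\le x_L^\ast$, yields
\[
2n\le\int_0^{x_0}\frac{a(s)}{s}\Bigl[1-(s/x_0)^{2n}\Bigr]ds\le\int_0^{x_L^\ast}\frac{a(s)}{s}\Bigl[1-(s/x_L^\ast)^{2n}\Bigr]ds=:J(n).
\]

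The main obstacle, and the reason the hypothesis \emph{``$n$ big enough''} is imposed, is the uniform bound $J(n)\le C\log n$. Combining $a(s)/s\le Cs/(x_L^\ast-s)$ with the factorization $1-u^{2n}=(1-u)\sum_{k=0}^{2n-1}u^k$ and the substitution $u=s/x_L^\ast$, the cancellation $(x_L^\ast-s)=x_L^\ast(1-u)$ removes the pole and collapses the integral into a finite sum $\sum_{k=0}^{2n-1}Cx_L^\ast/(k+2)=Cx_L^\ast(H_{2n+1}-1)=O(\log n)$. The precise cancellation between the simple pole of $a$ at $x_L^\ast$ and the linear vanishing of the cutoff $[1-(s/x_L^\ast)^{2n}]$ at $s=x_L^\ast$ is the essential mechanism. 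Substituting back gives $2n\le C\log n$, which fails for $n$ sufficiently large, contradicting the existence of the zero $x_0$ and thus yielding $f>0$ on $(0,x_L^\ast]$.
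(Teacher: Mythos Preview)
Your proof is correct and lands on the same core estimate as the paper. Dividing the paper's variation-of-constants formula
\[
f(x)=Ax^n+\frac{1}{2n}\int_0^x \frac{\Psi_\e(z)}{z}f(z)\Bigl[(x/z)^n-(z/x)^n\Bigr]dz
\]
by $x^n$ yields exactly your identity $\phi(x)=c-\frac{1}{2n}\int_0^x \frac{a(s)\phi(s)}{s}[1-(s/x)^{2n}]\,ds$, and both arguments finish with the same harmonic-number bound $O(\log n)$ against the factor $2n$.

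The routes to the key inequality differ slightly. The paper iterates the integral equation once and uses the sign of the quadratic term to justify replacing $f(z)$ by $Az^n$ inside the integral. You instead derive the monotonicity $\phi'<0$ directly from the first-order reduction $\psi=xf'-nf$, which gives $\phi\le c$ and hence the same replacement. Your route is a bit more transparent about why the bootstrap closes (no circularity in ``as long as $f>0$''), and your choice to push the upper limit to $x_L^\ast$ and let the cutoff $[1-(s/x_L^\ast)^{2n}]$ cancel the pole of $a$ is marginally cleaner than the paper's intermediate step $(\bar z-z_L^\ast)^{-1}\le(\bar z-\bar x)^{-1}$. Both approaches require (and you correctly use) that the residue constant in $a(s)\sim A/(x_L^\ast-s)$ is uniformly bounded for $\e\in[0,\e_0]$, which follows from the uniform lower bound on $\partial_x c_L$ near $x_L^\ast$.
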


\begin{proof}
In order to proceed, we start writing the above in a more convenient way as follows
\begin{equation*}
f''(x)+\frac{1}{x}f'(x)-\frac{n^2}{x^2}f(x)=\frac{\Psi_\e(x)}{x^2}f(x),
\end{equation*}
where (we write $\lambda \equiv \lambda(\e)$ to make the dependence on $\e$ explicit)
\[
\Psi_\e(x):=-\frac{x(1-\e/2)\varpi_0'(x(1-\e/2))}{\lambda(\e)+c(x(1-\e/2))}.
\]

We find that the general solution is given by
\begin{equation*}
f(x)=A x^n +Bx^{-n}  - \frac{1}{x^n}\frac{1}{2n}\int_0^x \frac{\Psi_\e(z)}{z^2} f(z) z^{1+n}dz + x^n \frac{1}{2n}\int_0^x \frac{\Psi_\e(z)}{z^2} f(z) z^{1-n}dz.
\end{equation*}
Because the boundary condition at zero, and since $\Psi_\e(z)/z^2=O(1)$, the first integral behaves like $x^{2n+2}$ and the second one like $x^2$. Therefore, $B$ has to be equal to $0$. We can then write
\begin{align}\label{formula}
f(x)=Ax^n+\frac{1}{2n}\int_0^x \frac{\Psi_\e(z)}{z}f(z)\left[\left(\frac{x}{z}\right)^n-\left(\frac{z}{x}\right)^n\right]dz,
\end{align}
where $A>0$ to get $\lim_{x\to 0^+} x^{-n}f(x)>0$. Let us take $A=1$ for simplicity. 

Iterating once this formula, we have  the following expression for \( f(x) \):
\begin{align*}
    f(x)=x^n &+\frac{1}{2n}\int_0^x \frac{\Psi_\e(z)}{z}z^n\left[\left(\frac{x}{z}\right)^n-\left(\frac{z}{x}\right)^n\right]dz \\
    &+ \frac{1}{2n}\int_0^x \frac{\Psi_\e(z)}{z}\left(\frac{1}{2n}\int_0^z \frac{\Psi_\e(y)}{y}f(y)\left[\left(\frac{z}{y}\right)^n-\left(\frac{y}{z}\right)^n\right]dy\right)\left[\left(\frac{x}{z}\right)^n-\left(\frac{z}{x}\right)^n\right]dz. 
\end{align*}
Since $\Psi_\e(x)<0$ for all $x\in(0,x_L^\ast]$, and
$$\left(\frac{x}{z}\right)^n-\left(\frac{z}{x}\right)^n\geq 0, \qquad 0<z\leq x,$$
we observe that the last integral term on the right-hand side is non-negative whenever \( f(x) > 0 \).

We know by hypotheis that \( f \) is positive near the origin. Thus, as long as \( f(x) > 0 \), which is guaranteed near the origin and for \( x \in (0, x_L^\ast] \), we can neglect the last (complicated) term in the expression for \( f \) to obtain a simpler lower bound:
\begin{align}\label{unait}
f(x) > x^n + \frac{1}{2n} \int_0^x \frac{\Psi_\varepsilon(z)}{z} z^n \left[ \left( \frac{x}{z} \right)^n - \left( \frac{z}{x} \right)^n \right] \, dz.
\end{align}

This reduced inequality no longer depends on \( f \), and  its right-hand side is an explicit expression that is considerably simpler to analyze. Therefore, the proof reduces to verifying that this lower bound remains strictly positive for \( x \in (0, x_L^\ast] \).

Recalling the definition of $\Psi_\e$, we have to check that
\[
x^n -\frac{1}{2n}\int_0^x \frac{z(1-\e/2)\varpi_0'(z(1-\e/2))}{\l(\e)+c(z(1-\e/2)}z^{n-1}\left[\left(\frac{x}{z}\right)^n-\left(\frac{z}{x}\right)^n\right]dz>0, \qquad \text{for } x\in(0,x_L^\ast].
\]
Firstly, we consider the change of variable $z(1-\e/2)=\bar{z}$ and introduce the auxiliary notation $x(1-\e/2)=\bar{x}$. Thus, we can just write (after multiplying all the expression by $(1-\e/2)^n$)
\[
\bar{x}^n -\frac{1}{2n}\frac{1}{\bar{x}^n}\int_0^{\bar{x}}  \frac{\varpi_0'(\bar{z})}{\l(\e)+c(\bar{z})}\left(\bar{x}^{2n}-\bar{z}^{2n}\right)d\bar{z}>0, \qquad \text{for } \bar{x}\in(0,(1-\e/2)x_L^\ast].
\]

By definition, \( x_L^\ast \) is a simple zero of \( \l(\e) + c(\cdot(1 - \varepsilon/2)) \). Thus, introducing \( z_L^\ast := (1 - \varepsilon/2)x_L^\ast \) gives us 
\[
\frac{\bar{z}-z_L^\ast}{\l(\e)+c(\bar{z})}\leq C_\e,
\]
with $C_\e=O(1)$. This implies (using also that $(\bar{z}-z_L^\ast)^{-1}\geq (\bar{z}-\bar{x})^{-1}$ as $\bar{z}\in(0,\bar{x})$)
\begin{equation*}
0\leq \int_0^{\bar{x}}  \frac{\varpi_0'(\bar{z})}{\l(\e)+c(\bar{z})}\left(\bar{x}^{2n}-\bar{z}^{2n}\right)d\bar{z}
 \leq C_\e\int_0^{\bar{x}}  \frac{\varpi_0'(\bar{z})}{\bar{z}-\bar{x}}(\bar{x}^{2n}-\bar{z}^{2n})d\bar{z}. 
\end{equation*}
Moreover, using the rough but sufficient estimate \(0 \geq  \varpi_0'(\bar{z}) \geq -2/3 \) and the change of variable \( \bar{z} = \xi \bar{x} \), we obtain
\begin{equation*}
\int_0^{\bar{x}}  \frac{\varpi_0'(\bar{z})}{\bar{z}-\bar{x}}(\bar{x}^{2n}-\bar{z}^{2n})\,d\bar{z}     
\leq \frac{2}{3} \bar{x}^{2n} \int_0^1 \frac{1 - \xi^{2n}}{1 - \xi} \,d\xi 
\leq \frac{2}{3} \bar{x}^{2n} \left( \log(2n) + \gamma + O(n^{-1}) \right),
\end{equation*}
where in the last step we have used an upper bound for the \(2n\)-th harmonic number, and where \(\gamma\) denotes the Euler–Mascheroni constant.

All the above, allows us to say that there exist $n^\ast\gg 1 $ such that for all $n>n^\ast$ we have
\begin{multline*}
\bar{x}^n -\frac{1}{2n}\frac{1}{\bar{x}^n}\int_0^{\bar{x}}  \frac{\varpi_0'(\bar{z})}{\l(\e)+c(\bar{z})}\left(\bar{x}^{2n}-\bar{z}^{2n}\right)d\bar{z}\\
\geq \bar{x}^n\left[1-\frac{C_\e}{n}\left( \log(2n) + \gamma + O\left(\frac{1}{n}\right) \right)\right] >0, \qquad \text{for } \bar{x}\in(0,(1-\e/2)x_L^\ast].
\end{multline*}
\end{proof}

\begin{remark}\label{rem:ngrande}

Numerical inspection shows that \eqref{unait} is enough to prove Lemma \ref{l:generalLeft} for $n\geq 4$. In addition, iterating formula \eqref{formula} two more times, one could get a sharper bound for $f$ which would be enough to show Lemma \ref{l:generalLeft} for $n\geq 2$, after some long and tedious computations (we have also checked this fact numerically). To keep the argument accessible, we restricted our analysis to the case of sufficiently large \( n \), where a direct proof can be established. This is the only point in our analysis in which we need to make this assumption.
\end{remark}

\begin{lemma}\label{l:generalRIGHT}
Let $0\leq \e\leq \e_0$  and let \( f \) be a solution of 
\[
f''(z) + \frac{f'(z)}{z}- \frac{1}{z^2}\left(n^2-\frac{z^{-1}(1+\e/2)\varpi_0'(z^{-1}(1+\e/2))}{\l(\e)+c(z^{-1}(1+\e/2))}\right)f(z)=0, \qquad \text{for } 0 < z <(x_R^\ast)^{-1}
\]
that satisfies 
\[
\lim_{z\to 0^+} z^{1-n}f(z) = 0, \qquad \text{and} \qquad \lim_{z\to 0^+}z^{-n}f(z)>0.
\]
Then, it satisfies $f(z)>0$ for all $z\in\left(0,(x_R^\ast)^{-1}\right].$    
\end{lemma}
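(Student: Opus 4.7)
The plan is to mirror the strategy of Lemma \ref{l:generalLeft}, taking advantage of the fact that in this case the sign of the effective potential works in our favor. First I would rewrite the equation as
\begin{equation*}
f''(z)+\frac{f'(z)}{z}-\frac{n^2}{z^2}f(z)=\frac{\Psi^R_\e(z)}{z^2}f(z), \qquad \Psi^R_\e(z):=-\frac{(1+\e/2)z^{-1}\varpi_0'((1+\e/2)z^{-1})}{\lambda(\e)+c((1+\e/2)z^{-1})},
\end{equation*}
and verify that $\Psi^R_\e(z)>0$ for every $z\in(0,(x_R^\ast)^{-1})$. Indeed $\varpi_0'(r)=-2r/(1+r^2)^{2}<0$, so the numerator in the ratio is negative; and for $z\in(0,(x_R^\ast)^{-1})$ the argument $r=(1+\e/2)/z$ lies in $((1+\e/2)x_R^\ast,+\infty)$, on which $\lambda+c(r)>0$ by parts \eqref{bd2}, \eqref{bd3} and \eqref{bd4} of Lemma \ref{bounddenominator}. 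The outer minus sign therefore makes $\Psi^R_\e$ strictly positive, in contrast to the situation of Lemma \ref{l:generalLeft}.

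Next I would apply variation of parameters to the fundamental pair $\{z^n,z^{-n}\}$ of the homogeneous problem $y''+y'/z-n^2 y/z^2=0$ (Wronskian $-2n/z$). Using $\varpi_0'(r)\sim -2/r^3$ at infinity, one checks that $\Psi^R_\e(z)/z^2=O(1)$ as $z\to 0^+$, so all integrals arising in the variation-of-parameters formula are absolutely convergent at the origin. The prescribed boundary behavior $\lim_{z\to 0^+}z^{1-n}f(z)=0$ and $\lim_{z\to 0^+}z^{-n}f(z)>0$ then force the coefficient of $z^{-n}$ to vanish and the coefficient of $z^n$ to be strictly positive; normalizing the latter to one, one arrives at
\begin{equation*}
f(z)=z^n+\frac{1}{2n}\int_0^z\frac{\Psi^R_\e(y)}{y}f(y)\left[\left(\frac{z}{y}\right)^n-\left(\frac{y}{z}\right)^n\right]dy,
\end{equation*}
which is the exact analogue on the right of \eqref{formula}.

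With this representation in hand the positivity on the open interval follows by a direct bootstrap, \emph{without} any iteration or largeness constraint on $n$: since $f(z)\sim z^n$ near zero, $f>0$ on a maximal interval $(0,z_\dagger)$; if $z_\dagger<(x_R^\ast)^{-1}$, then on $(0,z_\dagger)$ each factor $\Psi^R_\e(y)>0$, $f(y)>0$ and $(z_\dagger/y)^n-(y/z_\dagger)^n\geq 0$ has the right sign, so the integral term is nonnegative and the identity yields $f(z_\dagger)\geq z_\dagger^n>0$, contradicting $f(z_\dagger)=0$. To reach the closed endpoint $z^\ast:=(x_R^\ast)^{-1}$, observe that $\Psi^R_\e$ has a simple pole there coming from the simple zero of $\lambda+c_R$, so $z^\ast$ is a regular singular point of the ODE with indicial exponents $0$ and $1$; both local solutions are therefore bounded at $z^\ast$ (the second one having at worst a $(z-z^\ast)\log|z-z^\ast|$ correction), and $f$ extends continuously. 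Passing to the limit in $f(z)\geq z^n$ gives $f(z^\ast)\geq(z^\ast)^n>0$. The only mildly delicate point of the whole argument is this endpoint continuity, but it is entirely handled by standard Frobenius theory and is not a real obstacle; the favorable sign of $\Psi^R_\e$ is what makes this case strictly simpler than Lemma \ref{l:generalLeft}.
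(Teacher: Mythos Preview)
Your proof is correct and follows essentially the same approach as the paper: rewrite the equation so that the right-hand side is $\Phi_\e(z)z^{-2}f(z)$ with $\Phi_\e>0$, derive the integral representation $f(z)=z^n+\tfrac{1}{2n}\int_0^z\frac{\Phi_\e(s)}{s}f(s)\bigl[(z/s)^n-(s/z)^n\bigr]\,ds$ via variation of parameters, and then bootstrap positivity directly from the fact that every factor in the integrand is nonnegative. Your treatment is in fact slightly more detailed than the paper's, which does not spell out the verification that $\Phi_\e>0$ nor the Frobenius argument for continuity at the endpoint $(x_R^\ast)^{-1}$.
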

\begin{proof}
Following the same approach as before, we start by rewriting the expression as
\begin{equation*}
f''(z)+\frac{1}{z}f'(z)-\frac{n^2}{z^2}f(z)=\frac{\Phi_\e(z)}{z^2}f(z),
\end{equation*}
where
\[
\Phi_\e(z):=-\frac{z^{-1}(1+\e/2)\varpi_0'(z^{-1}(1+\e/2))}{\l(\e)+c(z^{-1}(1+\e/2))}.
\]

We find that the general solution is given by
\begin{equation*}
f(z)=A z^n +B z^{-n}  - \frac{1}{z^n}\frac{1}{2n}\int_0^z \frac{\Phi_\e(s)}{s^2} f(s) s^{1+n}ds + z^n \frac{1}{2n}\int_0^z \frac{\Phi_\e(s)}{s^2} f(s) s^{1-n}ds.
\end{equation*}
Because of the boundary condition at zero, and since $\Phi_\e(z)/z^2=O(1)$, the first integral behaves like $z^{2n+2}$ and the second one like $z^2$. Therefore, $B$ has to be equal to $0$. We can then write
\[
f(z)=Az^n+\frac{1}{2n}\int_0^z \frac{\Phi_\e(s)}{s}f(z)\left[\left(\frac{z}{s}\right)^n-\left(\frac{s}{z}\right)^n\right]ds,
\]
where $A>0$ to get $\lim_{z\to 0^+} z^{-n}f(z)>0$. Let us take $A=1$ for simplicity.

Since \(\Phi_\e(s) > 0\) for all \(s \in (0, (x_R^\ast)^{-1}]\), and
\[
\left(\frac{z}{s}\right)^n - \left(\frac{s}{z}\right)^n \geq 0, \qquad \text{for } 0 < s \leq z,
\]
we can apply the fact that, by hypothesis, \(f(z) > 0\) near \(z = 0^+\), and bootstrap the positivity of the solution to conclude that \(f\) remains positive throughout the entire interval $z\in\left(0,(x_R^\ast)^{-1}\right].$
\end{proof}

\section{Estimates of the Difference}

\subsection{The difference in the right}\label{Linftd}
In this section, we want to make a detailed study of the difference between the solutions of \eqref{eq:oderight}-\eqref{eq:bcoderight} ($\e=0$) and the solution of \eqref{eqfrob:rightgeneral} with $\e>0.$

We will take,
\[
d(x):=h_R(x)-\mathring{h}_R(x), \qquad \text{for } 1<x<+\infty.
\]
To facilitate and alleviate the notation, and since we are interested in 
tracking the dependence on the $\e$ parameter, we introduce and will work  with the following notation:
\[
h_\e(x):=h_R(x), \qquad h_0(x):=\mathring{h}_R(x), \qquad d(x):=h_\e(x)-h_0(x).
\]
In addition, if the domain of a norm is not explicitly specified, we shall understand it as $[1,+\infty).$

We  find that $d$ satisfies
\begin{align*}
d''(x) + \frac{1}{x} d'(x)-\frac{1}{x^2}\left(n^2-\frac{x\varpi_0'(x)}{\lambda_0+c_R(x,0)}\right)d(x)&=\frac{1}{x}E(x)h_\e(x), \qquad x\in(1,+\infty),\\
\lim_{x\to 1^+}d(x)&=0,\\
\lim_{x\to +\infty}x^{n-1}d(x)&=0,
\end{align*}
where
\begin{equation}\label{def:E_R}
E(x):=-\left(\frac{\varpi_0'(x(1+\e/2))(1+\e/2)}{\l+c_R(x,\e)}-\frac{\varpi_0'(x)}{\lambda_0+c_R(x,0)}\right).
\end{equation}

We note that, if $f^{\sharp}$ solves the homogeneous system given by
\begin{align*}
(f^{\sharp})''(x) + \frac{1}{x} (f^\sharp)'(x)-\frac{1}{x^2}\left(n^2-\frac{x\varpi_0'(x)}{\lambda_0+c_R(x,0)}\right)f^\sharp(x)&=0, \qquad x\in(1,+\infty),\\
\lim_{x\to 1^+}f^\sharp(x)&=0,\\
\lim_{x\to +\infty}x^{n-1}f^\sharp(x)&=0,
\end{align*}
then $f^\sharp$ is the trivial solution, i.e., $f^\sharp(x) = 0$ in all $x\in(1,+\infty)$. This is a consequence of applying Lemma \ref{l:positivity_right} after making the change $z=1/x$. Thus, the method of the variation of constants yields 

\begin{equation*}    d(x)=\tilde{g}_1(x)\int_{x}^{+\infty}\frac{h_0(z)}{W[\tilde{g}_1,h_0](z)} \frac{1}{z}E(z)h_\e(z)dz + h_0(x)\int_1^x \frac{\tilde{g}_1(z)}{W[\tilde{g}_1,h_0](z)}\frac{1}{z}E(z)h_\e(z)dz,
\end{equation*}
with $\tilde{g}_1(x)=q_1(1/x)$ as in Section \ref{s:FrobRight}. Recall (see \eqref{Wrosk_cte_right}) that $W[\tilde{g}_1,h_0](z) = 1/z$ for all $z \in (1,+\infty).$

Then, the above expression reduces to 
\begin{equation}\label{dright}
d(x)=\tilde{g}_1(x)\int_{x}^{+\infty}h_0(z) E(z)h_\e(z)dz + h_0(x)\int_1^x \tilde{g}_1(z)  E(z)h_\e(z)dz:=d_1(x)+d_2(x).
\end{equation}
Notice that due to the boundary conditions for $h_\e(z)$ and the definition of $\varpi_0(z)$, the function $E(z)$ has suitable properties in order to yield $\lim_{x\to+\infty}x^{n-1}d(x)=0$.\\

To derive bounds on the difference $d$, it is necessary to establish suitable estimates for $E$. We rely on the following result.

\begin{lemma}\label{estimatesE} 
The following estimates hold for the function $E$:
\begin{enumerate}
    \item\label{eE1} For $1 < x \leq 2$,
    \[
        \int_x^2 |E(z)|\,dz \;\leq\; C\e \,\|\varpi_0'\|_{W^{1,\infty}} \,|x-1|^{-1},
    \]

    \item\label{eE2} 
    \[
        \int_2^{+\infty}|E(z)|\,dz \;\leq\; C\e \left(\|\varpi_0'\|_{L^1} +\|x\varpi_0''\|_{L^1} \right),
    \]
    
    \item\label{eE3} 
    \[
        \int_1^2 |z-1|\,|E(z)|\,dz \;\leq\; C\e \log(\e^{-1}) \,\|\varpi_0'\|_{W^{1,\infty}} ,
    \]
\end{enumerate}
where $C>0$ denotes a constant independent of $\e$, whose value may vary from line to line.
\end{lemma}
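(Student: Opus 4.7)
The plan is to decompose $E$ into a piece capturing the perturbation of the denominator and a piece capturing the perturbation of the numerator, and to estimate each using the asymptotic expansions of Section~\ref{ss:analysisC}. Writing
\begin{equation*}
E(z) \;=\; \varpi_0'(z)\,\frac{(\l-\lambda_0)+(c_R(z,\e)-c_R(z,0))}{(\lambda_0+c_R(z,0))(\l+c_R(z,\e))} \;+\; \frac{\varpi_0'(z)-(1+\e/2)\varpi_0'(z(1+\e/2))}{\l+c_R(z,\e)} \;=:\; E_1(z)+E_2(z),
\end{equation*}
the ansatz \eqref{def:ansatzlambda} gives $\l-\lambda_0=O(\e)$, and the expansion of $c_R$ together with Remark \ref{r:pa2cR} gives $c_R(z,\e)-c_R(z,0)=O(\e)$ uniformly on $[1,2]$ and on $[2,\infty)$. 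For $E_2$, I would write $\varpi_0'(z)-\varpi_0'(z(1+\e/2)) = -\int_z^{z(1+\e/2)}\varpi_0''(s)\,ds$ and treat the leftover $(\e/2)\varpi_0'(z(1+\e/2))$ separately; both contributions are $O(\e\|\varpi_0'\|_{W^{1,\infty}})$ pointwise on $[1,2]$.

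For the denominators I invoke Lemma~\ref{bounddenominator}: on $(1,2]$ I combine $|\lambda_0+c_R(z,0)|^{-1}\lesssim (z-1)^{-1}$ from \eqref{bd1} with the sharper lower bound $\l+c_R(z,\e)\geq c(z-1)+c_1\e$ from \eqref{bd3}, yielding
\begin{equation*}
|E_1(z)| \;\lesssim\; \frac{\e\,\|\varpi_0'\|_\infty}{(z-1)(c(z-1)+c_1\e)},\qquad |E_2(z)| \;\lesssim\; \frac{\e\,\|\varpi_0'\|_{W^{1,\infty}}}{c(z-1)+c_1\e}.
\end{equation*}
On $[2,\infty)$ item \eqref{bd2} bounds both denominators below by a positive constant, so $|E(z)|\lesssim \e|\varpi_0'(z)|+|\varpi_0'(z)-(1+\e/2)\varpi_0'(z(1+\e/2))|$.

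For \eqref{eE1} I substitute $u=z-1$ and split the interval $[x-1,1]$ at $u=\e$. On $\{u\geq\e\}$ one has $cu+c_1\e\geq cu$, so the $E_1$-contribution is at most $\int_{\max(x-1,\e)}^1 \e/u^2\,du\leq C\e/(x-1)$. On $\{u<\e\}$ (which is empty unless $x-1<\e$) one uses $cu+c_1\e\geq c_1\e$ to get $\int_{x-1}^\e \e/(\e u)\,du = c_1^{-1}\log(\e/(x-1))\leq C\e/(x-1)$, since $\log t\leq t$ for $t\geq 1$ and here $t=\e/(x-1)\geq 1$. The $E_2$-contribution is absorbed similarly with room to spare. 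For \eqref{eE2}, the $E_1$-part is immediately $\lesssim \e\|\varpi_0'\|_{L^1}$, while for $E_2$ I apply Fubini to
\begin{equation*}
\int_2^{+\infty}\!\!\int_z^{z(1+\e/2)}|\varpi_0''(s)|\,ds\,dz \;=\; \int |\varpi_0''(s)|\,\bigl|\{z:z\le s\le z(1+\e/2)\}\bigr|\,ds \;\lesssim\; \e\,\|s\varpi_0''(s)\|_{L^1},
\end{equation*}
and the remaining $(\e/2)\varpi_0'(z(1+\e/2))$ term contributes $\lesssim \e\|\varpi_0'\|_{L^1}$. Finally, for \eqref{eE3} the extra factor $|z-1|$ cancels the singular $(z-1)^{-1}$ in $E_1$ and leaves $\int_1^2 \e/(c(z-1)+c_1\e)\,dz = c^{-1}\e\log\frac{c+c_1\e}{c_1\e}\lesssim \e\log(\e^{-1})$; $E_2$ gives the same integral without any cancellation needed and the same bound.

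The main obstacle is the bookkeeping of the two competing small scales $(z-1)$ and $\e$ in the denominator estimates; once the split at $u=\e$ (equivalently a partial-fraction decomposition of $(u(cu+c_1\e))^{-1}$) is introduced, each of the three claims reduces to an elementary one-variable integral. The estimate on $\partial_\e c_R$ uniformity provided by Remark~\ref{r:pa2cR} and the sharp bound \eqref{bd3} are exactly what prevents the constants from blowing up as $\e\to 0^+$.
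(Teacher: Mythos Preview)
Your proof is correct and follows essentially the same route as the paper: the same splitting $E=E_1+E_2$, the same invocation of Lemma~\ref{bounddenominator} and Remark~\ref{r:pa2cR}, and the same Fubini argument for the $\|x\varpi_0''\|_{L^1}$ term in \eqref{eE2}. The only cosmetic difference is in \eqref{eE1}: the paper uses \eqref{bd1} for \emph{both} denominators, obtaining $|E_1(z)|\lesssim \e(z-1)^{-2}$ directly and integrating to $C\e/(x-1)$ in one line, whereas you use the finer bound \eqref{bd3} for $\lambda+c_R(z,\e)$ and then split the resulting integral at $u=\e$. Your argument works, but note that since $c(z-1)+c_1\e\geq c(z-1)$ you could have recovered the paper's simpler $(z-1)^{-2}$ bound immediately and avoided the split.
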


To prove Lemma~\ref{estimatesE}, we first need some bounds for \( \lambda_0 + c_R(z,0) \) and \( \lambda + c_R(z,\varepsilon) \), which appear in the denominators of~\eqref{def:E_R}. These bounds follow from Lemma~\ref{bounddenominator}. With this result at hand, we are now ready to prove it.

\begin{proof}[Proof of Lemma \ref{estimatesE}]
We split $E=E_1+E_2$ with
\begin{align}
E_1(z)&=\varpi_0'(z)\frac{\l-\lambda_0+ c_R(z,\e)-c_R(z,0)}{(\lambda_0+c_R(z,0))(\l+c_R(z,\e))} , \label{E1:RIGHT}\\
E_2(z)&=- \frac{\varpi_0'(z(1+\e/2))(1+\e/2)-\varpi_0'(z)}{\l+c_R(z,\e)}. \label{E2_RIGHT} 
\end{align}

Here, we recall that 
\begin{align*}\l-\lambda_0&=\e\lambda_1+\e^2\log^2(\e)\l_2(\e),\\
c_R(x,\e)-c_R(x,0)&=\e \left(-\dfrac{1}{4x^2} + \dfrac{1 }{2x^2 \left(1 + x^2\right)}+\dfrac{\log(1+x^2)}{2x^2}\right)+ \e^2 \pa^2_\e c_R(x,\e^\ast), \quad \text{for some }\e^\ast\in(0,\e),
\end{align*} and consequently (see \emph{Remark \ref{r:pa2cR}})
\begin{equation}\label{aux:comE1right}
|\l-\lambda_0|\leq C\e, \qquad \text{and} \qquad  |c_R(z,\e)-c_R(z,0)| \leq C\e, \qquad \text{for all } z\in(1,+\infty).
\end{equation}

\eqref{eE1}-\ref{estimatesE}. For $1<x\leq 2$, we can bound
\begin{align*}
    \int_{x}^2|E_1(z)| dz \leq C\e \|\varpi'_0\|_{L^\infty} \int_x^2 \frac{dz}{(z-1)^2}\leq C\e \|\varpi'_0\|_{L^\infty}|x-1|^{-1}, 
\end{align*}
using \eqref{aux:comE1right} and \eqref{bd1}-Lemma \ref{bounddenominator}. In addition,
\begin{align*}
\int_{x}^2 |E_2(z)|dz \leq C\e \left(\|\varpi''_0\|_{L^\infty}+\|\varpi'_0\|_{L^\infty}\right)\int_{x}^2 \frac{dz}{|z-1|}\leq  C\e \|\varpi_0'\|_{W^{1,\infty}}\left(1+|\log(|x-1|)|\right),
\end{align*}
where we have used again  \eqref{bd1}-Lemma \ref{bounddenominator} and 
\begin{align}\label{invarpi}
    \left|\varpi'_0\left(z\left(1+\frac{\e}{2}\right)\right)-\varpi_0'(z)\right|\leq \frac{|z|\e }{2}\int_0^1\left|\varpi_0''\left(z\left(1+\frac{\e s}{2}\right)\right)\right|ds\leq C\e \|\varpi_0''\|_{L^\infty},
\end{align}
for  $1<z\leq 2$.

\eqref{eE2}-\ref{estimatesE}. We have that 
\begin{align*}
    \int_2^{+\infty} |E_1(z)|dz \leq C\e \int_{2}^\infty \frac{|\varpi_0'(z)|dz}{|\lambda+c_R(z,\e)||\lambda_0+c_R(z,0)|},
\end{align*}
by using \eqref{aux:comE1right}. In addition, using \eqref{bd2}-Lemma \ref{bounddenominator} we find that
\begin{align*}
    \int_2^{+\infty} |E_1(z)|dz\leq C \e \|\varpi_0'\|_{L^1}.
\end{align*}

For $E_2$, using \eqref{bd2}-Lemma \ref{bounddenominator}, we have that
\begin{align*}
\int_{2}^\infty |E_2(z)|& \leq C\int_2^{+\infty} \left|\left(1+\frac{\e}{2}\right)\varpi'_0\left(z\left(1+\frac{\e}{2}\right)\right)-\varpi_0(z)\right|dz\\
&\leq C\e \int_2^{+\infty} \left|\varpi'_0\left(z\left(1+\frac{\e}{2}\right)\right)\right|dz+C\int_2^{+\infty} \left|\varpi'_0\left(z\left(1+\frac{\e}{2}\right)\right)-\varpi'_0(z)\right|dz\\
&\leq C\e \|\varpi_0'\|_{L^1}+C\e \|x\varpi''_0\|_{L^1}. 
\end{align*}
In the last inequality we have used that
\begin{align*}
\int_2^{+\infty} \left|\varpi'_0\left(z\left(1+\frac{\e}{2}\right)\right)-\varpi'_0(z)\right|dz\leq \int_2^{+\infty}\frac{|z|\e}{2}\int_0^1 \left|\varpi_0''\left(z\left(1+\frac{\e s}{2}\right)\right)\right|ds\leq C\e \|x\omega''_0\|_{L^1} 
\end{align*}

\eqref{eE3}-\ref{estimatesE}. For $E_1$ we find that
\begin{align*}
    \int_{1}^2|z-1||E_1(z)|\leq C\e \|\varpi_0'\|_{L^\infty}\int_{1}^2\frac{|z-1|}{|\lambda_0+c_R(z,0)|}\frac{1}{|\lambda+c_R(z,\e)|}dz.
\end{align*}
By \eqref{bd1} and \eqref{bd3} in Lemma \ref{bounddenominator}, we learn that
\begin{align*}
\int_{1}^2\frac{|z-1|}{|\lambda_0+c_R(z,0)|}\frac{1}{|\lambda+c_R(z,\e)|}dz\leq C \int_{1}^{2}\frac{1}{(z-1)+c\e}dz\leq C\log(\e^{-1}).
\end{align*}
In addition
\begin{align*}
\int_{1}^2|z-1||E_2(z)|dz \leq C\e (\|\varpi_0'\|_{L^\infty}+\|\varpi_0''\|_{L^\infty}),
\end{align*}
by using \eqref{bd1}-Lemma \ref{bounddenominator} and \eqref{invarpi}.
\end{proof}

Having established Lemma \ref{estimatesE}, we now turn to the analysis of the difference between the two functions. 

\begin{lemma}\label{l:dRbound}
Let $\l$ be as in \eqref{def:ansatzlambda} with $\l_1$ satisfying \eqref{bound:l1}, $\l_2(\e)=O(1)$ in terms of the parameter $\e$ and $0<\e<\e_0$.
Then, we have 
\begin{equation*}
    \|x^nd\|_{L^\infty}\leq  C\e \log(\e^{-1})\|x^n h_\e\|_{L^\infty},
\end{equation*}
where the constant \( C \) depends on the parameters \( \varepsilon_0 \)
and on the norms  \( \|x^n h_0\|_{L^\infty} \), \( \|\varpi_0'\|_{W^{1,\infty}} \), \( \|\varpi_0'\|_{L^1} \), \( \|x\varpi_0''\|_{L^1} \), \( \|\tilde{g}_1\|_{W^{1,\infty}([1,2])} \) and \( \|x^{-n}\tilde{g}_1\|_{L^\infty([2,+\infty))}\).
\end{lemma}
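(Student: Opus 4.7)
The plan is to bound $\|x^n d\|_{L^\infty}$ using the variation of constants representation
\[
d(x)=\tilde g_1(x)\int_x^{+\infty}h_0(z)E(z)h_\e(z)\,dz+h_0(x)\int_1^x \tilde g_1(z)E(z)h_\e(z)\,dz =: d_1(x)+d_2(x),
\]
together with the three estimates for $E$ from Lemma~\ref{estimatesE}, the decay bounds $\|x^nh_0\|_{L^\infty}<\infty$, $\|x^{-n}\tilde g_1\|_{L^\infty([2,\infty))}<\infty$, and the fact that $\tilde g_1(x)=q_1(1/x)$ vanishes linearly at $x=1$ because $q_1^{[0]}=0$ and $q_1^{[1]}=1$, i.e.\ $|\tilde g_1(x)|\le C|x-1|$ on $[1,2]$. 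Throughout I will write $M:=\|z^n h_\e\|_{L^\infty}$ and split the analysis into the two regimes $x\in[1,2]$ and $x\in[2,+\infty)$.

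For the term $d_2$, I would first treat $x\in[1,2]$: since $|\tilde g_1(z)|\le C|z-1|$ on this interval and $h_0,h_\e$ are uniformly bounded there, estimate~\eqref{eE3}-\ref{estimatesE} directly gives
\[
x^n|d_2(x)|\le C\|z^nh_0\|_\infty M\int_1^2|z-1||E(z)|\,dz\le CM\e\log(\e^{-1}).
\]
For $x\ge 2$ I split the integral at $z=2$: on $[1,2]$ the same bound applies, while on $[2,x]$ I use $|\tilde g_1(z)h_\e(z)|\le CM$ together with estimate~\eqref{eE2}-\ref{estimatesE}, and then I absorb the prefactor $x^n|h_0(x)|\le C$. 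This yields $\|x^nd_2\|_{L^\infty}\le CM\e\log(\e^{-1})$.

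For the term $d_1$ the point is to use the vanishing of $\tilde g_1$ at $x=1$ to absorb the singular bound $\int_x^2|E|\,dz\le C\e|x-1|^{-1}$ from~\eqref{eE1}-\ref{estimatesE}. For $x\in[1,2]$ I bound
\[
x^n|d_1(x)|\le C|x-1|\Big(\|h_0\|_{L^\infty([1,2])}M\!\int_x^2|E|\,dz+\|z^nh_0\|_\infty M\!\int_2^{+\infty}|E|\,dz\Big)\le CM\e,
\]
using~\eqref{eE1}-\ref{estimatesE} for the first piece and~\eqref{eE2}-\ref{estimatesE} for the second. For $x\ge 2$ I use $|\tilde g_1(x)|\le Cx^n$ and $|h_0(z)h_\e(z)|\le \|z^nh_0\|_\infty M z^{-2n}$, and then observe the key cancellation
\[
x^{2n}\int_x^{+\infty}z^{-2n}|E(z)|\,dz=\int_x^{+\infty}(x/z)^{2n}|E(z)|\,dz\le\int_2^{+\infty}|E(z)|\,dz\le C\e,
\]
so that $x^n|d_1(x)|\le CM\e$. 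Adding the contributions of $d_1$ and $d_2$ yields the stated bound $\|x^nd\|_{L^\infty}\le C\e\log(\e^{-1})M$.

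The only genuine obstacle is making sure the two sources of potential blow-up cancel in a quantitative way: the factor $|x-1|^{-1}$ in~\eqref{eE1}-\ref{estimatesE} against the vanishing of $\tilde g_1(x)$ as $x\to 1^+$, and the polynomial growth $|\tilde g_1(x)|\sim x^n$ at infinity against the $x^{-n}$ decay of $h_0,h_\e$. Both cancellations are linear/algebraic and explicit, so once I keep track of them via the two separate regimes the $\log(\e^{-1})$ factor arises exclusively from estimate~\eqref{eE3}-\ref{estimatesE} applied to $d_2$ near $x=1$. Everything else is bounded by $CM\e$.
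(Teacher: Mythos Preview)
Your proof is correct and follows essentially the same approach as the paper: the same split $d=d_1+d_2$, the same two regimes $x\in[1,2]$ and $x\in[2,\infty)$, the same use of $|\tilde g_1(x)|\le C|x-1|$ near $x=1$ to absorb the singularity from \eqref{eE1}-\ref{estimatesE}, and the same weighted cancellation $|x^{-n}\tilde g_1|\cdot|z^n h_0|\cdot|z^n h_\e|$ at infinity. The only cosmetic difference is that the paper handles the tail of $d_1$ for $x\ge 2$ via the change of variables $z\mapsto xz$, whereas you use the equivalent pointwise bound $(x/z)^{2n}\le 1$.
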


\begin{remark}
  By the definition of $\varpi_0$, Lemma \ref{l:prophr} and \ref{l:decg1} the norms, \( \|\varpi_0'\|_{W^{1,\infty}} \), \( \|\varpi_0'\|_{L^1} \), \( \|x\varpi_0''\|_{L^1} \), \( \|x^n h_0\|_{L^\infty} \),  \( \|\tilde{g}_1\|_{W^{1,\infty}([1,2])} \) and \( \|x^{-n}\tilde{g}_1\|_{L^\infty([2,+\infty))} \) are bounded by a universal constant. 
\end{remark}

\begin{remark}\label{continuidadh}
We emphasize that, with minor modifications, the proof of Lemma~\ref{l:dRbound} also yields the continuity of \( h_R \) with respect to \( \lambda_2 \), assumed bounded, \( |\lambda_2| \le M \), for sufficiently small \( \varepsilon \).
\end{remark}

\begin{proof}
We bound $d_1$ and $d_2$ in \eqref{dright} separately. 
We begin by studying $d_1$.

In order to estimate the \( L^\infty \) norm over the full interval \(  (1, +\infty) \), we proceed by decomposing the domain into two parts:
\begin{itemize}
    \item [a)]the bounded interval \( (1, 2] \),
    \item [b)]the unbounded interval \( (2, +\infty) \). 

\end{itemize}
This decomposition allows us to treat separately the behavior of the function near \( z = 1^+ \) in the bounded region, and near infinity in the unbounded one.\\

a) $x\in(1,2]$. Since $\|x^nd\|_{L^\infty([1,2])}\sim \|d\|_{L^\infty([1,2])}$ we just bound the second norm. We split it as follows:
\begin{equation*}
d_{1}(x)=\underbrace{\tilde{g}_1(x)\int_{x}^{2}h_0(z)h_\e(z)E(z)dz}_{d_{11}(x)}+\underbrace{\tilde{g}_1(x)\int_{2}^{+\infty}h_0(z)h_\e(z)E(z)dz}_{d_{12}(x)}.
\end{equation*}

For $d_{11}$ we have that
\begin{align*}
|d_{11}(x)|\leq \|h_0\|_{L^\infty}\|\tilde{g}'_1\|_{L^\infty([1,2])}|x-1| \|h_\e\|_{L^\infty}\int_{x}^2 |E(z)|dz.
\end{align*}
Applying \eqref{eE1}-Lemma \ref{estimatesE} we find that
\begin{align*}
    \|d_{11}\|_{L^\infty([1,2])}\leq C\e  \|h_0\|_{L^\infty}\|\tilde{g}'_1\|_{L^\infty([1,2])}\|h_\e\|_{L^\infty}\|\varpi_0'\|_{W^{1,\infty}}.
\end{align*}

For $d_{12}$ we have that
\begin{align*}
    \|d_{12}\|_{L^\infty([1,2])}\leq & \|\tilde{g}_1\|_{L^\infty([1,2])}\int_{2}^{+\infty} |h_0(z)| |h_\e(z)||E(z)|dz\\
    \leq & \|\tilde{g}_1\|_{L^\infty([1,2])}\| h_0\|_{L^\infty}\|h_\e\|_{L^\infty}\|E(z)\|_{L^1([2,+\infty))}\\
    \leq &C\e\left(\|\varpi_0'\|_{L^1}+\|x\varpi_0''\|_{L^1}\right) \|\tilde{g}_1\|_{L^\infty([1,2])}\| h_0\|_{L^\infty}\|h_\e\|_{L^\infty},
\end{align*}
because of \eqref{eE2}-Lemma \ref{estimatesE}.

We have then shown that $
\|d_1\|_{L^\infty([1,2])}\leq C\e \|h_\e\|_{L^\infty}.
$

For $d_2$ we have that 
\begin{align}\label{ddos}
|d_2(x)|\leq &|h_0(x)|\int_1^x |\tilde{g}_1(z)||h_\e(z)|E(z)|dz\leq\|h_0\|_{L^\infty}\|h_\e\|_{L^\infty}\int_1^2 |\tilde{g}_1(z)||E(z)|dz\\
\leq &\|h_0\|_{L^\infty}\|h_\e\|_{L^\infty}\|\tilde{g}'_1\|_{L^\infty([1,2])}\int_1^2 |z-1||E(z)|dz\nonumber\\
\leq &C\e\log(\e^{-1}) \|h_0\|_{L^\infty}\|h_\e\|_{L^\infty}\|\tilde{g}'_1\|_{L^\infty([1,2])}\|\varpi'_0\|_{W^{1,\infty}}\nonumber,
\end{align}
where we have used \eqref{eE3}-Lemma \ref{estimatesE}. This yields $\|d_2\|_{L^\infty([1,2])}\leq C \e\log(\e^{-1})\|h_\e\|_{L^\infty},$ and then we can conclude $
\|d\|_{L^\infty([1,2])}\leq C \e\log(\e^{-1})\|h_\e\|_{L^\infty}.
$

b) $x\in (2,+\infty)$. We first consider again $d_1$. We have that
\begin{align*}
|d_1(x)|\leq |\tilde{g}_1(x) x^{-n}|x^n\int_{x}^\infty|z^n h_0(z)||z^n h_\e(z)|z^{-2n}|E(z)|dz.
\end{align*}
Let us introduce $H_0(x):=x^nh_0(x)$ and $H_\e(x):=x^nh_\e(x)$. Rescaling the integral yields
\begin{align*}
    \int_{x}^\infty|H_0(z)|| H_\e(z)|z^{-2n}|E(z)|dz&=x^{-2n+1}\int_{1}^\infty |H_0(xz)||H_\e(xz)| z^{-2n}E(xz) dz\\&\leq x^{-2n+1} \|H_0\|_{L^\infty([2,+\infty))}\|H_\e\|_{L^\infty([2,+\infty))}
    \int_{1}^\infty |E(xz)|dz\\
    &\leq x^{-2n}\|H_0\|_{L^\infty([2,+\infty))}\|H_\e\|_{L^\infty([2,+\infty))}\|E\|_{L^1([2,+\infty))}.
\end{align*}
Therefore, by \eqref{eE2}-Lemma \ref{estimatesE} we have that
\begin{align*}
    \|x^nd_1\|_{L^\infty([2,+\infty))}\leq C\e (\|\varpi'_0\|_{L^1}+\|x\varpi''_0\|_{L^1})\|x^nh_0\|_{L^\infty([2,+\infty))}\|x^{-n}\tilde{g}_1\|_{L^\infty([2,+\infty))}\|x^n h_\e\|_{L^\infty([2,+\infty))}.
\end{align*}
To bound $x^nd_2(x)$ we proceed as follows
\begin{align*}
    |x^nd_2(x)|&\leq |x^n h_0(x)|\int_{1}^x |\tilde{g}_1(z)||h_\e(z)||E(z)|dz\\
    &\leq |x^n h_0(x)|\int_{1}^2 |\tilde{g}_1(z)||h_\e(z)||E(z)|dz+ |x^nh_0(x)|\int_{2}^x|\tilde{g}_1(z)||h_\e(z)||E(z)|dz.
\end{align*}
The integral $\int_{1}^2 |\tilde{g}_1(z)||h_\e(z)||E(z)|dz$ has already been bounded in \eqref{ddos}. The integral 
\begin{align*}
\int_{2}^x|\tilde{g}_1(z)||h_\e(z)||E(z)|dz
\end{align*}
can be estimated by
\begin{align*}
    \leq \|x^{-n}\tilde{g}_1\|_{L^\infty}\|x^n h_\e\|_{L^\infty}\int_2^{+\infty}|E(z)|dz.
\end{align*}
Thus, by by \eqref{eE2}-Lemma \ref{estimatesE} we have $\|x^nd_2\|_{L^\infty([2,+\infty))}\leq C\e\log(\e^{-1})\|x^nh_\e\|_{L^\infty([2,+\infty))}$ and then $$\|x^nd\|_{L^\infty([2,+\infty))}\leq C\e\log(\e^{-1})\|x^nh_\e\|_{L^\infty([2,+\infty)}.$$ 

Putting together the cases a) and b) we have proved that
\begin{align*}
\|x^n d\|_{L^\infty}\leq C\e\log(\e^{-1})\|x^nh_\e\|_{L^\infty}.
\end{align*}
\end{proof}

\begin{corollary}\label{c:rightd}
Let $\l$ be as in \eqref{def:ansatzlambda} with $\l_1$ satisfying \eqref{bound:l1}, $\l_2(\e)=O(1)$ in terms of the parameter $\e$ and $0<\e<\e_0$.
Then, we have 
$$\|x^n d \|_{L^\infty}\leq C\e\log(\e^{-1}),$$
where the constant \( C \) depends on the parameters \( \varepsilon_0 \)
 and on the norms \( \|h_0\|_{L^\infty} \), \( \|x^n h_0\|_{L^\infty} \), \( \|\varpi_0'\|_{W^{1,\infty}} \), \( \|\varpi_0'\|_{L^1} \), \( \|x\varpi_0''\|_{L^1} \), \( \|\tilde{g}_1\|_{W^{1,\infty}([1,2])} \) and \( \|x^{-n}\tilde{g}_1\|_{L^\infty([2,+\infty))} \).
\end{corollary}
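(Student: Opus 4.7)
The plan is to leverage Lemma~\ref{l:dRbound} together with the triangle inequality $h_\e = h_0 + d$ in order to absorb the factor $\|x^n h_\e\|_{L^\infty}$ that appears on the right-hand side of the bound in Lemma~\ref{l:dRbound}. Since Lemma~\ref{l:prophr} (item 2) provides a finite, $\e$-independent bound for $\|x^n h_0\|_{L^\infty}$, this will yield an absolute estimate of size $C\e\log(\e^{-1})$ for the difference.

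First, I would apply Lemma~\ref{l:dRbound} directly to obtain
$$\|x^n d\|_{L^\infty} \leq C\e\log(\e^{-1}) \|x^n h_\e\|_{L^\infty},$$
with $C$ depending only on the universal quantities listed in the hypotheses. Then, using $h_\e = h_0 + d$ and the triangle inequality,
$$\|x^n h_\e\|_{L^\infty} \leq \|x^n h_0\|_{L^\infty} + \|x^n d\|_{L^\infty},$$
which combined with the previous display gives
$$\|x^n d\|_{L^\infty} \leq C\e\log(\e^{-1})\|x^n h_0\|_{L^\infty} + C\e\log(\e^{-1})\|x^n d\|_{L^\infty}.$$

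The final step is a standard absorption argument. Since $\e\log(\e^{-1}) \to 0$ as $\e \to 0^+$, I would shrink $\e_0$ (if necessary) so that $C\e\log(\e^{-1}) \leq \tfrac{1}{2}$ for all $0<\e<\e_0$. Moving the second term on the right to the left-hand side then yields
$$\|x^n d\|_{L^\infty} \leq 2C\e\log(\e^{-1})\|x^n h_0\|_{L^\infty},$$
and replacing $2C\|x^n h_0\|_{L^\infty}$ by a new constant (still depending only on the universal norms listed in the statement) produces exactly the claimed bound.

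There is essentially no obstacle here: the corollary is a short bootstrap consequence of Lemma~\ref{l:dRbound} and the a priori control of $h_0$ from Lemma~\ref{l:prophr}. The only mild care needed is to make sure that the threshold $\e_0$ used to close the absorption is consistent with the one already fixed in Lemma~\ref{l:dRbound} and in Lemmas~\ref{bounddenominator}--\ref{bounddenominatorLeft}; taking the minimum of all such thresholds causes no issue since each of them is independent of $\e$.
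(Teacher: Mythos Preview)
Your proof is correct and follows exactly the same approach as the paper: apply Lemma~\ref{l:dRbound}, use $h_\e = h_0 + d$ with the triangle inequality, and absorb for $\e$ small. The paper states this in a single line, but the underlying argument is identical to yours.
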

\begin{proof} Thanks to Lemma \ref{l:dRbound} and the fact that \( h_\varepsilon = h_0 + d \), we obtain the conclusion of the result for sufficiently small \( \varepsilon \).
\end{proof}

The next lemma exploits the bound on $\|x^nd\|_{L^\infty}$ derived above to estimate an integral term that plays a key role in the forthcoming analysis.

\begin{lemma}\label{l:Dright}
Let $\l$ be as in \eqref{def:ansatzlambda} with $\l_1$ satisfying \eqref{bound:l1}, $\l_2(\e)=O(1)$ in terms of the parameter $\e$ and $0<\e<\e_0$. Then, the integral 
\begin{equation*}
\int_1^{+\infty} \varpi_0'(x(1+\e/2)) x^{-n} \frac{d(x)}{\lambda+c_R(x,\e)}dx=M[h_0,\lambda_1] + R[h_0,\lambda_1; h_\e,\lambda_2(\e)],
\end{equation*}
where $M[h_0,\lambda_1], R[h_0,\lambda_1;\, h_\e,\lambda_2(\e)] \in \mathbb{R}$ and
\begin{align*}
    M[h_0,\lambda_1]&=O(\e\log^2(\e)),\\
    R[h_0,\lambda_1;\, h_\e,\lambda_2(\e)]&=o(\e\log^2(\e)).
\end{align*}
In addition, $R[h_0,\lambda_1;\, h_\e, \lambda_2(\e)]$ depends continuously on $h_\e$ and $\lambda_2(\e)$. Here, all the estimates may depend on the parameter $\e_0$ and on the norms \( \|h_0\|_{L^\infty} \), \( \|x^n h_0\|_{L^\infty} \), \( \|\varpi_0'\|_{W^{1,\infty}} \), \( \|\varpi_0'\|_{L^1} \), \( \|x\varpi_0''\|_{L^1} \), \( \|\tilde{g}_1\|_{W^{1,\infty}([1,2])} \) and \( \|x^{-n}\tilde{g}_1\|_{L^\infty([2,+\infty))} \), which have been shown to be bounded by a universal constant.
\end{lemma}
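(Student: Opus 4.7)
The plan is to substitute the variation-of-constants representation $d=d_1+d_2$ from \eqref{dright} into the integral, split $h_\e = h_0+d$, and then expand every $\e$-dependent quantity so that the ``main'' part depending only on $(h_0,\lambda_1)$ is isolated and everything else (carrying either an extra $d$-factor or a higher-order $\e$-contribution from $\lambda$ and $c_R$) is absorbed into a strictly smaller, continuous remainder.

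First I would split the integration domain into $[1,2]$ and $[2,+\infty)$. On the tail $[2,+\infty)$ Lemma \ref{bounddenominator}\eqref{bd2} gives $|\lambda+c_R(x,\e)|^{-1}\leq C$, while $x^{-n}\varpi_0'(x(1+\e/2))\in L^1$ and Corollary \ref{c:rightd} yields $|d(x)|\leq C\e\log(\e^{-1})\,x^{-n}$; the resulting contribution is $O(\e\log(\e^{-1}))=o(\e\log^2\e)$ and depends continuously on $(h_\e,\lambda_2(\e))$, so I would put it entirely into $R$. On $[1,2]$ I substitute $h_\e=h_0+d$ inside the formulas defining $d_1$ and $d_2$, obtaining four pieces. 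The two pieces that carry an explicit factor $d$ inside the inner integral pick up an additional $\|x^n d\|_{L^\infty}=O(\e\log(\e^{-1}))$ (Corollary \ref{c:rightd}); reusing the bookkeeping of Lemma \ref{l:dRbound} (namely Lemma \ref{estimatesE}\eqref{eE3} together with the linear vanishing of $\tilde g_1$ at $x=1$ for the $d_2$-type piece, and Lemma \ref{estimatesE}\eqref{eE1}--\eqref{eE2} for the $d_1$-type piece), combined with Lemma \ref{bounddenominator}\eqref{bd3} for the outer factor $1/(\lambda+c_R(x,\e))$, these terms are bounded by $C\e^{2}\log^{3}(\e^{-1})=o(\e\log^2\e)$ and depend continuously on $(h_\e,\lambda_2(\e))$; they go to $R$.

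It remains to analyze the two ``pure-$h_0$'' terms, in which $h_\e$ has been replaced by $h_0$ but the full $E$ still appears. Here I would expand
\[
E(z)=\e\,E^{(1)}(z;\lambda_1)+r_E(z;\lambda_2(\e),\e),
\]
where $E^{(1)}$ is obtained by linearizing \eqref{E1:RIGHT}--\eqref{E2_RIGHT} in $\e$ using $\lambda-\lambda_0=\e\lambda_1+\e^2\log^2\e\,\lambda_2(\e)$, the expansion $c_R(z,\e)=c_R(z,0)+\e\,(\partial_\e c_R)(z,0)+\tfrac{\e^2}{2}(\partial_\e^2 c_R)(z,\e^\ast)$ with the uniform bound of Remark \ref{r:pa2cR}, and the Taylor expansion of $\varpi_0'(z(1+\e/2))$; thus $E^{(1)}$ depends only on $(\varpi_0,\lambda_1)$, while $r_E$ collects the $\e^2\log^2\e\,\lambda_2(\e)$ and $O(\e^2)$ contributions and depends continuously on $\lambda_2(\e)$. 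Substituting $\e E^{(1)}$ (with $h_\e\leftarrow h_0$) into $d_1,d_2$ and then into the outer integral defines $M[h_0,\lambda_1]$; two logarithmic divergences stack (one from $d\sim \e\log(\e^{-1})$ near $x=1$, and one from $\int_1^{2}(\lambda+c_R(z,\e))^{-1}\,dz\sim\log(\e^{-1})$ via Lemma \ref{bounddenominator}\eqref{bd3}), yielding the sharp bound $M=O(\e\log^2\e)$. Substituting $r_E$ produces $o(\e\log^2\e)$ terms that are continuous in $\lambda_2(\e)$; they are added to $R$. Continuity of $R$ in $h_\e$ is immediate from the linear dependence of each leftover piece on $h_\e$, while continuity in $\lambda_2(\e)$ combines this with Remark \ref{continuidadh} and the continuous dependence of $E$ and of the denominators on $\lambda$.

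The main obstacle is bookkeeping: a naive count of logarithmic singularities in the ``mixed'' terms (those with both a $d$-factor inside and a $1/(\lambda+c_R)$ outside) gives exactly $\e\log^2(\e)$, which would not be enough for the $o$-bound required of $R$. The gain comes from the fact that $d$ itself already carries a factor $\e\log(\e^{-1})$ from Corollary \ref{c:rightd}, upgrading the bound to $\e^2\log^3(\e^{-1})=o(\e\log^2\e)$; this observation, applied uniformly across all four ``mixed'' pieces and to the $r_E$-contribution, is what makes the decomposition $M+R$ work.
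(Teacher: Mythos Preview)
Your overall architecture matches the paper's: substitute $h_\e=h_0+d$ inside the variation-of-constants formula for $d$, dispatch the ``mixed'' pieces (which carry an extra factor $\|x^n d\|_{L^\infty}=O(\e\log\e^{-1})$) to $R$ as $O(\e^2\log^3\e^{-1})$, and then peel the $\lambda_2$-dependence off the remaining ``pure-$h_0$'' term to define $M$. But your last step --- Taylor-linearizing $E$ in $\e$ --- does not work as written. The leading-order coefficient $E^{(1)}$ you obtain contains the term $\varpi_0'(z)\,(\lambda_1+(\partial_\e c_R)(z,0))\,(\lambda_0+c_R(z,0))^{-2}$, which behaves like $(z-1)^{-2}$ near $z=1$. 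Since $\tilde g_1(z)\sim (z-1)$, the inner integral defining $d_2^{(1)}(x)=h_0(x)\int_1^x \tilde g_1(z)h_0(z)\,\e E^{(1)}(z)\,dz$ diverges logarithmically at $z=1$, so your $M$ is not even well-defined. The point is that in the true $E$ the singularity at $z=1$ is regularized by the $\e$ in the denominator $\lambda+c_R(z,\e)\ge c(z-1)+c_1\e$; a naive Taylor expansion in $\e$ destroys precisely this regularization.

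There is a second, related oversight: the outer weight $1/(\lambda+c_R(x,\e))$ also depends on $\lambda_2(\e)$, so even with a correct inner splitting your $M$ as described would not be a function of $(h_0,\lambda_1)$ alone. The paper fixes both issues simultaneously by never expanding in $\e$ but only stripping out $\lambda_2$: it writes $1/(\lambda+c_R)=1/(\lambda_0+\e\lambda_1+c_R(\cdot,\e))+\mathcal R[\lambda_2]$ for the outer factor, and $E=E^0+E^{\lambda_2}$ with $E^0$ obtained from $E$ by the same substitution $\lambda\to\lambda_0+\e\lambda_1$ in the denominator. Because $\lambda_0+\e\lambda_1+c_R(x,\e)$ satisfies the same lower bound as $\lambda+c_R(x,\e)$ in Lemma~\ref{bounddenominator}, all the estimates of Lemmas~\ref{estimatesE} and~\ref{l:dRbound} transfer verbatim to $E^0$ and to the outer weight, giving $M=O(\e\log^2\e)$; the $\mathcal R$- and $E^{\lambda_2}$-pieces carry an extra $\e^2\log^2\e$ in the numerator and land in $R$. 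Replace your Taylor step by this $\lambda_2$-peeling and the rest of your outline goes through.
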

\begin{remark}
Notice that, in Lemma \ref{l:Dright}, the term $M$ does not depend on $\lambda_2(\e)$. All the dependence on $\lambda_2(\e)$ is contained in the term $R$. This dependence occurs in two ways: explicitly, and implicitly through $h_\e$.  
\end{remark}
\begin{proof} We will say that a term is an $M$-term if it depends only on $\lambda_1$, or $h_0$, that is, if it does not depend on either $h_\e$ or $\lambda_2(\e)$. We will say that a term is a $R$-term if it does depend on either $h_\e$ or $\lambda_2(\e)$ and the dependence is continuous. Here, if the domain of a norm is not explicitly specified, we shall understand it as $[1,+\infty).$

The continuity of $R$ with respect to $h_\e$ and $\lambda_2(\e)$ is easy to check since every function that arises in the analysis will be continuous with respect to these variables. We focus on proving the estimates $O(\e\log^2(\e))$ and $o(\e\log^2(\e)).$

Consider the factor 
\begin{align}\label{sR}
    \frac{1}{\lambda+c_R(x,\e)}&=\frac{1}{\lambda_0+\e \lambda_1 + c_R(x,\e)+\e^2\log^2(\e)\lambda_2(\e)}\nonumber\\
    &=\frac{1}{\lambda_0+\e \lambda_1 + c_R(x,\e)}+\cR[\lambda_2(\e)](\e,x),
\end{align}
with
\begin{align}\label{Rdefi}
\cR[\lambda_2(\e)](\e,x):=-\frac{\e^2\log^2(\e)\lambda_2(\e)}{(\lambda_0+\e \lambda_1 + c_R(x,\e))(\lambda+c_R(x,\e))}.
\end{align}
From Lemma \ref{bounddenominator} we have that
\begin{align}\label{cR}
    |\cR[\lambda_2(\e)](\e,x)|\leq C\e^2\log^2(\e)\frac{1}{|c(x-1)+c_1\e|^2},\quad \text{for $1<x\leq 2$}.
\end{align}
Notice that we have the same bound for $\lambda_0+\e\lambda_1+c_R(x,\e)$ than for $\lambda+c_R(x,\e)$.
Thus
\begin{align}
    \label{L1R}\|\cR[\lambda_2(\e)](\e,x)\|_{L^1([1,2])}&\leq  C\e\log^2(\e),
\end{align}
and 
\begin{align}
\label{R1}\int_{x}^2|\cR[\lambda_2](\e)(x,\e)|dx&\leq C\e^2\log^2(\e)|x-1|^{-1},\quad \text{for $1<x\leq 2$,}\\
\label{LinfR}|\cR[\lambda_2(\e)](\e,x)|&\leq C \e^2\log^2(\e),\quad \text{for $2<x<+\infty$,}\\
\label{R2}\|(x-1)\cR[\lambda_2(\e)](\e,x)\|_{L^1([1,2])}&\leq  C\e^2\log^3(\e^{-1}).
\end{align}

Using the relation \( h_\varepsilon = h_0 + d \), we can rewrite \( d \) in \eqref{dright} as follows:
\begin{align*}
    d(x)&=\underbrace{\tilde{g}_1(x)\int_x^\infty h_0(z)h_0(z)E(z)dz+h_0(x)\int_1^x \tilde{g}_1(z)h_0(z)E(z)dz}_{d_0^{\lambda_2}}\\
    &+\underbrace{\tilde{g}_1(x)\int_x^\infty h_0(z)d(z)E(z)dz + h_0(x)\int_{1}^x\tilde{g}_1(z)d(z)E(z)dz}_{d^{\lambda_2}}\\
    &=:d_0^{\lambda_2}(x)+d^{\lambda_2}(x),
\end{align*}
and we find that
\begin{align*}
&\int_1^{+\infty} \varpi_0'(x(1+\e/2)) x^{-n} \frac{d(x)}{\lambda+c_R(x,\e)}dx\\
&=\int_1^{+\infty} \varpi_0'(x(1+\e/2)) x^{-n}\frac{d_0^{\lambda_2}(x)}{\lambda_0+\e\lambda_1+c_R(x,\e)}dx\\
&+\int_1^{+\infty} \varpi_0'(x(1+\e/2)) x^{-n}\frac{d^{\lambda_2}(x)}{\lambda_0+\e\lambda_1+c_R(x,\e)}dx\\
&+\int_1^{+\infty} \varpi_0'(x(1+\e/2)) x^{-n}\cR[\lambda_2(\e)](\e,x)d(x)dx.
\end{align*}

Notice that $d^{\lambda_2}$ differs from $d$ (see \eqref{dright}) in that $d$ arises in $d^{\lambda_2}$ instead of $h_\e$. Then we apply Lemma~\ref{l:dRbound} to bound \( d^{\lambda_2} \) (by replacing \( h_\varepsilon \) with \( d \) in that lemma), and we obtain that
\begin{align*}
    &\left|\int_1^{+\infty} \varpi_0'(x(1+\e/2)) x^{-n}\frac{d^{\lambda_2}(x)}{\lambda_0+\e\lambda_1+c_R(x,\e)}dx\right|\\\leq& C   \|d^{\lambda_2}\|_{L^\infty}\left(\|\varpi_0'\|_{L^\infty}\int_1^2 \frac{dx}{(x-1)+c\e}+C\int_2^{+\infty}|\varpi_0'(x(1+\e/2))x^{-n}|dx\right)\\
    \leq& C\e\log(\e^{-1})\|d\|_{L^\infty}\log(\e^{-1}),
\end{align*}
where the constant $C$ depends only on the controlled quantities. Here we have applied Lemma \ref{bounddenominator}. Using  Corollary \ref{c:rightd} yields
\begin{align*}
    &\left|\int_1^{+\infty} \varpi_0'(x(1+\e/2)) x^{-n}\frac{d^{\lambda_2}(x)}{\lambda_0+\e\lambda_1+c_R(x,\e)}dx\right|\leq C\e^2\log^3(\e^{-1})=o\left(\e\log^2(\e)\right).
\end{align*}
    Therefore
\begin{align*}
    \int_1^{+\infty} \varpi_0'(x(1+\e/2)) x^{-n}\frac{d^{\lambda_2}(x)}{\lambda_0+\e\lambda_1+c_R(x,\e)}dx,
\end{align*}
is a $R$-term. In addition,
\begin{align*}
&\int_1^{+\infty} \varpi_0'(x(1+\e/2)) x^{-n}\cR[\lambda_2(\e)](\e,x)d(x)dx\\
&\leq C\|d\|_{L^\infty}\left(\|\varpi'_0x^{-n}\|_{L^\infty}\|\cR\|_{L^1([1,2])}+C\|\cR\|_{L^\infty([2,+\infty))}\|\varpi_0'x^{-n}\|_{L^1}\right)\\
&\leq C\e^2\log^3(\e^{-1})=o(\e\log^2(\e)),
\end{align*}
where we have used Corollary \ref{c:rightd}, \eqref{L1R} and \eqref{LinfR}. Therefore, we have obtained that
\begin{align*}
&\int_1^{+\infty} \varpi_0'(x(1+\e/2)) x^{-n} \frac{d(x)}{\lambda+c_R(x,\e)}dx\\
=&\int_1^{+\infty} \varpi_0'(x(1+\e/2)) x^{-n}\frac{d_0^{\lambda_2}(x)}{\lambda_0+\e\lambda_1+c_R(x,\e)}dx\quad+\quad R\text{-terms}.
\end{align*}
With the remaining term
\begin{align*}
    \int_1^{+\infty} \varpi_0'(x(1+\e/2)) x^{-n}\frac{d_0^{\lambda_2}(x)}{\lambda_0+\e\lambda_1+c_R(x,\e)}dx,
\end{align*}
we still have to do some work since $d_0^{\lambda_2}$ still depends on $\lambda_2(\e)$ through $E$. Indeed,
\begin{align*}
E(x)&=-\frac{\varpi'_0(x(1+\e/2))(1+\e/2)}{\lambda+c_R(x,\e)}+\frac{\varpi'_0(x)}{\lambda_0+c_R(x,0)}\\
&= -\frac{\varpi'_0(x(1+\e/2))(1+\e/2)}{\lambda_0+\lambda_1\e+c_R(x,\e)}+\frac{\varpi'_0(x)}{\lambda_0+c_R(x,0)}\\
&\quad -  \varpi'_0(x(1+\e/2))(1+\e/2)\mathcal{R}[\lambda_2(\e)](x,\e)\\
&=: E^0(x)+E^{\lambda_2}(x).
\end{align*}
Here we have used again the splitting \eqref{sR}-\eqref{Rdefi}. We emphasize that $E^0$ does not depend on $\lambda_2(\e)$ anymore. Thus,
it remains to prove that, on the one hand
\begin{align}\label{Mterm}
    \int_1^{+\infty} \varpi_0'(x(1+\e/2)) x^{-n} \frac{d^0(x)}{\lambda_0+\lambda_1\e+c_R(x,\e)}dx
\end{align}
with
\begin{align*}
   d^0(x):=\tilde{g}_1(x)\int_x^{+\infty} h_0(z)h_0(z)E^0(z)dz+h_0(x)\int_1^x \tilde{g}_1(z)h_0(z)E^0(z)dz,
\end{align*}
is a $M$-term. On the other hand, that the term 
\begin{align}\label{Rterm}
    \int_1^{+\infty} \varpi_0'(x(1+\e/2)) x^{-n} \frac{d^\cR(x)}{\lambda_0+\lambda_1\e+c_R(x,\e)}dx,
\end{align}
with
\begin{align*}
   d^\cR(x):=\tilde{g}_1(x)\int_x^{+\infty} h_0(z)h_0(z)E^{\lambda_2}(z)dz+h_0(x)\int_1^x \tilde{g}_1(z)h_0(z)E^{\lambda_2}(z)dz, 
\end{align*}
is a $R$-term.

The term in \eqref{Mterm} is easy to deal with. We first notice that we have for $\lambda_0+\lambda_1\e+c_R(x,\e)$ an analogous result that Lemma \ref{bounddenominator} for $\lambda+c_R(x,\e)$. Therefore, we have an analogous result that Lemma \ref{estimatesE} for $E^0$ that for $E$. This yields, from Lemma \ref{l:dRbound}, the estimates
\begin{align*}
    \left|\int_1^{+\infty} \varpi_0'(x(1+\e/2)) x^{-n} \frac{d^0(x)}{\lambda_0+\lambda_1\e+c_R(x,\e)}dx\right|\leq C\e\log^2(\e),
\end{align*}
which proves that \eqref{Mterm} is a $M$-term. 

Finally, to verify that \eqref{Rterm} is indeed an $R$-term, we combine \eqref{R1}, \eqref{LinfR}, and \eqref{R2} to obtain the following result for $E^{\lambda_2}$. This result is analogous to Lemma~\ref{estimatesE}, but features a stronger decay in $\varepsilon$; in fact, we gain a factor of $\varepsilon \log^2(\varepsilon)$.
\begin{lemma}$E^{\lambda_2}$ satisfies:
\begin{enumerate}
       \item\label{EL1} For $1 < x \leq 2$,
    \[
        \int_x^2 |E^{\lambda_2}(z)|\,dz \;\leq\; C\e^2\log^2(\e)  \,|x-1|^{-1},
    \]

    \item\label{EL2} 
    \[
        \int_2^{+\infty}|E^{\lambda_2}(z)|\,dz \;\leq\; C\e^2 \log^2(\e),
    \]
    
    \item\label{EL3} 
    \[
        \int_1^2 |z-1|\,|E^{\lambda_2}(z)|\,dz \;\leq\; C\e^2 \log^3(\e^{-1}).
    \]
\end{enumerate}
\end{lemma}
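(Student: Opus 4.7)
The plan is to leverage the factorization
\[
E^{\lambda_2}(x)=-\varpi_0'(x(1+\e/2))(1+\e/2)\,\mathcal{R}[\lambda_2(\e)](x,\e),
\]
which isolates the $\lambda_2$-dependence inside $\mathcal{R}$. Since $\varpi_0'$ is globally bounded and $(1+\e/2)\le 2$, each of the three estimates reduces to an already-established bound on $\mathcal{R}$, combined with mild control on $\varpi_0'$.

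For \eqref{EL1}, on the interval $z\in(1,2]$ I would pull out the sup norm of $\varpi_0'$:
\[
\int_x^2|E^{\lambda_2}(z)|\,dz\;\le\;C\|\varpi_0'\|_{L^\infty}\int_x^2|\mathcal{R}[\lambda_2](\e,z)|\,dz,
\]
and then apply \eqref{R1} directly, yielding the bound $C\e^2\log^2(\e)|x-1|^{-1}$. For \eqref{EL2}, the factor $\mathcal{R}$ is uniformly bounded on $[2,+\infty)$ by \eqref{LinfR}, so
\[
\int_2^{+\infty}|E^{\lambda_2}(z)|\,dz\;\le\;C\|\mathcal{R}[\lambda_2](\e,\cdot)\|_{L^\infty([2,+\infty))}\int_2^{+\infty}|\varpi_0'(z(1+\e/2))|\,dz\;\le\;C\e^2\log^2(\e)\|\varpi_0'\|_{L^1},
\]
using that a change of variables absorbs the $(1+\e/2)$ into the $L^1$ norm of $\varpi_0'$. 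For \eqref{EL3}, again pulling $\|\varpi_0'\|_{L^\infty}$ out of the integral and invoking \eqref{R2} gives
\[
\int_1^2|z-1||E^{\lambda_2}(z)|\,dz\;\le\;C\|\varpi_0'\|_{L^\infty}\int_1^2(z-1)|\mathcal{R}[\lambda_2](\e,z)|\,dz\;\le\;C\e^2\log^3(\e^{-1}).
\]

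No new technical obstacle arises here; the lemma is essentially a bookkeeping consequence of the $\mathcal{R}$-estimates \eqref{R1}, \eqref{LinfR}, \eqref{R2} derived from Lemma \ref{bounddenominator} together with the boundedness and integrability of $\varpi_0'$. The only mild point is to verify that the constants $C$ do not depend on $\lambda_2(\e)$ (they depend only on the uniform bound $M$ in \eqref{bound:l2} and on $\e_0$, $\lambda_1+\tfrac{\log 2}{2}$ via Lemma \ref{bounddenominator}), so that the estimates are indeed uniform in the free parameter $\lambda_2(\e)$ to be fixed later by a fixed point argument.

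With these bounds in hand, the rest of the argument that \eqref{Rterm} is an $R$-term proceeds exactly as in the bound for $d^{\lambda_2}$ in Lemma \ref{l:dRbound}: one applies the $L^\infty$ version of Lemma \ref{l:dRbound} to $d^{\mathcal{R}}$ with $E$ replaced by $E^{\lambda_2}$, and the extra factor $\e\log^2(\e)$ in all three estimates above propagates through to yield $o(\e\log^2(\e))$ in the contribution to \eqref{Rterm}, matching what is required in Lemma \ref{l:Dright}.
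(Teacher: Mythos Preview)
Your proof is correct and matches the paper's approach exactly: the paper states that the lemma follows by combining the $\mathcal{R}$-estimates \eqref{R1}, \eqref{LinfR}, and \eqref{R2} with the boundedness and integrability of $\varpi_0'$, which is precisely what you do. The additional remarks you make about uniformity of the constants in $\lambda_2(\e)$ and the subsequent use of the lemma to show \eqref{Rterm} is an $R$-term are also in line with how the paper proceeds immediately after stating the lemma.
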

Therefore we can obtain for $d^{\cR}$, the estimate
\begin{align*}
\|d^{\cR}\|_{L^\infty}\leq C\e^2\log^3(\e^{-1}),
\end{align*}
proceeding as in Lemma \ref{l:dRbound}. This finally yields the bound
\begin{align}\label{intRterm}
  \left|  \int_1^{+\infty} \varpi_0'(x(1+\e/2)) x^{-n} \frac{d^\cR(x)}{\lambda_0+\lambda_1\e+c_R(x,\e)}dx\right|\leq C\e^2\log^4(\e),
\end{align}
which shows that \eqref{Rterm} is a $R$-term. 
\end{proof}

\subsection{The difference in the left}

In this section we perform the analogous study to the one carried out in the domain $(1,+\infty)$, but now restricted to the interval $(0,1)$. The problem is essentially the same, and therefore we will only state the main equations and results without entering into detailed proofs.  

We consider the difference between the solution of \eqref{eq:odeleft}--\eqref{eq:bcodeleft} with $\e=0$ and the solution of \eqref{eqfrob:leftgeneral} with $\e>0$, namely
\[
d(x):=h_L(x)-\mathring{h}_L(x), \qquad 0<x<1.
\]
As before, we introduce the simplified notation
\[
h_\e(x):=h_L(x), \qquad h_0(x):=\mathring{h}_L(x), \qquad d(x):=h_\e(x)-h_0(x).
\]

Recalling the expansion (see Section \ref{ss:analysisC})
\[
\l+c_L(x,\e)=\frac{\log(2)}{2}-\frac{\log(1+x^2)}{2x^2}+\e\left(\l_1+\frac{1}{4}+\frac{1}{2(1+x^2)}-\frac{\log(1+x^2)}{2x^2}\right)+O(\e^2),
\]
we find that $d$ satisfies
\begin{align*}
d''(x) + \frac{1}{x} d'(x)-\frac{1}{x^2}\left(n^2-\frac{x\varpi_0'(x)}{\lambda_0+c_L(x,0)}\right)d(x)&=\frac{1}{x}E(x)h_\e(x), \qquad 0 < x< 1,\\
\lim_{x\to 0^+}x^{1-n}d(x)&=0,\\
\lim_{x\to 1^-}d(x)&=0,
\end{align*}
with
\[
E(x):=-\left(\frac{\varpi_0'(x(1-\e/2))(1-\e/2)}{\l+c_L(x,\e)}-\frac{\varpi_0'(x)}{\lambda_0+c_L(x,0)}\right).
\]

As in the case of the interval $(1,+\infty)$, the homogeneous problem admits only the trivial solution. Therefore, by variation of constants we obtain the representation
\[
d(x)=g_1(x)\int_{0}^{x} h_0(z)  E(z)h_\e(z)\,dz + h_0(x)\int_x^1  g_1(z) E(z)h_\e(z)\,dz,
\]
where $g_1$ is the solution introduced in Section \ref{s:FrobLeft}.\\

In what follows, we shall only collect the results that we need, without providing proofs. Here, if the domain of a norm is not explicitly specified, we shall understand it as $[0,1].$

\begin{lemma}
Let $\l$ be as in \eqref{def:ansatzlambda} with $\l_1$ satisfying \eqref{bound:l1}, $\l_2(\e)=O(1)$ in terms of the parameter $\e$ and $0<\e<\e_0$.
Then, we have 
\begin{equation*}
    \|x^{-n}d\|_{L^\infty}\leq  C\e \log(\e^{-1})\|x^{-n} h_\e\|_{L^\infty},
\end{equation*}
where the constant \( C \) depends on the parameters \( \varepsilon_0 \)
and on the norms 
 \( \|x^{-n} h_0\|_{L^\infty} \), \( \|\varpi_0'\|_{W^{1,\infty}} \), \( \|g_1\|_{W^{1,\infty}([1/2,1])} \) and \( \|x^{n}g_1\|_{L^\infty}\) (All these norms are bounded by a universal constant).
\end{lemma}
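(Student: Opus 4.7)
The plan is to mirror the proof of Lemma \ref{l:dRbound}, exchanging the roles of the origin and infinity and replacing the right-hand norms with their left-hand counterparts. Now the weighted norm $\|x^{-n}\cdot\|_{L^\infty}$ controls the behaviour at $x=0^+$, while the singular point $x=1$ plays the role that $x=1^+$ played on the right. The representation $d=d_1+d_2$ with
\[
d_1(x)=g_1(x)\int_0^{x}h_0(z)E(z)h_\e(z)\,dz, \qquad d_2(x)=h_0(x)\int_x^{1}g_1(z)E(z)h_\e(z)\,dz,
\]
is already available, so the work is entirely in estimating these integrals.

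First, I would establish a left-hand analogue of Lemma \ref{estimatesE}. Splitting $E=E_1+E_2$ exactly as in \eqref{E1:RIGHT}--\eqref{E2_RIGHT}, the numerators are controlled by $\|\varpi_0'\|_{W^{1,\infty}}$ (the mean value theorem absorbs the $1-\e/2$ dilation), while the denominators are bounded below by Lemma \ref{bounddenominatorLeft}. This should yield $\int_x^{1}|E(z)|\,dz\leq C\e\|\varpi_0'\|_{W^{1,\infty}}|1-x|^{-1}$ for $1/2\leq x<1$, $\int_0^{1/2}|E(z)|\,dz\leq C\e$, and the ``singular'' estimate $\int_{1/2}^{1}(1-z)|E(z)|\,dz\leq C\e\log(\e^{-1})$.

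With these in hand, I would split the analysis into $[0,1/2]$ and $[1/2,1)$. On $[1/2,1)$ the weight $x^{-n}$ is comparable to a constant, and the argument proceeds exactly as in Lemma \ref{l:dRbound}: one uses that $h_0$ is bounded, that $g_1(1)=0$ with $|g_1(z)|\leq C|1-z|$ (which follows from the Wronskian normalization $W[\mathring{h}_L,g_1]=1/x$ and the Frobenius analysis at $x=1$, arguing as in Section \ref{s:FrobRight}), and the singular $L^{1}$-bounds on $E$ just established absorb the $|z-1|^{-1}$ factor produced by $1/(\lambda+c_L)$.

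On $[0,1/2]$ the analysis is genuinely different: one must exploit the vanishing $|h_0(z)|,\,|h_\e(z)|\lesssim z^{n}$ together with the blow-up $|g_1(z)|\lesssim z^{-n}$ (from $\|x^{n}g_1\|_{L^\infty}\leq C$) to gain a factor $z^{n}$ in the integrand which precisely cancels the weight. For $d_1$ this is transparent: $x^{-n}|g_1(x)|\leq C\,x^{-2n}$, while $|h_0(z)h_\e(z)|\leq C\,z^{2n}\|x^{-n}h_0\|_{L^\infty}\|x^{-n}h_\e\|_{L^\infty}$ and $|E(z)|\leq C\e$ on $[0,1/2]$, so integration in $z$ produces a $z^{2n+1}$ which more than dominates the $x^{-2n}$ prefactor. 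For $d_2$ one splits the remaining integral into $[x,1/2]$ (using the same cancellation, together with boundedness of $h_0/x^n$) and $[1/2,1]$ (using the singular $L^{1}$-estimate for $E$ combined with $|g_1(z)|\leq C$ on $[1/2,1]$). The main obstacle is ensuring that this cancellation is uniform in $x\in[0,1/2]$; this is precisely where the bound $\|x^{n}g_1\|_{L^\infty}\leq C$ from Lemma \ref{g1} is indispensable, since without it the $x^{-n}$ weight could not be absorbed.
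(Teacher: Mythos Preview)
Your approach is correct and is precisely the one the paper intends: it omits the proof of this lemma, stating only that ``the problem is essentially the same'' as Lemma \ref{l:dRbound} and that the results are collected ``without providing proofs.'' Your mirroring of the right-hand argument---splitting into $[0,1/2]$ and $[1/2,1)$, using $|g_1(z)|\leq C(1-z)$ near $z=1$ together with the singular $L^1$-estimate on $(1-z)|E(z)|$, and on $[0,1/2]$ exploiting the $z^{n}$ cancellation between $h_0,h_\e$ and the $z^{-n}$ blow-up of $g_1$---is exactly what is needed.

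One small slip: in your left-hand analogue of Lemma \ref{estimatesE}\,(\ref{eE1}) you write $\int_x^{1}|E(z)|\,dz\leq C\e|1-x|^{-1}$, but the integral you actually need for $d_1$ on $[1/2,1)$ is $\int_{1/2}^{x}|E(z)|\,dz\leq C\e|1-x|^{-1}$ (integration up to $x$, not up to the singular point $1$). The estimate $\int_x^1|E|$ would include the full singularity at $z=1$ and is not finite in general. This is evidently a typo, since your subsequent use of the bound---pairing it with the prefactor $|g_1(x)|\leq C(1-x)$ to cancel the $|1-x|^{-1}$---is consistent with the correct direction.
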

Since $h_\e = h_0 + d$, for $0<\e<\e_0$ small enough, we obtain the following result.
\begin{corollary}
Let $\l$ be as in \eqref{def:ansatzlambda} with $\l_1$ satisfying \eqref{bound:l1}, $\l_2(\e)=O(1)$ in terms of the parameter $\e$ and $0<\e<\e_0$.
Then, we have 
$$\|x^{-n} d \|_{L^\infty}\leq C\e\log(\e^{-1}),$$
where the constant \( C \) depends on the parameters \( \varepsilon_0 \)
 and on the norms  \( \|x^{-n} h_0\|_{L^\infty} \), \( \|\varpi_0'\|_{W^{1,\infty}} \), \( \|g_1\|_{W^{1,\infty}([1/2,1)} \) and \( \|x^{n}g_1\|_{L^\infty}\) .
\end{corollary}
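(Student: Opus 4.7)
The plan is to mirror the argument used for \cor{c:rightd}, since the corollary is precisely the left-hand analogue. The previous lemma has already established the bound
\[
\|x^{-n}d\|_{L^\infty} \leq C\e\log(\e^{-1})\,\|x^{-n}h_\e\|_{L^\infty},
\]
so the only remaining task is to convert the right-hand side from a bound involving $h_\e$ (the unknown) into one involving only $h_0$ (which has been shown to satisfy uniform estimates).

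The key step is to use the decomposition $h_\e = h_0 + d$ together with the triangle inequality in the weighted norm, giving
\[
\|x^{-n}h_\e\|_{L^\infty} \leq \|x^{-n}h_0\|_{L^\infty} + \|x^{-n}d\|_{L^\infty}.
\]
Substituting this into the lemma yields
\[
\|x^{-n}d\|_{L^\infty} \leq C\e\log(\e^{-1})\,\|x^{-n}h_0\|_{L^\infty} + C\e\log(\e^{-1})\,\|x^{-n}d\|_{L^\infty}.
\]
For $0<\e<\e_0$ with $\e_0$ sufficiently small (depending only on the universal constant $C$), the factor $C\e\log(\e^{-1})$ is strictly less than $1/2$, so the second term can be absorbed into the left-hand side. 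This produces
\[
\|x^{-n}d\|_{L^\infty} \leq 2C\e\log(\e^{-1})\,\|x^{-n}h_0\|_{L^\infty},
\]
which, after renaming the constant, is exactly the desired estimate.

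There is no real obstacle here: the entire difficulty has already been discharged in the preceding lemma, whose proof (via the variation-of-constants representation, the Wronskian identity $W[\mathring{h}_L,g_1](x)=1/x$ from \lem{g1}, and the estimates on $E$ analogous to \lem{estimatesE}) does the heavy lifting. The corollary itself is just an absorption argument valid for $\e$ sufficiently small, with the smallness threshold $\e_0$ depending on the same universal quantities already listed in the lemma.
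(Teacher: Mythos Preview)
Your proof is correct and follows exactly the same approach as the paper: the absorption argument using $h_\e = h_0 + d$, the triangle inequality in the weighted norm, and the smallness of $C\e\log(\e^{-1})$ for $\e$ sufficiently small. The paper's own proof is even more terse, simply stating that the result follows from the preceding lemma and the relation $h_\e = h_0 + d$ for $\e$ small enough.
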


Moreover, the bound on $\|x^{-n}d\|_{L^\infty}$ yields an estimate for a key integral term needed later.
Since the argument follows similar ideas, we state the result without proof.

\begin{lemma}\label{l:Dleft}
Let $\l$ be as in \eqref{def:ansatzlambda} with $\l_1$ satisfying \eqref{bound:l1}, $\l_2(\e)=O(1)$ in terms of the parameter $\e$ and $0<\e<\e_0$. Then, the integral 
\begin{equation*}
\int_0^{1} \varpi_0'(x(1-\e/2)) x^{n} \frac{d(x)}{\lambda+c_L(x,\e)}dx=M[h_0,\lambda_1] + R[h_0,\lambda_1; h_\e,\lambda_2(\e)],
\end{equation*}
where $M[h_0,\lambda_1], R[h_0,\lambda_1;\, h_\e,\lambda_2(\e)] \in \mathbb{R}$ and
\begin{align*}    M[h_0,\lambda_1]&=O(\e\log^2(\e)),\\
    R[h_0,\lambda_1;\, h_\e,\lambda_2(\e)]&=o(\e\log^2(\e)).
\end{align*}
In addition, $R[h_0,\lambda_1;\, h_\e, \lambda_2(\e)]$ is continuous with respect  to $h_\e$ and $\lambda_2(\e)$. Here, all the terms can depend on the parameters \( \varepsilon_0 \)
and on the norms  \( \|x^{-n} h_0\|_{L^\infty} \), \( \|\varpi_0'\|_{W^{1,\infty}} \), \( \|g_1\|_{W^{1,\infty}([1/2,1])} \) and \( \|x^{n}g_1\|_{L^\infty}\) .
\end{lemma}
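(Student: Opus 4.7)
The plan is to mirror the proof of Lemma \ref{l:Dright} almost verbatim, exploiting the structural symmetry between the two problems after the substitutions $x_R^\ast<1 \leftrightarrow x_L^\ast>1$, $\tilde{g}_1\leftrightarrow g_1$ (with $g_1$ from Lemma \ref{g1}), and the weight $x^{-n}$ on $[1,+\infty)$ replaced by $x^{n}$ on $[0,1]$. The left-hand analog of Lemma \ref{bounddenominator} is Lemma \ref{bounddenominatorLeft}, which provides precisely the same $|1-x|$-type and $\e$-uniform control of $(\lambda+c_L(x,\e))^{-1}$ near $x=1^-$; the left-hand analog of Lemma \ref{l:decg1} is already contained in Lemma \ref{g1}, giving $\|x^{n}g_1\|_{L^\infty}<\infty$ and thus providing integrability of $g_1$ against $\varpi_0'$ near $x=0^+$.

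The first step is to isolate the $\lambda_2$-dependence by writing
\[
\frac{1}{\lambda+c_L(x,\e)}=\frac{1}{\lambda_0+\e\lambda_1+c_L(x,\e)}+\cR[\lambda_2(\e)](\e,x),
\]
with
\[
\cR[\lambda_2(\e)](\e,x):=-\frac{\e^{2}\log^{2}(\e)\lambda_2(\e)}{(\lambda_0+\e\lambda_1+c_L(x,\e))(\lambda+c_L(x,\e))},
\]
and splitting the forcing accordingly as $E=E^0+E^{\lambda_2}$, where $E^0$ is $\lambda_2$-free and $E^{\lambda_2}$ carries a factor $\cR$. From Lemma \ref{bounddenominatorLeft} one then deduces the left-hand analogs of the size estimates \eqref{L1R}, \eqref{R1}, \eqref{LinfR} and \eqref{R2}, as well as the left-hand analog of Lemma \ref{estimatesE} applied to both $E$ and $E^{\lambda_2}$ (the latter being smaller by a factor $\e\log^{2}(\e)$ by construction).

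The second step is to use the identity $h_\e=h_0+d$ inside the variation-of-constants representation
\[
d(x)=g_1(x)\int_0^{x} h_0(z)E(z)h_\e(z)\,dz+h_0(x)\int_{x}^{1} g_1(z)E(z)h_\e(z)\,dz,
\]
producing a decomposition $d=d_0^{\lambda_2}+d^{\lambda_2}$, and further $d_0^{\lambda_2}=d^{0}+d^{\cR}$ following the split of $E$. The $M$-term is identified as the contribution of $d^{0}$ paired with $(\lambda_0+\e\lambda_1+c_L(x,\e))^{-1}$: this piece is manifestly independent of both $h_\e$ and $\lambda_2(\e)$, and the estimates on $E^{0}$ together with the left analog of the corollary to Lemma \ref{l:dRbound} produce the size $O(\e\log^{2}(\e))$. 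All remaining pieces, namely those involving $d^{\lambda_2}$, $d^{\cR}$, or the product $\cR\cdot d$, depend continuously on $(h_\e,\lambda_2(\e))$ and, using the previously established bound $\|x^{-n}d\|_{L^\infty}\leq C\e\log(\e^{-1})$ together with the extra $\e\log^{2}(\e)$ gain from $\cR$, each is of size $O(\e^{2}\log^{k}(\e))$ for some $k\in\{3,4\}$, hence $o(\e\log^{2}(\e))$; these constitute the $R$-term.

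The main subtlety, and the only place where the left-hand analysis departs noticeably from the right-hand one, lies in tracking the weight: on the right, decay of $\varpi_0'(x(1+\e/2))x^{-n}$ at infinity provided absolute integrability; on the left, one uses instead that $\varpi_0'$ is smooth near the origin and that $\|x^{n}g_1\|_{L^\infty}<\infty$ by Lemma \ref{g1}, so the weight $x^{n}$ absorbs the potential $x^{-n}$ singularity of $g_1$ at zero and every integral appearing in the decomposition converges absolutely. Continuity of $R$ with respect to $(h_\e,\lambda_2(\e))$ is automatic since each constituent is an integral of continuous functions of these parameters against integrable kernels, so no qualitatively new idea beyond those developed on the right is required.
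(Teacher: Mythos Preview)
Your proposal is correct and matches the paper's approach: the paper itself does not give a proof of this lemma, stating only ``Since the argument follows similar ideas, we state the result without proof,'' and your sketch carries out exactly the mirroring of the right-hand argument that the paper has in mind. Your identification of the one new point---that near $x=0$ the weight $x^{n}$ compensates the potential $x^{-n}$ blow-up of $g_1$ from Lemma~\ref{g1}, replacing the decay-at-infinity argument used on the right---is the correct observation needed to make the translation go through.
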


\section{Derivation of the equation for $\lambda$}
Here, we reduce the problem of finding solutions to \eqref{inteqleft_final}-\eqref{inteqright_final} to a single equation for $\lambda$. 

\subsection{Solving the original system of integral equations.}\label{argument}
We must not forget that our ultimate goal is to solve the coupled system of integral equations formed by \eqref{inteqleft_final} and \eqref{inteqright_final}.

Let us consider $(h_L, h_R)$, a solution of the ODEs \eqref{eqfrob:leftgeneral}-\eqref{eqfrob:rightgeneral} given in Lemma \ref{l:prop}. We will look for  solutions of   \eqref{inteqleft_final} and \eqref{inteqright_final} with the form
\begin{align*}
    H_L(x)=Ah_L(x),\quad H_R(x)=Bh_R(x),
\end{align*}
where $A,B\in \R$ are two free parameters. Here, we recall that, since we are looking for a non-trivial solution, we need to impose that 
\begin{equation}\label{nontrivialAB}
(A, B) \neq (0, 0). 
\end{equation}

Next we define the function $N_L:(0,1)\to\mathbb{R}$ given by
\begin{align*}
    &N_L(x):=A h_L(x)-\frac{(1-\e/2)}{2n}\int_0^{x}\varpi_0'(s(1-\e/2)) \left(\frac{s}{x}\right)^{n}  \frac{ A h_L(s)}{\l+c_L(s,\e)}ds\\ 
&-\frac{(1-\e/2)}{2n}\int_{x}^{1}\varpi_0'(s(1-\e/2)) \left(\frac{s}{x}\right)^{-n}\frac{A h_L(s)}{\l+c_L(s,\e)}ds\\
&-\frac{(1+\e/2)}{2n}\left(\frac{1+\e/2}{1-\e/2}\right)^{-n}\int_{1}^{+\infty}\varpi_0'(s(1+\e/2))\left(\frac{s}{x}\right)^{-n} \frac{B h_R(s)}{\l+c_R(s,\e)}ds,
\end{align*}
and which satisfies the system, given that \(h_L\) and \(h_R\) satisfy \eqref{eqfrob:odeleft} and \eqref{eqfrob:oderight}, respectively,
\begin{align}\label{eq:ODE-NL}
N_L''(x)+\frac{1}{x}N_L'(x)-\frac{n^2}{x^2}N_L(x)=&0,\\
\lim_{x\to 0^+} x^{1-n} N_L(x)=&0.\nonumber
\end{align}

Similarly, we also define the function $N_R:(1,+\infty)\rightarrow \mathbb{R}$ given by
\begin{align*}
&N_R(x):=B h_R(x)-\frac{(1-\e/2)}{2n}\left(\frac{1-\e/2}{1+\e/2}\right)^{n}\int_0^1 \varpi_0'(s(1-\e/2))\left(\frac{s}{x}\right)^{n} \frac{A h_L(s)}{\l+c_L(s,\e)}ds\\
 &-\frac{(1+\e/2)}{2n}\int_1^x \varpi_0'(s(1+\e/2))\left(\frac{s}{x}\right)^{n}\frac{B h_R(s)}{\l+c_R(s,\e)}ds\\
 &-\frac{(1+\e/2)}{2n}\int_x^\infty \varpi_0'(s(1+\e/2)) \left(\frac{s}{x}\right)^{-n} \frac{B h_R(s)}{\l+c_R(s,\e)}ds,
\end{align*}
that satisfies
\begin{align}\label{eq:ODE-NR}
N_R''(x) + \frac{1}{x} N_R'(x) -\frac{n^2}{x^2}N _R(x)=0,\\ 
\lim_{x\to +\infty} x^{n-1}N_R(x)=0\nonumber.
\end{align}

\begin{proof}[Proof of \eqref{eq:ODE-NL}-\eqref{eq:ODE-NR}] It is sufficient to take two derivatives of the equations and use  \eqref{eqfrob:odeleft} and \eqref{eqfrob:oderight}. For the boundary conditions, it suffices to use the behavior of $h_L$ at zero and of $h_R$ at infinity.
\end{proof}

The general solution of $f''(x)+f'(x)/x-(n^2/x^2) f(x)=0$ is given by
\[
c_1 x^n +c_2x^{-n}.
\]
Now, by imposing the boundary conditions, we have
\[
N_L(x)=c_1 x^n, \qquad N_R(x)=c_2 x^{-n}.
\]
Therefore, if we impose $N_L(1^-)=0$ then $N_L(x)=0$ for all $(0,1)$.  Similarly, the same argument applies in the interval $(1,+\infty)$ simply by imposing $N_R(1^+)=0$.

Thus in order to solve \eqref{inteqleft_final}-\eqref{inteqright_final}, we just need to solve 
\begin{equation}\label{cond:NLandNR}
N_L(1^{-})=0=N_R(1^+),    
\end{equation}
for some $\lambda\in\mathbb{R}$.
Taking limits, we find
\begin{multline*}
N_L(1^-)=A h_L(1)-\frac{(1-\e/2)}{2n}\int_0^{1}\varpi_0'(s(1-\e/2)) s^{n}  \frac{ A h_L(s)}{\l+c_L(s,\e)}ds\\
-\frac{(1+\e/2)}{2n}\left(\frac{1+\e/2}{1-\e/2}\right)^{-n}\int_{1}^{+\infty}\varpi_0'(s(1+\e/2))s^{-n} \frac{B h_R(s)}{\l+c_R(s,\e)}ds,
\end{multline*}
\begin{multline*}
N_R(1^+)= Bh_R(1)-\frac{(1-\e/2)}{2n}\left(\frac{1-\e/2}{1+\e/2}\right)^{n}\int_0^1 \varpi_0'(s(1-\e/2))s^n \frac{Ah_L(s)}{\l+c_L(s,\e)}ds\\
-\frac{(1+\e/2)}{2n}\int_1^\infty \varpi_0'(s(1+\e/2)) s^{-n} \frac{Bh_R(s)}{\l+c_R(s,\e)}ds,
\end{multline*}
and consequently, since $h_R(1)=1=h_L(1)$, we can write $\eqref{cond:NLandNR}$ as
\begin{equation}\label{sys:matrix}
\begin{pmatrix}
1+ I_1 & \left(\frac{1-\e/2}{1+\e/2}\right)^n I_2 \\
\left(\frac{1-\varepsilon/2}{1+\varepsilon/2}\right)^n I_1 & 1+ I_2 \\
\end{pmatrix}
\begin{pmatrix}
A \\
B
\end{pmatrix}
=
\begin{pmatrix}
0 \\
0
\end{pmatrix},
\end{equation}
where we are using the notation 
\begin{align*}
I_1 &:= -\frac{(1-\e/2)}{2n}\int_0^1 \varpi_0'(s(1-\e/2)) s^n \frac{h_L(s)}{\lambda+c_L(s,\e)}ds,\\
I_2 &:=- \frac{(1+\e/2)}{2n}\int_{1}^{+\infty}\varpi_0'(s(1+\e/2)) s^{-n}\frac{ h_R(s)}{\lambda+c_R(s,\e)}ds.
\end{align*}

Since we are looking for a non-trivial solution (see \eqref{nontrivialAB}), we need to impose that the matrix determinant vanishes. That is,

\begin{equation}\label{eq:det}
0=\det\begin{pmatrix}
1+ I_1 & \left(\frac{1-\e/2}{1+\e/2}\right)^n I_2 \\
\left(\frac{1-\varepsilon/2}{1+\varepsilon/2}\right)^n I_1 & 1+ I_2 \\
\end{pmatrix}
=1+ I_1 +I_2 + \left[1 - \left(\frac{1-\varepsilon/2}{1+\varepsilon/2}\right)^{2n}\right] I_1 I_2.
\end{equation}

Here, it is important to emphasize that \eqref{eq:det} is just an equation for the unknown $\lambda$. Since we are imposing from the beginning the ansatz $\lambda = \lambda_0 + \lambda_1 \e + \e^2 \log^2(\e) \lambda_2(\e)$ with $\lambda_0 = \frac{\log(2)}{2},$
$$\lambda_1 \in \left(-\frac{\log(2)}{2}, -\frac{1-\log(2)}{2}\right),$$
and $\lambda_2(\e) = O(1)$ in terms of the parameter $\e$, the above is, at the end of the day, an equation for $\lambda_1$ and $\lambda_2(\e)$. We also remark that \eqref{eq:det} depends on $\lambda$ explicitly and implicitly through $h_L$ and $h_R$.

\subsection{The value of $\l_1$}\label{s:fixingl1}
Up to this point, we have shown that solving \eqref{inteqleft_final}--\eqref{inteqright_final} reduces to solving \eqref{eq:det} for \(\lambda\).
 The next step will be to fix the value of $\lambda_1\in (-\frac{\log(2)}{2},-\frac{1-\log(2)}{2})$. Indeed, we impose
\begin{align}\label{eq:l1equation}
1-\frac{1}{2n}\frac{1}{\log(2)-\frac{1}{2}}\log\left(\frac{-\left(\lambda_1+\frac{1-\log(2)}{2}\right)}{\left(\lambda_1+\frac{\log(2)}{2}\right)}\right)+Z_L[\mathring{h}_L]+Z_R[\mathring{h}_R]=0,
\end{align}
where
\begin{align*}
-2nZ_L[\mathring{h}_L]:= &\varpi'_0(1)\int_0^1\frac{x^n-1}{\left(\log(2)-\frac{1}{2}\right)(x-1)}dx+ \varpi'_0(1)\int_0^1  \frac{\lambda_0+c_L(x,0)-\left(\log(2)-\frac{1}{2}\right)(x-1)}{(\lambda_0+c_L(x,0))\left(\log(2)-\frac{1}{2}\right)(x-1)}x^ndx\\
    -& \frac{\varpi'_0(1)}{\log(2)-\frac{1}{2}}\log\left(\log(2)-\frac{1}{2}\right)+\varpi'_0(1)\int_0^1 \frac{\mathring{h}_L(x)-1}{\lambda_0+c_L(x,0)}x^n dx,
    \end{align*}
and
\begin{align*}
  -2n Z_R[\mathring{h}_R]:=& \varpi'_0(1)\int_{1}^2 \frac{x^{-n}-1}{\left(\log(2)-\frac{1}{2}\right)(x-1)}dx+\varpi'_0(1)\int_{2}^{+\infty} \frac{x^{-n}}{\left(\log(2)-\frac{1}{2}\right)(x-1)}dx \\
+&\varpi'_0(1)\int_1^{+\infty}  \frac{\lambda_0+c_R(x,0)-\left(\log(2)-\frac{1}{2}\right)(x-1)}{(\lambda_0+c_R(x,0))\left(\log(2)-\frac{1}{2}\right)(x-1)}x^{-n}dx+\frac{\varpi'_0(1)}{\log(2)-\frac{1}{2}}\log\left(\log(2)-\frac{1}{2}\right)\\
+&\varpi_0'(1)\int_1^{+\infty} \frac{\mathring{h}_R(x)-1}{\lambda_0+c_R(x,0)}x^{-n}dx.
\end{align*}

Notice that the integrals in \(Z_L[\mathring{h}_L]\) and \(Z[\mathring{h}_R]\) are finite. Indeed, although \(\lambda_0 + c_R(x,0)\) and \(\lambda_0 + c_L(x,0)\) behave like \((x-1)^{-1}\) near \(x=1\), the factors \(\mathring{h}_R(x)-1\) and \(\mathring{h}_L(x)-1\) behave like \((x-1)\log|x-1|\), ensuring the integrals converge.

It is easy to see from \eqref{eq:l1equation} that there exists a unique solution $\lambda_1\in (-\frac{\log(2)}{2},-\frac{1-\log(2)}{2})$.

The motivation behind defining $\l_1$ in this specific manner will become evident in what follows.

\subsection{An equation for $\lambda_2(\e)$}
The goal is to get, from equations \eqref{cond:NLandNR} and the choice \eqref{eq:l1equation}, an equation for $\lambda_2(\e)$ of the type 
\[
\lambda_2(\e) = M[\mathring{h}_L, \mathring{h}_R,\lambda_1,\e] + R[\mathring{h}_L,\mathring{h}_R,\lambda_1,  \e; h_R, h_L, \lambda_2(\e)],
\]
where $M$ is $O(1)$ and $R$ is $o(1)$ in $\e$, and continuous in $\lambda_2(\e)$.

Recall that both \(h_R\) and \(h_L\) depend on \(\lambda_2(\varepsilon)\).

\subsubsection{The main term $1+I_1+I_2$}
We focus our attention on the main part of \eqref{eq:det}. That is,
\[
1 + I_1 +  I_2.
\]
We are going to study the above expression in detail. Thus, we have
\begin{align*}
I_1 &= -(1-\e/2)\frac{1}{2n}\int_0^1  \varpi_0'(x(1-\e/2)) x^n \frac{h_L(x)}{\l+c_L(x,\e)}dx,\\
I_2 &=-(1+\e/2)\frac{1}{2n}\int_1^\infty \varpi_0'(x(1+\e/2))x^{-n} \frac{h_R(x)}{\l + c_R(x,\e)}dx.
\end{align*}

We start by studying $I_1$. We have that
\begin{align*}
 -2n\frac{I_1}{1-\e/2}=K_1 + \varpi_0'(1)\int_0^1 \frac{x^n}{\l +c_L(x,\e)} dx ,   
\end{align*}
where 
\begin{align*}
K_1&:=\int_0^1   \frac{\varpi_0'(x(1-\e/2)) h_L(x)-\varpi_0'(1) }{\l +c_L(x,\e)}x^n dx.
\end{align*}

Next, we analyze the term
\begin{align}\label{mainleft}
 \int_0^1 \frac{x^n }{\l +c_L(x,\e)} dx .  
\end{align}

We shall decompose
\begin{align*}
    \frac{1}{\lambda+c_L(x,\e)}=&\frac{1}{\lambda+c_L(x,\e)}-\frac{1}{\left(\log(2)-\frac{1}{2}\right)(x-1)+\e\left(\lambda_1+\frac{1-\log(2)}{2}\right)+\e^2\log^2(\e)\lambda_2(\e) }\\
    +&\frac{1}{\left(\log(2)-\frac{1}{2}\right)(x-1)+\e\left(\lambda_1+\frac{1-\log(2)}{2}\right)+\e^2\log^2(\e)\lambda_2(\e) },
\end{align*}
where we notice that
\begin{align*}
    \left.\pa_x(\lambda_0+c_L(x,0))\right|_{x=1}=&\log(2)-\frac{1}{2},\\
    \pa_\e c_L(x,0)|_{x=1}=&\frac{1-\log(2)}{2}.
\end{align*}
Therefore
\begin{align*}
    &\lambda+c_L(x,\e)-     \left(\left(\log(2)-\frac{1}{2}\right)(x-1)+\e\left(\lambda_1+\frac{1-\log(2)}{2}\right)+\e^2\log^2(\e)\lambda_2(\e)\right)\\
    =& \lambda_0+c_L(x,0)-\left(\log(2)-\frac{1}{2}\right)(x-1)
    +\e\left(\pa_\e c_L(x,0)-\pa_\e c_L(1,0)\right)
    +\e^2 \pa^2_\e c_L(x,\e^*),
\end{align*}
with
\begin{align*}
    \lambda_0+c_L(x,0)-\left(\log(2)-\frac{1}{2}\right)(x-1)=&O((x-1)^2),\\
    \pa_\e c_L(x,0)-\pa_\e c_L(1,0)=& O(x-1).
\end{align*}

In the integral
\begin{align*}
\int_0^1\left(\frac{1}{\lambda+c_L(x,\e)}-\frac{1}{\left(\log(2)-\frac{1}{2}\right)(x-1)+\e\left(\lambda_1+\frac{1-\log(2)}{2}\right)+\e^2\log^2(\e)\lambda_2(\e) }\right)x^{n}dx,
\end{align*}
we must focus on the terms
\begin{align}
&\label{1} \int_0^1  \frac{\lambda_0+c_L(x,0)-\left(\log(2)-\frac{1}{2}\right)(x-1)}{(\lambda+c_L(x,\e))\left(\left(\log(2)-\frac{1}{2}\right)(x-1)+\e\left(\lambda_1+\frac{1-\log(2)}{2}\right)+\e^2\log^2(\e)\lambda_2(\e)\right)}x^ndx\\   
&\label{2}\int_0^1  \frac{\e (\pa_\e c_L(x,0)-\pa_\e c_L(1,0))}{(\lambda+c_L(x,\e))\left(\left(\log(2)-\frac{1}{2}\right)(x-1)+\e\left(\lambda_1+\frac{1-\log(2)}{2}\right)+\e^2\log^2(\e)\lambda_2(\e)\right)}x^ndx\\
&\label{3}\int_0^1  \frac{\e^2\pa^2_\e c_L(x,\e^*)}{(\lambda+c_L(x,\e))\left(\left(\log(2)-\frac{1}{2}\right)(x-1)+\e\left(\lambda_1+\frac{1-\log(2)}{2}\right)+\e^2\log^2(\e)\lambda_2(\e)\right)}x^ndx
\end{align}
From Lemma \ref{bounddenominator}, \eqref{2} can be bounded by
\begin{align*}
    C\e\int_0^1\frac{1}{|x-1|+c\e}dx\leq C \e\log(\e^{-1}). 
\end{align*}
As usual, we obtain this bound for \(\lambda_2(\varepsilon) = O(1)\) and for \(\varepsilon\) sufficiently small relative to $
\frac{\left|\lambda_1 + \frac{1-\log(2)}{2}\right|}{|\lambda_2(\varepsilon)|}.$

The term \eqref{3} can be bounded by
\begin{align*}
C\e^2\int_0^1 \frac{1}{(|x-1|+c\e)^2}dx\leq C\e.
\end{align*}

Finally, for \eqref{1} we proceed as follows,
\begin{align*}
\text{\eqref{1}}= &\int_0^1  \frac{\lambda_0+c_L(x,0)-\left(\log(2)-\frac{1}{2}\right)(x-1)}{(\lambda_0+c_L(x,0))\left(\log(2)-\frac{1}{2}\right)(x-1)}x^ndx\\
+&\int_0^1\left( \frac{\lambda_0+c_L(x,0)-\left(\log(2)-\frac{1}{2}\right)(x-1)}{(\lambda+c_L(x,\e))\left(\left(\log(2)-\frac{1}{2}\right)(x-1)+\e\left(\lambda_1+\frac{1-\log(2)}{2}\right)+\e^2\log^2(\e)\lambda_2(\e)\right)}\right.\\ &\qquad\left.-\frac{\lambda_0+c_L(x,0)-\left(\log(2)-\frac{1}{2}\right)(x-1)}{(\lambda_0+c_L(x,0))\left(\log(2)-\frac{1}{2}\right)(x-1)}\right)x^ndx\\
=&\int_0^1  \frac{\lambda_0+c_L(x,0)-\left(\log(2)-\frac{1}{2}\right)(x-1)}{(\lambda_0+c_L(x,0))\left(\log(2)-\frac{1}{2}\right)(x-1)}x^ndx\\
+& \text{\textit{Remainder}},
\end{align*}
where \textit{Remainder}$=O(\e\log(\e^{-1}))$.
In order to check this last equality, we split 
\begin{align}\label{splitting}
&\frac{1}{(\lambda+c_L(x,\e))\left(\left(\log(2)-\frac{1}{2}\right)(x-1)+\e\left(\lambda_1+\frac{1-\log(2)}{2}\right)+\e^2\log^2(\e)\lambda_2(\e)\right)}
\\-&\frac{1}{(\lambda_0+c_L(x,0))\left(\log(2)-\frac{1}{2}\right)(x-1)}\nonumber\\
=&\left(\frac{1}{\lambda+c_L(x,\e)}-\frac{1}{\lambda_0+c_L(x,0)}\right)\frac{1}{\left(\log(2)-\frac{1}{2}\right)(x-1)+\e\left(\lambda_1+\frac{1-\log(2)}{2}\right)+\e^2\log^2(\e)\lambda_2(\e)}\nonumber\\
+&\frac{1}{\lambda_0+c_L(x,0)}\left(\frac{1}{\left(\log(2)-\frac{1}{2}\right)(x-1)+\e\left(\lambda_1+\frac{1-\log(2)}{2}\right)+\e^2\log^2(\e)\lambda_2(\e)}
-\frac{1}{\left(\log(2)-\frac{1}{2}\right)(x-1)}\right).\nonumber
\end{align}
In addition
\begin{align*}
    \frac{1}{\lambda+c_L(x,\e)}-\frac{1}{\lambda_0+c_L(x,0)}=-\frac{\e(\lambda_1+(\pa_\e c_L)(x,0))+\e^2\log^2(\e)\lambda_2(\e)+\e^2(\pa_\e^2c_L)(x,\e^*)}{(\lambda+c_L(x,\e))(\lambda_0+c_L(x,0)}.
\end{align*}
Then we can see that
\begin{align}\label{q2}
    \left|\frac{1}{\lambda+c_L(x,\e)}-\frac{1}{\lambda_0+c_L(x,0)}\right|\leq \frac{C\e}{|x-1|^2}.
\end{align}
Also
\begin{multline}\label{q1}
    \left|\frac{1}{\left(\log(2)-\frac{1}{2}\right)(x-1)+\e\left(\lambda_1+\frac{1-\log(2)}{2}\right)+\e^2\log^2(\e)\lambda_2(\e)}
-\frac{1}{\left(\log(2)-\frac{1}{2}\right)(x-1)}\right|\\
\leq \frac{C\e}{|x-1|(1-x+c\e)}.
\end{multline}
Using \eqref{q1}, \eqref{q2} and the splitting \eqref{splitting} in \textit{Remainder}, we obtain
\begin{align*}
|\,\textit{Remainder}\,|\leq C\e \int_0^1\frac{1}{1-x+c\e}dx\leq C\e\log(\e^{-1}).
\end{align*}

Thus we have that
\begin{align}\label{derecha1}
\int_0^1 \frac{x^n}{\lambda+c_L(x,\e)}dx=&\int_0^1\frac{x^n}{\left(\log(2)-\frac{1}{2}\right)(x-1)+\e\left(\lambda_1+\frac{1-\log(2)}{2}\right)+\e^2\log^2(\e)\lambda_2(\e)}dx\\
+&\int_0^1  \frac{\lambda_0+c_L(x,0)-\left(\log(2)-\frac{1}{2}\right)(x-1)}{(\lambda_0+c_L(x,0))\left(\log(2)-\frac{1}{2}\right)(x-1)}x^ndx \nonumber \\
+&O(\e\log(\e^{-1})). \nonumber
\end{align}

We now manipulate \eqref{derecha1} further in order to simplify it. We just notice that
\begin{align*}
&\int_0^1\frac{x^n-1}{\left(\log(2)-\frac{1}{2}\right)(x-1)+\e\left(\lambda_1+\frac{1-\log(2)}{2}\right)+\e^2\log^2(\e)\lambda_2(\e)}dx\\
=& \int_0^1\frac{x^n-1}{\left(\log(2)-\frac{1}{2}\right)(x-1)}dx\\
+&\int_0^1\left(\frac{1}{\left(\log(2)-\frac{1}{2}\right)(x-1)+\e\left(\lambda_1+\frac{1-\log(2)}{2}\right)+\e^2\log^2(\e)\lambda_2(\e)}-\frac{1}{\left(\log(2)-\frac{1}{2}\right)(x-1)}\right)(x^n-1)dx\\
=&\int_0^1\frac{x^n-1}{\left(\log(2)-\frac{1}{2}\right)(x-1)}dx +O(\e\log(\e^{-1})).
\end{align*}
Therefore
\begin{align}\label{derecha2}
\int_0^1 \frac{x^n}{\lambda+c_L(x,\e)}dx=&\int_0^1\frac{1}{\left(\log(2)-\frac{1}{2}\right)(x-1)+\e\left(\lambda_1+\frac{1-\log(2)}{2}\right)+\e^2\log^2(\e)\lambda_2(\e)}dx\\
+&\int_0^1\frac{x^n-1}{\left(\log(2)-\frac{1}{2}\right)(x-1)}dx+ \int_0^1  \frac{\lambda_0+c_L(x,0)-\left(\log(2)-\frac{1}{2}\right)(x-1)}{(\lambda_0+c_L(x,0))\left(\log(2)-\frac{1}{2}\right)(x-1)}x^ndx \nonumber \\
+&O(\e\log(\e^{-1})). \nonumber
\end{align}
We can compute explicitly the first integral in the right hand side of \eqref{derecha2}, namely
\begin{align}\label{explicitright}
  &  \int_0^1\frac{1}{\left(\log(2)-\frac{1}{2}\right)(x-1)+\e\left(\lambda_1+\frac{1-\log(2)}{2}\right)+\e^2\log^2(\e)\lambda_2(\e)}dx\\=&
    \frac{1}{\log(2)-\frac{1}{2}}\log\left(\frac{-\e\left(\lambda_1+\frac{1-\log(2)}{2}\right)-\e^2\log^2(\e)\lambda_2(\e)}{\log(2)-\frac{1}{2}-\e\left(\lambda_1+\frac{1-\log(2)}{2}\right)-\e^2\log^2(\e)\lambda_2(\e)}\right)\nonumber\\
    =&
    \frac{1}{\log(2)-\frac{1}{2}}\log\left(-\e\left(\lambda_1+\frac{1-\log(2)}{2}\right)-\e^2\log^2(\e)\lambda_2(\e)\right)\nonumber\\
    -&\frac{1}{\log(2)-\frac{1}{2}}\log\left(\log(2)-\frac{1}{2}\right)+ O(\e).\nonumber
\end{align}

We then get for $I_1$:
\begin{align}\label{mainleft1}
    -2n \frac{I_1}{1-\e/2}=&K_1+\varpi'_0(1)\int_0^1\frac{x^n-1}{\left(\log(2)-\frac{1}{2}\right)(x-1)}dx\\
    +& \varpi'_0(1)\int_0^1  \frac{\lambda_0+c_L(x,0)-\left(\log(2)-\frac{1}{2}\right)(x-1)}{(\lambda_0+c_L(x,0))\left(\log(2)-\frac{1}{2}\right)(x-1)}x^ndx \nonumber\\
    -& \frac{\varpi'_0(1)}{\log(2)-\frac{1}{2}}\log\left(\log(2)-\frac{1}{2}\right) \nonumber\\
    +& \frac{\varpi_0'(1)}{\log(2)-\frac{1}{2}}\log\left(-\e\left(\lambda_1+\frac{1-\log(2)}{2}\right)-\e^2\log^2(\e)\lambda_2(\e)\right)\nonumber\\
    +& o(\e\log^2(\e)).\nonumber
\end{align}

For $I_2$ we have that
\begin{align*}
    -2n\frac{I_2}{1+\e/2}=K_2+\int_1^{+\infty} \frac{x^{-n}}{\lambda+c_R(x,\e)}dx,
\end{align*}
where
\begin{align*}
K_2:=\int_1^{+\infty}  \frac{\varpi_0'(x(1+\e/2))h_R(x)-\varpi_0'(1)}{\l + c_R(x,\e)}x^{-n}dx.
\end{align*}
Proceeding in a similar way as for \eqref{mainleft}, we get
\begin{align}\label{izquierda1}
\int_1^{+\infty}\frac{x^{-n}}{\lambda+c_R(x,\e)}dx=& \int_{1}^{+\infty} \frac{x^{-n}}{\left(\log(2)-\frac{1}{2}\right)(x-1)+\e\left(\lambda_1+\frac{\log(2)}{2}\right)+\e^2\log^2(\e)\l_2(\e)}dx\\
+&\int_1^{+\infty}  \frac{\lambda_0+c_R(x,0)-\left(\log(2)-\frac{1}{2}\right)(x-1)}{(\lambda_0+c_R(x,0))\left(\log(2)-\frac{1}{2}\right)(x-1)}x^{-n}dx\nonumber \\
+&O(\e\log(\e^{-1})).\nonumber
\end{align}

Now we manipulate the first integral in the right hand side of \eqref{izquierda1}. We have that
\begin{align*}
    &\int_{1}^{+\infty} \frac{x^{-n}}{\left(\log(2)-\frac{1}{2}\right)(x-1)+\e\left(\lambda_1+\frac{\log(2)}{2}\right)+\e^2\log^2(\e)\l_2(\e)}dx\\
    =&\int_{1}^2 \frac{x^{-n}}{\left(\log(2)-\frac{1}{2}\right)(x-1)+\e\left(\lambda_1+\frac{\log(2)}{2}\right)+\e^2\log^2(\e)\l_2(\e)}dx\\
    +& \int_{2}^{+\infty} \frac{x^{-n}}{\left(\log(2)-\frac{1}{2}\right)(x-1)+\e\left(\lambda_1+\frac{\log(2)}{2}\right)+\e^2\log^2(\e)\l_2(\e)}dx,
\end{align*}
where
\begin{align*}
    &\int_{2}^{+\infty} \frac{x^{-n}}{\left(\log(2)-\frac{1}{2}\right)(x-1)+\e\left(\lambda_1+\frac{\log(2)}{2}\right)+\e^2\log^2(\e)\l_2(\e)}dx\\
    =&\int_{2}^{+\infty} \frac{x^{-n}}{\left(\log(2)-\frac{1}{2}\right)(x-1)}dx+O(\e),
\end{align*}
and
\begin{align*}
    &\int_{1}^2 \frac{x^{-n}}{\left(\log(2)-\frac{1}{2}\right)(x-1)+\e\left(\lambda_1+\frac{\log(2)}{2}\right)+\e^2\log^2(\e)\l_2(\e)}dx\\
    =&\int_{1}^2 \frac{x^{-n}-1}{\left(\log(2)-\frac{1}{2}\right)(x-1)+\e\left(\lambda_1+\frac{\log(2)}{2}\right)+\e^2\log^2(\e)\l_2(\e)}dx\\
    +&\int_{1}^2 \frac{1}{\left(\log(2)-\frac{1}{2}\right)(x-1)+\e\left(\lambda_1+\frac{\log(2)}{2}\right)+\e^2\log^2(\e)\l_2(\e)}dx.
\end{align*}
Proceeding as in the left hand side we find that
\begin{align*}
    &\int_{1}^2 \frac{x^{-n}-1}{\left(\log(2)-\frac{1}{2}\right)(x-1)+\e\left(\lambda_1+\frac{\log(2)}{2}\right)+\e^2\log^2(\e)\l_2(\e)}dx\\
    =&\int_{1}^2 \frac{x^{-n}-1}{\left(\log(2)-\frac{1}{2}\right)(x-1)}dx+O(\e\log(\e^{-1})).
\end{align*}
Therefore
\begin{align}\label{izquierda2}
\int_1^{+\infty} \frac{x^{-n}}{\lambda+c_R(x,\e)}dx=&\int_{1}^2 \frac{1}{\left(\log(2)-\frac{1}{2}\right)(x-1)+\e\left(\lambda_1+\frac{\log(2)}{2}\right)+\e^2\log^2(\e)\l_2(\e)}dx\\
+& \int_{1}^2 \frac{x^{-n}-1}{\left(\log(2)-\frac{1}{2}\right)(x-1)}dx+\int_{2}^{+\infty} \frac{x^{-n}}{\left(\log(2)-\frac{1}{2}\right)(x-1)}dx \nonumber\\
+&\int_1^{+\infty}  \frac{\lambda_0+c_R(x,0)-\left(\log(2)-\frac{1}{2}\right)(x-1)}{(\lambda_0+c_R(x,0))\left(\log(2)-\frac{1}{2}\right)(x-1)}x^{-n}dx\nonumber\\
+&O(\e\log(\e^{-1})).\nonumber
\end{align}
Again we can compute explicitly
\begin{align}\label{explicitleft}
    &\int_{1}^2 \frac{1}{\left(\log(2)-\frac{1}{2}\right)(x-1)+\e\left(\lambda_1+\frac{\log(2)}{2}\right)+\e^2\log^2(\e)\l_2(\e)}dx\\
    =&\frac{1}{\log(2)-\frac{1}{2}}\log\left(\frac{\left(\log(2)-\frac{1}{2}\right)+\e\left(\lambda_1+\frac{\log(2)}{2}\right)+\e^2\log^2(\e)\l_2(\e)}{\e\left(\lambda_1+\frac{\log(2)}{2}\right)+\e^2\log^2(\e)\l_2(\e)}\right)\nonumber\\
    =&\frac{1}{\log(2)-\frac{1}{2}}\log\left(\log(2)-\frac{1}{2}\right)\nonumber\\
    -&\frac{1}{\log(2)-\frac{1}{2}}\log\left(\e\left(\lambda_1+\frac{\log(2)}{2}\right)+\e^2\log^2(\e)\l_2(\e)\right)+ O(\e)\nonumber.
\end{align}

Therefore
\begin{align}\label{mainrigth1}
-2n\frac{I_2}{1+\e/2}=&K_2+\varpi'_0(1)\int_{1}^2 \frac{x^{-n}-1}{\left(\log(2)-\frac{1}{2}\right)(x-1)}dx+\varpi'_0(1)\int_{2}^{+\infty} \frac{x^{-n}}{\left(\log(2)-\frac{1}{2}\right)(x-1)}dx \\
+&\varpi'_0(1)\int_1^{+\infty}  \frac{\lambda_0+c_R(x,0)-\left(\log(2)-\frac{1}{2}\right)(x-1)}{(\lambda_0+c_R(x,0))\left(\log(2)-\frac{1}{2}\right)(x-1)}x^{-n}dx\nonumber\\
+&\frac{\varpi'_0(1)}{\log(2)-\frac{1}{2}}\log\left(\log(2)-\frac{1}{2}\right)
-\frac{\varpi'_0(1)}{\log(2)-\frac{1}{2}}\log\left(\e\left(\lambda_1+\frac{\log(2)}{2}\right)+\e^2\log^2(\e)\l_2(\e)\right)\nonumber\\
+&o(\e\log^2(\e)).\nonumber
\end{align}

Before estimating the remainder terms $K_1$	and $K_2$, we introduce the following upper bound, which will be used in the subsequent analysis:
\begin{align}\label{log2}
\int_0^1 \frac{\log(|x-1|^{-1})}{1-x+\e}dx\leq C\log^2(\e).
\end{align}
\begin{proof}[Proof of \eqref{log2}]
\begin{align*}
\int_0^1\frac{-\log(1-x)}{1-x+\e}dx&=\int_0^\e \frac{-\log(x)}{x+\e}dx +\int_\e^1 \frac{-\log(x)}{x+\e}dx\\
&\leq -\frac{1}{\e}\int_0^\e\log(x)dx-\log(\e)\int_\e^1\frac{1}{x+\e}dx\leq C\log^2(\e).
\end{align*}
\end{proof}

For $K_1$ we have that
\begin{align*}
K_1&=\int_0^1\varpi_0'(x(1-\e/2))\frac{h_L(x)-\mathring{h}_L(x)}{\lambda+c_L(x,\e)}x^{n}dx+\int_0^1\frac{\varpi_0'(x(1-\e/2))\mathring{h}_L(x)-\varpi'_0(1)}{\lambda+c_L(x,\e)}x^{n}dx\\
 &=: K_{11}+K_{12}.
 \end{align*}
From  Lemma \ref{l:Dleft} we have that 
\begin{align*}
    K_{11}=M[\mathring{h}_L,\lambda_1]+R[\mathring{h}_L,\lambda_1; h_L, \lambda_2(\e)]. 
\end{align*}
In addition, we can estimate $K_{12}$ as follows
\begin{align*}
    K_{12}=&\int_0^1\frac{\varpi'_0(x(1-\e/2))-\varpi'_0(1)}{\lambda+c_L(x,\e)}\mathring{h}_L(x)x^{n}dx+\varpi'_0(1)\int_{0}^1\frac{\mathring{h}_L(x)-1}{\lambda+c_L(x,\e)}x^{n}dx\\
    =& O(\e\log(\e^{-1}))+\varpi'_0(1)\int_{0}^1\frac{\mathring{h}_L(x)-1}{\lambda+c_L(x,\e)}x^{n}dx.
\end{align*}
Also,
\begin{align*}
    \int_{0}^1\frac{\mathring{h}_L(x)-1}{\lambda+c_L(x,\e)}x^{n}dx=&\int_0^1(\mathring{h}_L(x)-1)\left(\frac{1}{\lambda+c_L(x,\e)}-\frac{1}{\lambda_0+c_L(x,0)+\e\left(\lambda_1+\pa_\e c_L(x,0)\right)}\right)x^ndx\\
    +&\int_{0}^1\frac{\mathring{h}_L(x)-1}{\lambda_0+c_L(x,0)+\e\left(\lambda_1+\pa_\e c_L(x,0)\right)}x^{n}dx.
\end{align*}
We will use, from Lemma \ref{l:hlring}, that $|\mathring{h}_L(x)-1|=O(|x-1|\log(|x-1|^{-1})$, to obtain
\begin{align*}
&\left|\int_0^1(\mathring{h}_L(x)-1)\left(\frac{1}{\lambda+c_L(x,\e)}-\frac{1}{\lambda_0+c_L(x,0)+\e\left(\lambda_1+\pa_\e c_L(x,0)\right)}\right)x^ndx\right|\\ 
    &\leq C\e^2\log^2(\e)\int_0^1\frac{\log(|x-1|^{-1})}{1-x+c\e}x^n\leq C\e^2\log^4(\e)=o(\e\log^2(\e)),
\end{align*}
where in the last inequality we have used \eqref{log2},
and
\begin{align*}
&\int_{0}^1\frac{\mathring{h}_L(x)-1}{\lambda_0+c_L(x,0)+\e\left(\lambda_1+\pa_\e c_L(x,0)\right)}x^{n}dx \\ =& 
\int_0^1 \frac{\mathring{h}_L(x)-1}{\lambda_0+c_L(x,0)}x^n dx\\
+&\int_{0}^1(\mathring{h}_L(x)-1)\left(\frac{1}{\lambda_0+c_L(x,0)+\e\left(\lambda_1+\pa_\e c_L(x,0)\right)}-\frac{1}{\lambda_0+c_L(x,0)}\right)x^ndx\\
\leq & \int_0^1 \frac{\mathring{h}_L(x)-1}{\lambda_0+c_L(x,0)}x^n dx+C\e\int_0^1 \frac{\log(|x-1|^{-1})}{1-x+c\e}dx=\int_0^1 \frac{\mathring{h}_L(x)-1}{\lambda_0+c_L(x,0)}x^n dx +O(\e\log^2(\e)).
\end{align*}

Therefore
\begin{align*}
&\int_{0}^1\frac{\mathring{h}_L(x)-1}{\lambda_0+c_L(x,0)+\e\left(\lambda_1+\pa_\e c_L(x,0)\right)}x^{n}dx=\int_0^1 \frac{\mathring{h}_L(x)-1}{\lambda_0+c_L(x,0)}x^n dx+M[\mathring{h}_L,\lambda_1],
\end{align*}
where
\begin{align*}
\int_0^1 \frac{\mathring{h}_L(x)-1}{\lambda_0+c_L(x,0)}x^n dx=O(1).
\end{align*}

We then have that
\begin{align*}
K_{12}=    \int_0^1 \frac{\mathring{h}_L(x)-1}{\lambda_0+c_L(x,0)}x^n dx+M[\mathring{h}_L,\lambda_1]+R[\mathring{h}_L,\lambda_1; h_L, \lambda_2(\e)].
\end{align*}
Thus
\begin{align}\label{finalK1}
    K_1=\varpi'_0(1)\int_0^1 \frac{\mathring{h}_L(x)-1}{\lambda_0+c_L(x,0)}x^n dx+M[\mathring{h}_L,\lambda_1]+R[\mathring{h}_L,\lambda_1; h_L, \lambda_2(\e).
\end{align}

Proceeding in a similar way with $K_2$ we obtain that
\begin{align}\label{finalK2}
K_2=\varpi_0'(1)\int_1^{+\infty} \frac{\mathring{h}_R(x)-1}{\lambda_0+c_R(x,0)}x^{-n}dx+ M[\mathring{h}_R,\lambda_1]+R[\mathring{h}_R,\lambda_1; h_R, \lambda_2(\e)].
\end{align}

We summarize the results \eqref{mainleft1}, \eqref{mainrigth1}, \eqref{finalK1} and \eqref{finalK2} in the following lemma.
\begin{lemma}\label{I1andI2}The following equalities hold
\begin{align*}
-2nI_1=&\frac{\varpi_0'(1)}{\log(2)-\frac{1}{2}}\log\left(-\e\left(\lambda_1+\frac{1-\log(2)}{2}\right)-\e^2\log^2(\e)\lambda_2(\e)\right)\\
    -&2n Z_L[\mathring{h}_L]+M[\mathring{h}_L,\lambda_1]+R[\mathring{h}_L,\lambda_1; h_L, \lambda_2(\e)],\end{align*}
    where
    \begin{align*}
    -2nZ_L[\mathring{h}_L]:= &\varpi'_0(1)\int_0^1\frac{x^n-1}{\left(\log(2)-\frac{1}{2}\right)(x-1)}dx+ \varpi'_0(1)\int_0^1  \frac{\lambda_0+c_L(x,0)-\left(\log(2)-\frac{1}{2}\right)(x-1)}{(\lambda_0+c_L(x,0))\left(\log(2)-\frac{1}{2}\right)(x-1)}x^ndx\\
    -& \frac{\varpi'_0(1)}{\log(2)-\frac{1}{2}}\log\left(\log(2)-\frac{1}{2}\right)+\varpi'_0(1)\int_0^1 \frac{\mathring{h}_L(x)-1}{\lambda_0+c_L(x,0)}x^n dx.\\
    \end{align*}
    
    \begin{align*}
    -2nI_2=-&\frac{\varpi'_0(1)}{\log(2)-\frac{1}{2}}\log\left(\e\left(\lambda_1+\frac{\log(2)}{2}\right)+\e^2\log^2(\e)\l_2(\e)\right)\\
-&2n Z_R[\mathring{h}_R]+M[\mathring{h}_R,\lambda_1]+R[\mathring{h}_R,\lambda_1; h_R, \lambda_2(\e)],
\end{align*}
where
\begin{align*}
  -2n Z_R[\mathring{h}_R] =& \varpi'_0(1)\int_{1}^2 \frac{x^{-n}-1}{\left(\log(2)-\frac{1}{2}\right)(x-1)}dx+\varpi'_0(1)\int_{2}^{+\infty} \frac{x^{-n}}{\left(\log(2)-\frac{1}{2}\right)(x-1)}dx \\
+&\varpi'_0(1)\int_1^{+\infty}  \frac{\lambda_0+c_R(x,0)-\left(\log(2)-\frac{1}{2}\right)(x-1)}{(\lambda_0+c_R(x,0))\left(\log(2)-\frac{1}{2}\right)(x-1)}x^{-n}dx+\frac{\varpi'_0(1)}{\log(2)-\frac{1}{2}}\log\left(\log(2)-\frac{1}{2}\right)\\
+&\varpi_0'(1)\int_1^{+\infty} \frac{\mathring{h}_R(x)-1}{\lambda_0+c_R(x,0)}x^{-n}dx.\\
\end{align*}
\end{lemma}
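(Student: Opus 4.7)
The plan is to combine the asymptotic expansions for $I_1$ and $I_2$ derived in \eqref{mainleft1} and \eqref{mainrigth1} with the bounds for the remainder terms $K_1$ and $K_2$ obtained in \eqref{finalK1}-\eqref{finalK2}, and then organize the result along the $M/R/Z$ decomposition introduced earlier. All the hard analytic work has already been done: what remains is a bookkeeping exercise that identifies which pieces are explicit ($Z_L, Z_R$), which pieces depend continuously on $(h_L, h_R, \lambda_2(\e))$ with only continuous dependence and with size $o(\e\log^2(\e))$ (the $R$-terms), and which pieces depend only on $(\mathring{h}_L, \mathring{h}_R, \lambda_1)$ with size $O(\e\log^2(\e))$ (the $M$-terms).

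First I would insert \eqref{finalK1} into \eqref{mainleft1}: the integral $\varpi_0'(1)\int_0^1 \frac{\mathring{h}_L(x)-1}{\lambda_0+c_L(x,0)}x^n dx$ coming from $K_1$ combines with the four explicit terms on lines 2-4 of \eqref{mainleft1} to form precisely $-2nZ_L[\mathring{h}_L]$ (as defined in \eqref{eq:l1equation}); the logarithmic term on line 5 of \eqref{mainleft1} is kept as is; and the $o(\e\log^2(\e))$ error, together with the $M[\mathring{h}_L,\lambda_1]+R[\mathring{h}_L,\lambda_1;h_L,\lambda_2(\e)]$ produced by $K_1$, is absorbed into new $M$- and $R$-terms. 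The remaining factor $(1-\e/2)$ multiplying the entire expression for $I_1$ produces corrections of order $\e$ times any of the explicit terms; these are bounded and contribute either to $M[\mathring{h}_L,\lambda_1]$ (when depending only on $\mathring{h}_L$ and $\lambda_1$) or to $R[\mathring{h}_L,\lambda_1;h_L,\lambda_2(\e)]$ (otherwise). In particular, the $\e$-correction hitting the $\log$ term yields an $O(\e\log(\e^{-1}))$ contribution that is absorbed into the $M$-term.

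The computation for $I_2$ is entirely parallel: I substitute \eqref{finalK2} into \eqref{mainrigth1}, combine the four explicit integrals and the $\mathring{h}_R$-integral to form $-2nZ_R[\mathring{h}_R]$, keep the logarithmic term, and absorb everything else into $M[\mathring{h}_R,\lambda_1]$ and $R[\mathring{h}_R,\lambda_1;h_R,\lambda_2(\e)]$. The $(1+\e/2)$ prefactor is handled as before. The only nontrivial point to verify is that the $R$-terms genuinely depend continuously on $(h_L,\lambda_2(\e))$ respectively $(h_R,\lambda_2(\e))$, which is guaranteed by the corresponding clause in Lemmas \ref{l:Dleft} and \ref{l:Dright} together with Remark \ref{continuidadh} on the continuity of $h_R$ (and analogously $h_L$) with respect to $\lambda_2(\e)$.

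I do not expect a serious obstacle: the main analytic difficulties have been resolved in the derivation of \eqref{mainleft1}, \eqref{mainrigth1}, \eqref{finalK1} and \eqref{finalK2}. The only mild care needed is ensuring that the splitting of leftover terms into $M$ and $R$ respects their defining properties, namely that $M$-terms depend only on $(\mathring{h}_L,\mathring{h}_R,\lambda_1)$ while $R$-terms vary continuously in $(h_L,h_R,\lambda_2(\e))$ and are $o(\e\log^2(\e))$. This is automatic given how $M$ and $R$ were defined in the proofs of Lemmas \ref{l:Dleft} and \ref{l:Dright}, so the proof consists of assembling these pieces and reading off the final identity.
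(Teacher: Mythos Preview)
Your proposal is correct and follows essentially the same approach as the paper: combine \eqref{mainleft1} with \eqref{finalK1} (and \eqref{mainrigth1} with \eqref{finalK2}), identify the $Z_L$, $Z_R$ blocks, and absorb the $(1\mp\e/2)$ prefactor into the remainder terms by noting the leading order of $-2nI_1/(1-\e/2)$ is $O(\log(\e^{-1}))$, so the correction is $O(\e\log(\e^{-1}))=o(\e\log^2(\e))$. One small correction: the $\e$-correction hitting the logarithmic term still depends on $\lambda_2(\e)$ (through the argument of the $\log$), so by the paper's convention it belongs in the $R$-bucket, not the $M$-bucket; this does not affect the argument since the term is $o(\e\log^2(\e))$ and continuous in $\lambda_2(\e)$ anyway.
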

\begin{proof}We just notice that the leading order term for $K_1$ in \eqref{finalK1} is $O(1)$ and, then, the leading order term for $-2nI_1(1-\e/2)^{-1}$ in \eqref{mainleft1} is $O(\log(\e^{-1}))$. Therefore
\begin{align*}
    -2nI_1=-2n\frac{I_1}{1-\e/2}+2n \e/2\frac{I_1}{1-\e/2}=-2n\frac{I_1}{1-\e/2}+o(\e\log^2(\e)),
\end{align*}
and the same estimates \eqref{mainleft1} and \eqref{finalK1} for $-2nI_1(1-\e/2)^{-1}$ are valid for $-2nI_1$. The same argument holds for $-2nI_2$.
\end{proof}

Lemma \ref{I1andI2} implies that
\begin{align*}
-2n(I_1+I_2)= &\frac{\varpi'_0(1)}{\log(2)-\frac{1}{2}}\log\left(\frac{-\left(\lambda_1+\frac{1-\log(2)}{2}\right)-\e\log^2(\e)\lambda_2(\e)}{\left(\lambda_1+\frac{\log(2)}{2}\right)+\e\log^2(\e)\l_2(\e)}\right)\\-&2nZ_L[\mathring{h}_L]-2nZ_R[\mathring{h}_R]+M[\mathring{h}_{L,R},\lambda_1]
+R[\mathring{h}_{L,R},\lambda_1;h_{R,L},\lambda_2(\e)].
\end{align*}

In addition, direct computations give us that
\begin{align*}
&\log\left(\frac{-\left(\lambda_1+\frac{1-\log(2)}{2}\right)-\e\log^2(\e)\lambda_2(\e)}{\left(\lambda_1+\frac{\log(2)}{2}\right)+\e\log^2(\e)\l_2(\e)}\right)\\=&
\log\left(\frac{-\left(\lambda_1+\frac{1-\log(2)}{2}\right)}{\left(\lambda_1+\frac{\log(2)}{2}\right)}\right)+
\frac{1}{\left(\l_1 +\frac{1-\log(2)}{2}\right)\left(\l_1 + \frac{\log(2)}{2}\right)}\e\log^2(\e)\l_2(\e)+o(\e\log^2(\e 
)).
\end{align*}
Therefore, using the definition of $\lambda_1$ in \eqref{eq:l1equation}, we get
\begin{align*}
   1+I_1+I_2=&  -\frac{1}{2n}\frac{\varpi'_0(1)}{\log(2)-\frac{1}{2}}\frac{1}{\left(\l_1 +\frac{1-\log(2)}{2}\right)\left(\l_1 + \frac{\log(2)}{2}\right)}\e\log^2(\e)\l_2(\e)\\
&+M[\mathring{h}_{L,R},\lambda_1]
+R[\mathring{h}_{L,R},\lambda_1;h_{R,L},\lambda_2(\e)].
\end{align*}

\subsubsection{The term $I_1 I_2$}

From Lemma \ref{I1andI2} we have that 
\begin{align*}
I_1=I^0_1+O(1),
\end{align*}
where
$$I_1^0=\frac{1}{2n}\frac{\varpi_0'(1)}{\log(2)-\frac{1}{2}}\log(\e^{-1})=O(\log(\e^{-1})),$$
and it does not depend on $\lambda_2(\e)$ in any way. The same holds for $I_2$. Therefore
\begin{align*}
I_1\cdot I_2= I_1^0\cdot I^0_2+ O(\log(\e^{-1})).
\end{align*}
Since $1-(1-\e/2)^{2n}/(1+\e/2)^{2n}=O(\e)$ we find that
\begin{align*}
\left(1-\frac{(1-\e/2)^{2n}}{(1+\e/2)^{2n}}\right)I_1I_2=M[\mathring{h}_{L,R},\lambda_1]+R[\mathring{h}_{L,R},\lambda_1; h_{L,R},\lambda_2(\e)].
\end{align*}

We now summarize the content of this section in the following result.

\begin{lemma}\label{eqlambda2}The following equality holds
\begin{align*}
&1+I_1+I_2+\left(1-\frac{(1-\e/2)^{2n}}{(1+\e/2)^{2n}}\right)I_1I_2=\\&-\frac{1}{2n}\frac{\varpi'_0(1)}{\log(2)-\frac{1}{2}}\frac{\e\log^2(\e)\l_2(\e)}{\left(\l_1 +\frac{1-\log(2)}{2}\right)\left(\l_1 + \frac{\log(2)}{2}\right)}+M[\mathring{h}_{L,R},\lambda_1]
+R[\mathring{h}_{L,R},\lambda_1;h_{L,R},\lambda_2(\e)].
\end{align*}
\end{lemma}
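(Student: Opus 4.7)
The plan is to combine the two asymptotic expansions for $I_1$ and $I_2$ provided by Lemma \ref{I1andI2} with the explicit choice of $\lambda_1$ fixed by equation \eqref{eq:l1equation}. Adding the expressions for $-2nI_1$ and $-2nI_2$ given in Lemma \ref{I1andI2}, the two separate logarithms combine into
\[
\frac{\varpi_0'(1)}{\log(2)-\tfrac{1}{2}}\log\!\left(
\frac{-\left(\lambda_1+\tfrac{1-\log(2)}{2}\right)-\e\log^2(\e)\lambda_2(\e)}
{\left(\lambda_1+\tfrac{\log(2)}{2}\right)+\e\log^2(\e)\lambda_2(\e)}\right),
\]
plus the terms $-2n(Z_L[\mathring{h}_L]+Z_R[\mathring{h}_R])$ and harmless $M$- and $R$-terms. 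Since both $\lambda_1+\tfrac{1-\log(2)}{2}$ and $\lambda_1+\tfrac{\log(2)}{2}$ are nonzero real numbers (by the choice of $\lambda_1$ in \eqref{bound:l1}), I can Taylor-expand this logarithm in the small perturbation $\e\log^2(\e)\lambda_2(\e)$ to obtain
\[
\log\!\left(\frac{-\left(\lambda_1+\tfrac{1-\log(2)}{2}\right)}{\lambda_1+\tfrac{\log(2)}{2}}\right)
+\frac{\e\log^2(\e)\lambda_2(\e)}{\left(\lambda_1+\tfrac{1-\log(2)}{2}\right)\left(\lambda_1+\tfrac{\log(2)}{2}\right)}
+o(\e\log^2(\e)).
\]

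The next step is to exploit the defining equation \eqref{eq:l1equation} for $\lambda_1$: by construction, the $O(1)$ constants (the first logarithm just displayed, divided by $2n(\log(2)-1/2)$, together with $Z_L[\mathring{h}_L]+Z_R[\mathring{h}_R]$ and the constant $1$) add up to exactly zero. Thus after dividing by $-2n$ and adding back the constant $1$, what survives from $1+I_1+I_2$ is precisely the linear-in-$\lambda_2(\e)$ term
\[
-\frac{1}{2n}\frac{\varpi_0'(1)}{\log(2)-\tfrac{1}{2}}\frac{\e\log^2(\e)\lambda_2(\e)}{\left(\lambda_1+\tfrac{1-\log(2)}{2}\right)\left(\lambda_1+\tfrac{\log(2)}{2}\right)},
\]
together with $M[\mathring{h}_{L,R},\lambda_1]$-terms and $R$-terms (both of size $o(\e\log^2(\e))$, the latter continuous in $h_{L,R}$ and $\lambda_2(\e)$).

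It remains to absorb the cross term $\bigl(1-(1-\e/2)^{2n}/(1+\e/2)^{2n}\bigr)I_1I_2$ into the $M$- and $R$-contributions. Since Lemma \ref{I1andI2} shows $I_j=I_j^0+O(1)$ with $I_j^0=O(\log(\e^{-1}))$ independent of $\lambda_2(\e)$, the product $I_1I_2=I_1^0 I_2^0 + O(\log(\e^{-1}))$, where the first summand depends only on $\lambda_1$ and $\mathring h_{L,R}$. Using $1-(1-\e/2)^{2n}/(1+\e/2)^{2n}=2n\e+O(\e^2)$ produces an overall factor of order $\e\log^2(\e^{-1})=o(\e\log^2(\e))$, with the $\lambda_2(\e)$-dependence appearing only through the $O(\log(\e^{-1}))$ remainder and therefore contributing purely to $R$-terms. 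Combining all contributions yields the claimed identity. The one technical point to be careful about is to ensure that the linearization of the logarithm, the smallness of the cross term, and the $R$-terms inherited from Lemma \ref{I1andI2} are all genuinely $o(\e\log^2(\e))$ and continuous in $\lambda_2(\e)$; this requires only the bounds already established in Lemmas \ref{bounddenominator}--\ref{bounddenominatorLeft}, \ref{l:Dright} and \ref{l:Dleft}, together with $|\lambda_2(\e)|\le M$ and $\e$ chosen small enough in terms of $M$ and $\lambda_1$.
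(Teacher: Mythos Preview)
Your approach mirrors the paper's exactly: combine the two expansions from Lemma \ref{I1andI2}, Taylor-expand the logarithm of the ratio, invoke \eqref{eq:l1equation} to kill the $O(1)$ constants, and absorb the $I_1I_2$ cross term into the $M$/$R$ structure.

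Two slips to fix. First, in the parenthetical ``(both of size $o(\e\log^2(\e))\dots$)'' you misstate the convention: $M$-terms are $O(\e\log^2(\e))$, not $o(\e\log^2(\e))$; only $R$-terms are little-$o$. Second, in your cross-term analysis you write ``$\e\log^2(\e^{-1})=o(\e\log^2(\e))$'', but $\log^2(\e^{-1})=\log^2(\e)$, so these are equal. The leading piece $(2n\e+O(\e^2))\cdot I_1^0I_2^0$ is genuinely of size $O(\e\log^2(\e))$ --- it is an $M$-term, not a negligible one. What saves you (and what you correctly observe in the next clause) is that this leading piece is $\lambda_2$-independent, so it goes into $M$; the $\lambda_2$-dependent remainder is only $O(\e\log(\e^{-1}))=o(\e\log^2(\e))$ and goes into $R$. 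With these two corrections your argument is complete and matches the paper.
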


\subsection{Completion of the proof using a Fixed Point argument}\label{fixpoint}
By Lemma \ref{eqlambda2}, we have
\[
\lambda_2(\varepsilon) = M[\mathring{h}_L, \mathring{h}_R,\lambda_1] + R[\mathring{h}_L, \mathring{h}_R,\lambda_1; h_{L}[\lambda_2(\varepsilon)], h_{R}[\lambda_2(\varepsilon)], \lambda_2(\varepsilon)],
\]
where \(|M[\mathring{h}_L,\mathring{h}_R,\lambda_1]|\leq M-1\), for some constant $M>1$ independent of  $\e$. Consider the unit ball 
\[
B_1\equiv \{ \mu : |\mu -M[\mathring{h}_{L}, \mathring{h}_{R},\lambda_1]| < 1 \}.
\]
This provides an a priori bound for \(\mu\). Indeed, $|\mu|\leq M$ (here is where we fix the $M$ in \eqref{bound:l2}). 

Now, for any \(\mu \in B_1\), we have \(R[\mathring{h}_L,\mathring{h}_R,\lambda_1; h_L[\mu], h_R[\mu], \mu] = o_M(1)\) as \(\varepsilon \to 0\). In addition, the map
\[
\mathcal{B}(\mu) := M[\mathring{h}_L,\mathring{h}_R,\lambda_1] + R[\mathring{h}_L,\mathring{h}_R,\lambda_1; h_L[\mu], h_R[\mu], \mu]
\]
is continuous in \(\mu\) for \(\mu \in B_1\) and sufficiently small \(\varepsilon\). Moreover, for \(\varepsilon\) small enough, \(\mathcal{B}\) maps \(B_1\) into \(B_{1/2}\), since
\[
|\mathcal{B}(\mu) - M[\mathring{h}_L,\mathring{h}_R,\lambda_1]| \leq o_M(1).
\]

As a consequence, by Brouwer's fixed point theorem, we obtain a solution \(\lambda_2(\varepsilon)\) to equation \eqref{eq:det}.

\section{Proof of main theorem}
For $\e_0$ small enough, set $0<\e<\e_0$ and let $\lambda_2(\e)$ be the solution found in Section \ref{fixpoint} and $\lambda=\lambda_0+\e\lambda_1+\e^2\log^2(\e)\lambda_2(\e)$, with $\lambda_1\in (-\log(2)/2, -(1-\log(2))/2)$ given by \eqref{eq:l1equation}. Take  $(h_L,h_R)$ the solutions of \eqref{eqfrob:leftgeneral}-\eqref{eqfrob:rightgeneral}, given by Lemma \ref{l:prop},  for that $\lambda$. We know that there exist $(A,B)\neq (0,0)$ such that $H_L=A h_L$ and $H_R=B h_R$ solve the integral equations \eqref{inteqleft_final} and \eqref{inteqright_final}. This is the content of Section \ref{argument}. Therefore
\begin{align*}
f_L^\ast(x)=&\frac{H_L(x)}{2nx(1-\e/2)(\l + c_L(x,\e))}, \qquad x\in [0,1],\\
f_R^\ast(x)=&\frac{H_R(x)}{2nx(1+\e/2)(\l + c_R(x,\e))}, \qquad x\in[1,+\infty),
\end{align*}
solve \eqref{inteq:fLrescaled}-\eqref{inteq:fRrescaled}. Notice that, both $f_R^\ast$ and $f_L^\ast$ are smooth. Then, the rescaled version $f_L$ and $f_R$ satisfy \eqref{inteq:fL}-\eqref{inteq:fR} as we want. 

\section*{Appendix}\label{appendix:levelsets}

This appendix contains the computations leading to the equations that appear in Section \ref{s:maineq}. While the steps involved are elementary, they have been omitted from the body of the article to improve readability. We include them here for completeness and for the benefit of readers interested in verifying the derivation.\\

Firstly, we will compute the velocities, 
\begin{align*}
&u_r[\pa_r\varpi(r)h_n(r)\cos(n\theta)](r,\theta)\\&=\frac{\mathbf{e}_r}{2\pi}\cdot\int_{0}^\infty\int_{-\pi}^\pi\frac{(r\cos(\theta)-s\cos(\beta),\, r\sin(\theta)-s\sin(\beta))^\perp}{r^2+s^2-2rs\cos(\theta-\beta)} \pa_s\varpi(s)h_n(s)\cos(n\beta)s\, d\beta ds,\\
&u_\theta[\varpi](r,\theta)\\&= \frac{\mathbf{e}_\theta}{2\pi}\cdot \int_0^\infty\int_{-\pi}^\pi \frac{(r\cos(\theta)-s\cos(\beta),\, r\sin(\theta)-s\cos(\beta))^\perp}{r^2+s^2+2rs\cos(\theta-\beta)} \varpi(s) s\, d\beta ds.
\end{align*}

We find that
\begin{align*}
&u_r[\pa_r\varpi(r)h_n(r)\cos(n\theta)]\\&=-\frac{1}{2\pi}\int_{0}^\infty\int_{-\pi}^\pi \pa_s\varpi(s)\frac{\sin(\theta-\beta)}{r^2+s^2-2rs\cos(\theta-\beta)}h_n(s)\cos(n\beta)s^2 \, d\beta ds
    \\&=-\frac{1}{2\pi}\int_{0}^\infty \pa_s\varpi(s)h_n(s)\int_{-\pi}^\pi\frac{\sin(\beta)}{r^2+s^2-2rs\cos(\beta)}h_n(s)\cos(n(\theta-\beta))d\beta s^2ds\\
    &=-\sin(n\theta)\int_0^\infty \pa_s\varpi(s)h_n(s)K_n(s/r)\frac{s^2}{r^2}ds=-\sin(n\theta)\int_0^\infty \pa_s\varpi(s)h_n(s)K_n(r/s)ds,
\end{align*}
where (see Appendix of \cite{CFMS1})
\begin{equation}\label{residues}
    K_n(r)=\frac{1}{2\pi}\int_\mathbb{T}\frac{\sin(\beta)\sin(n\beta)}{1+r^2-2r\cos(\beta)}d\beta =\begin{cases}
        \frac{1}{2 r}r^{-n}, \qquad r>1,\\
        \frac{1}{2r}r^{+n}, \qquad r\leq 1.
    \end{cases}
\end{equation}

With the angular velocity, we proceed as follows. The full velocity for $\varphi(\mathbf{x})=\varpi(r)$ is given by
\begin{align*}
\mathbf{u}[\varphi](\mathbf{x})=\frac{1}{4\pi}\int_{\mathbb{R}^2}\log(|\mathbf{x}-\mathbf{y}|^2)\nabla^\perp\varphi(\mathbf{y})d\mathbf{y}.
\end{align*}
In polar coordinates
\begin{align*}
\mathbf{u}[\varpi](r,\theta)=\frac{1}{4\pi}\int_0^\infty\int_{-\pi}^\pi \log(r^2+s^2-2rs\cos(\theta-\beta))\mathbf{e}_\theta(\beta)\partial_s\varpi(s)d\beta sds.
\end{align*}
Thus
\begin{align*}
\frac{u_\theta[\varpi](r,\theta)}{r} =\frac{\mathbf{u}[\varpi](r,\theta) \cdot \mathbf{e}_\theta(\theta)}{r}=\frac{1}{4\pi r}\int_0^\infty \int_{-\pi}^\pi \log(r^2+s^2-2rs\cos(\theta-\beta))\cos(\theta-\beta)\pa_s\varpi(s)d\beta sds.
\end{align*}    
Integrating by parts yields
\begin{align*}
&-\frac{1}{4\pi} \int_\mathbb{T} \cos(\beta) \log(r^2 +s^2 -2r s \cos(\beta)) d\beta =\frac{1}{4\pi }  \int_{\mathbb{T}} \sin(\beta) \pa_\beta  \log(r^2 +s^2 -2r s \cos(\beta)) d\beta \\
&=\frac{r}{2\pi}\int_\mathbb{T}\frac{ s\sin^2(\beta)}{r^2+s^2 -2r s \cos(\beta)}d \beta,
\end{align*}
thus
\begin{align*}
\frac{u_\theta[\varpi](r,\theta)}{r}=&-\int_0^\infty \pa_s\varpi(s)s^2\frac{1}{2\pi}\int_{-\pi}^\pi \frac{\sin^2(\beta)}{r^2+s^2-2rs\cos(\beta)}d\beta ds\\
\\=& -\int_0^\infty \pa_s\varpi(s)\frac{s^2}{r^2}K_1(s/r)ds=-\int_0^\infty \pa_s\varpi(s) K_1(r/s)ds.
\end{align*}

Therefore, inserting \eqref{Wnansatz} in   \eqref{Weq}, we find that
\begin{align*}
    \lambda n\sin(n\theta) h_n(r)+n\sin(n\theta)h_n(r)\int_0^\infty \pa_s\varpi(s)K_1(r/s)ds-\sin(n\theta)\int_0^\infty \pa_s\varpi(s)h_n(s)K_n(r/s)ds=0
\end{align*}
which yields \eqref{Wn}. 
\vspace{0.5 cm}

\textbf{Acknowledgments:} 
AC acknowledges financial support from the 2023 Leonardo Grant for Researchers and Cultural Creators, BBVA Foundation. The BBVA Foundation accepts no responsibility for the opinions, statements, and contents included in the project and/or the results thereof, which are entirely the responsibility of the authors. AC was partially supported by the Severo Ochoa Programme for Centers of Excellence Grant CEX2019-000904-S and CEX-2023-001347-S funded by MCIN/AEI/10.13039/501100011033. AC and DL were partially supported by the MICINN through the grant PID2020-114703GB-I00  and PID2024-158418NB-I00.
DL is supported by RYC2021-030970-I funded by MCIN/AEI/10.13039/501100011033 and the NextGenerationEU. 
DL was partially supported by the MICINN through the grant PID2022-141187NB-I00.\\

The authors thank Joan Mateu for helpful discussions.

\bibliographystyle{abbrv}

\end{document}